\newtheorem{theorem}{Theorem}[section]
\newtheorem{lemma}[theorem]{Lemma}
\newtheorem{corollary}[theorem]{Corollary}
\newtheorem{proposition}[theorem]{Proposition}
\newtheorem{definition}[theorem]{Definition}
\newtheorem{remark}[theorem]{Remark}
\newcommand{\filledbox}{\leavevmode
  \hbox to.77778em{%
  \hfil\vbox to.675em{\hrule width.6em height.6em}\hfil}}
\newcommand{\Rm}{{\mathbb R}}
\begin{document}
\tabulinesep=1.0mm
\title{Scattering of solutions to NLW by Inward Energy Decay\footnote{MSC classes: 35L71, 35L05; This work is supported by National Natural Science Foundation of China Programs 11601374, 11771325}}

\author{Ruipeng Shen\\
Centre for Applied Mathematics\\
Tianjin University\\
Tianjin, China}

\maketitle

\begin{abstract}
  The topic of this paper is a semi-linear, energy sub-critical, defocusing wave equation $\partial_t^2 u - \Delta u = - |u|^{p -1} u$ in the 3-dimensional space 
 ($3\leq p<5$) whose initial data are radial and come with a finite energy. In this work we prove scattering in the positive time direction by only assuming the inward part of the energy decays at a certain rate, as long as the total energy is finite, regardless of the decay rate or size of the outward energy. More precisely, we assume the initial data comes with a finite energy and 
\[
 \int_{\Rm^3} \max\{1,|x|^\kappa\}\left(\left|\nabla u_0(x)\cdot \frac{x}{|x|} + \frac{u_0(x)}{|x|} + u_1(x)\right|^2 + \frac{2}{p+1}|u_0(x)|^{p+1}\right) dx < \infty.
\] 
Here $\kappa\geq \kappa_0(p) = \frac{5-p}{p+1}$ is a constant. If $\kappa>\kappa_0(p)$, we can also prove $\|u\|_{L^p L^{2p}}(\Rm^+ \times \Rm^3)< +\infty$ and give an explicit rate of $u$'s convergence to a free wave.
\end{abstract}

\section{Introduction}

\subsection{Background}

In this work we consider the Cauchy problem of the 3D defocusing semi-linear wave equation 
\[
 \left\{\begin{array}{ll} \partial_t^2 u - \Delta u = - |u|^{p-1}u, & (x,t) \in \Rm^3 \times \Rm; \\
 u(\cdot, 0) = u_0; & \\
 u_t (\cdot,0) = u_1. & \end{array}\right.\quad (CP1)
\]
In this work we choose $p\in [3,5)$. We may consider initial data $(u_0,u_1)$ in two different basic spaces:
\begin{itemize}
 \item The critical Sobolev space $\dot{H}^{s_p}\times \dot{H}^{s_p-1}(\Rm^3)$ with $s_p = 3/2-2/(p-1)$. The importance of this space can be found if one considers the following rescaling: If $u$ is a solution to (CP1), then $u_\lambda (x,t) = \lambda^{-2/(p-1)} u(x/\lambda, t/\lambda)$ is also a solution with an identity 
 \[
  \|(u(\cdot,t_0),\partial_t u(\cdot,t_0)\|_{\dot{H}^{s_p}\times \dot{H}^{s_p-1}(\Rm^3)} 
  = \|(u_\lambda(\cdot,\lambda t_0),\partial_t u_\lambda (\cdot,\lambda t_0)\|_{\dot{H}^{s_p}\times \dot{H}^{s_p-1}(\Rm^3)} 
 \] 
If $p=5$, then the critical Sobolev space is exactly the energy space $\dot{H}^1\times L^2$. Thus $p=5$ is called the energy critical case. In this case the asymptotic behaviour of solution to defocusing equation has been completely understood. Any given solution always resembles a solution to the linear wave equation as $t\rightarrow+\infty$ (i.e. the solution scatters) as long as initial data have a finite $\dot{H}^1 \times L^2$ norm. This result was proved by M. Grillakis \cite{mg1} in 1990's. In this work we focus on the energy-subcritical case. In particular we choose $3\leq p<5$. It is conjectured that any solution with initial data in the critical Sobolev space must scatter. However, nobody can prove this result without additional assumptions, as far as the author knows. Please see \cite{radialcubic, cubic3dwave, shen2} for some already known results regrading global behaviour of solutions with initial data in the critical Sobolev spaces.
 \item The energy space $(\dot{H}^1\cap L^{p+1}) \times L^2 (\Rm^3)$. The advantage of this space is its strong physical meaning. In the energy critical case $p=5$ this happens to be the critical Sobolev space. Solutions with initial data in this space satisfies an energy conservation law:
\[
 E(u, u_t) = \int_{\Rm^3} \left(\frac{1}{2}|\nabla u(\cdot, t)|^2 +\frac{1}{2}|u_t(\cdot, t)|^2 + \frac{1}{p+1}|u(\cdot,t)|^{p+1}\right)\,dx = \hbox{Const}.
\] 
A combination of a fixed-point argument, Strichartz estimates and the energy conservation law gives the global existence of the solution. This global existence of solutions has been known for many years. However, the asymptotic behaviour of solution, i.e. whether the solution always resembles a free wave as $t\rightarrow \infty$, is still unknown, as far as the author knows.
\end{itemize}
The following are some of the known results about the scattering theory of solutions with a finite energy. All of these make additional assumptions on the initial data besides the finiteness of the energy.

\paragraph{Conformal conservation law} The following conformal conservation law was known many years ago.
\[
  \frac{d}{dt} Q(t, u, u_t) = \frac{4(3-p)t}{p+1} \int_{\Rm^3} \left|u(x,t)\right|^{p+1} dx \leq 0.
\]
Here $Q(t,\varphi,\psi) = Q_0(t, \varphi, \psi) + Q_1(t, \varphi)$ is called the conformal charge with
\begin{align*}
 Q_0(t,\varphi,\psi) &= \left\|x\psi + t \nabla \varphi \right\|_{L^2(\Rm^3)}^2 + \left\|(t\psi+2\varphi)\frac{x}{|x|} +|x|\nabla \varphi\right\|_{L^2(\Rm^3)}^2;\\
 Q_1(t, \varphi) &= \frac{2}{p+1}\int_{\Rm^3} (|x|^2+t^2)|\varphi(x,t)|^{p+1} dx.
\end{align*}
This gives us a pretty fast decay rate of $t^{-2}$ for $\int_{\Rm^3} |u(x,t)|dx$ as $t\rightarrow \infty$ thus scattering of the solution as long as $Q(0,u_0,u_1)<\infty$. Roughly speaking, we need to assume
\begin{equation} \label{condition1}
    \int_{\Rm^3} \left[(|x|^2+1) (|\nabla u_0 (x)|^2 + |u_1(x)|^2) + |u_0(x)|^2 \right] dx < \infty.
\end{equation}
More details can be found in \cite{conformal2,conformal}.
\paragraph{Energy Distribution of Radial Solution} The author considered radial solution with a finite energy whose decay rate is much lower as $x\rightarrow \infty$ in recent work \cite{energy dis} and gave the following two theorems. The first one is a scattering theory and also an application of the second one. The second one gives information about energy distribution in the space-time as well as some asymptotic behaviour for all radial solutions to (CP1) with a finite energy.
\begin{theorem} \label{scattering with full energy decay}
Assume $3\leq p<5$. Let $\kappa>\kappa_0(p)=\frac{5-p}{p+1}$ be a constant. If $u$ is a radial solution to (CP1) with initial data $(u_0,u_1)$ so that
 \[
  \int_{\Rm^3} (|x|^\kappa+1)\left(\frac{1}{2}|\nabla u_0|^2 + \frac{1}{2}|u_1|^2+\frac{1}{p+1}|u_0|^{p+1}\right) dx < +\infty,
 \]
 then $u$ must scatter in the space $\dot{H}^1\times L^2$ in both two time directions.
\end{theorem}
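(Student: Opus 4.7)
The plan is to use the radial reduction $w(r,t)=r\,u(r,t)$, which transforms the 3D equation into the one-dimensional half-line problem
\[
w_{tt}-w_{rr} = -r^{1-p}|w|^{p-1}w,\qquad w(0,t)=0,
\]
and then to combine propagation of the weighted energy
\[
E_\kappa(t) = \int_{\Rm^3}(|x|^\kappa+1)\Bigl(\tfrac12|\nabla u|^2+\tfrac12|u_t|^2+\tfrac{1}{p+1}|u|^{p+1}\Bigr)\,dx
\]
with a weighted Morawetz-type identity to produce the scattering norm $\|u\|_{L^p L^{2p}(\Rm^+\times\Rm^3)}<\infty$, from which scattering in $\dot H^1\times L^2$ follows by standard Strichartz theory.

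First I would propagate $E_\kappa(t)$ forward in time. Differentiating and integrating by parts, the only surviving contribution is $-\kappa\int |x|^{\kappa-1}(x/|x|)\cdot\nabla u\,u_t\,dx$; Cauchy--Schwarz combined with the unweighted energy conservation (and, when $\kappa<1$, a Hardy-type inequality) yields $|\tfrac{d}{dt}E_\kappa(t)|\lesssim E_\kappa(t)^{\alpha}E(0)^{1-\alpha}$ with $\alpha<1$, so Gr\"onwall keeps $E_\kappa$ finite on every compact time interval.

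The main step is a weighted Morawetz identity. The natural multiplier is $|x|^\kappa(\partial_r u+u/|x|)$ combined with a lower-order term $|x|^{\kappa-1}u$ chosen to balance the boundary contribution at $r=0$; on the $w$-level this is $r^{\kappa-1}w_r+\beta r^{\kappa-2}w$ for a suitable $\beta$. Integration by parts in space and time produces
\[
c\int_0^T\!\!\int_{\Rm^3}|x|^{\kappa-2}|u|^{p+1}\,dx\,dt\;\le\;C\Bigl(E_\kappa(0)+\sup_{[0,T]}E_\kappa(t)\Bigr),
\]
where the coefficient $c$ is proportional to $\kappa(p+1)-(5-p)$; this is strictly positive exactly when $\kappa>\kappa_0(p)=(5-p)/(p+1)$, so the threshold in the hypothesis appears in a sharp way. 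The propagation step of the previous paragraph bounds the right-hand side uniformly in $T$, which gives a global weighted spacetime bound on the nonlinear term.

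Finally the weighted bound is upgraded to the scattering norm $L^pL^{2p}$. Outside a light cone $|x|\geq t+R$, finite propagation speed reduces $u$ to a free wave whose contribution is already controlled through the $L^{p+1}$ tail of the initial data; inside the cone, a dyadic decomposition in $t$ combined with the radial Strauss-type decay $|x|^{1/2}|u(x,t)|\lesssim\|u(t)\|_{\dot H^1}$ and interpolation against the global weighted nonlinear bound yield $\|u\|_{L^pL^{2p}(\Rm^+\times\Rm^3)}<\infty$. A standard Strichartz/perturbation argument then gives scattering in $\dot H^1\times L^2$ in the positive direction, and the time-reversal symmetry of (CP1) handles the negative direction. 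The main technical obstacle is the Morawetz step: the multiplier must be chosen so that the bulk coefficient in front of the nonlinear spacetime term comes out proportional to $\kappa(p+1)-(5-p)$ while all four sets of boundary contributions ($r=0$, $r=\infty$, $t=0$, $t=T$) are absorbable by $E_\kappa$. Any loss in this balance would raise the critical exponent above $\kappa_0(p)$, so the identity has to be derived with care.
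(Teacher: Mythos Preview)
Your proposal has a genuine gap at the crucial closing step. You write that Gr\"onwall keeps $E_\kappa(t)$ finite on compact time intervals, and then claim this ``bounds the right-hand side uniformly in $T$''. These two statements are incompatible: the differential inequality $|\tfrac{d}{dt}E_\kappa|\lesssim E_\kappa^\alpha E^{1-\alpha}$ with $\alpha<1$ integrates at best to $E_\kappa(t)\lesssim (E_\kappa(0)^{1-\alpha}+Ct)^{1/(1-\alpha)}$, which grows polynomially. Hence $\sup_{[0,T]}E_\kappa(t)$ in your Morawetz inequality is unbounded as $T\to\infty$ and the global spacetime bound never closes. (There is also a difficulty near $r=0$: the flux term $\int |x|^{\kappa-1}\partial_r u\,u_t\,dx$ with $\kappa<1$ is not obviously controlled by $E_\kappa$ and $E$, since $u_t$ is merely $L^2$ and no Hardy inequality applies to it.) A separate minor issue: finite speed of propagation does \emph{not} reduce $u$ to a free wave in $\{|x|\ge t+R\}$; the solution there is still nonlinear, only with data restricted to $\{|x|\ge R\}$.

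The paper avoids the whole propagation problem by replacing the full weighted energy with the \emph{inward} weighted energy, built from $|w_r+w_t|^2$ rather than $|w_r|^2+|w_t|^2$. The inward energy $E_-(t)$ is monotonically decreasing (Proposition~\ref{monotonicity of energies}), so no Gr\"onwall is needed at all. The weighted Morawetz (Proposition~\ref{Weighted Morawetz Inequality}) is derived not from a spatial multiplier but by integrating the inward energy flux identity along characteristics $r+t=s$ against $a'(s)=\kappa s^{\kappa-1}$; the absorption step uses $a'(s)\le \kappa\,a(s)/s$ with $\kappa<(p-1)/2$ and yields the global bound $\iint (r+t)^\kappa r^{-p}|w|^{p+1}\,dr\,dt\lesssim K$ directly in terms of the initial weighted inward energy. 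From this one gets $E_-(t)\lesssim Kt^{-\kappa}$ and local outward energy decay (Proposition~\ref{decay estimate outward}), and the $L^pL^{2p}$ norm is then assembled region by region via the refined pointwise estimate of Lemma~\ref{local upper bound pointwise}. The full-energy hypothesis of Theorem~\ref{scattering with full energy decay} gives the weighted inward bound for both $u(t)$ and $u(-t)$, so scattering follows in both time directions.
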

\begin{theorem} \label{energy distribution}
Assume $3\leq p<5$. Let $u$ be a radial solution to (CP1) with a finite energy $E$. Then there exist a three-dimensional free wave $v^-(x,t)$, with an energy $\tilde{E}_-\leq E$, so that 
 \begin{itemize}
  \item We have scattering outside any backward light cone ($R \in \Rm$)
   \[
     \lim_{t \rightarrow - \infty} \left\|\left(\nabla u(\cdot,t), u_t(\cdot,t)\right)- \left(\nabla v^- (\cdot,t), v_t^- (\cdot,t)\right)\right\|_{L^2(\{x\in \Rm^3:|x|>R+|t|\})} = 0.
   \]
   \item If we have $\tilde{E}_- = E$, then the scattering happens in the whole space in the negative time direction.
    \[
     \lim_{t \rightarrow - \infty} \left\|\left(u(\cdot,t), u_t(\cdot,t)\right)- \left(v^- (\cdot,t), v_t^- (\cdot,t)\right)\right\|_{\dot{H}^1\times L^2(\Rm^3)} = 0.
   \]
   \item If $\tilde{E}_- < E$, the remaining energy (also called ``retarded energy'') can be located: for any constants $c \in (0,1)$ and $\beta<\frac{2(p-2)}{p+1}$ we have
   \[
    \lim_{t \rightarrow - \infty} \int_{c|t|<|x|<|t|-|t|^\beta} \left(\frac{1}{2}|\nabla u(x,t)|^2 + \frac{1}{2}|u_t(x,t)|^2 + \frac{1}{p+1}|u(x,t)|^{p+1}\right) dx = E - \tilde{E}_-.
   \]
 \end{itemize}
 The asymptotic behaviour in the positive time direction is similar.
\end{theorem}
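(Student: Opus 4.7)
The plan is to reduce to a 1D problem via the standard radial substitution $w(r,t) = r u(r,t)$, which transforms the equation into $w_{tt} - w_{rr} = -r^{-(p-1)} |w|^{p-1} w$ on $\{r>0\}$ with $w(0,t)=0$. Passing to characteristic coordinates $\alpha = t+r$, $\beta = t-r$ and setting $G_\pm = w_t \pm w_r$, one computes
\[
 \partial_\alpha G_- \;=\; \partial_\beta G_+ \;=\; -\tfrac{1}{2}\, r^{-(p-1)} |w|^{p-1} w.
\]
The key feature is that along any characteristic staying in $\{r \gtrsim |t|\}$ the weight $r^{-(p-1)}$ is integrable (since $p\geq 3$), so $G_+$ and $G_-$ admit limits along such characteristics as $t\to -\infty$.

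For the first bullet, I would integrate $\partial_\beta G_+$ along the outward characteristic $\{t+r=\alpha_0\}$ backwards from $\beta=0$. Combining the radial Morawetz inequality
\[
 \int_{-\infty}^0 \int_{\Rm^3} \frac{|u|^{p+1}}{|x|}\,dx\,dt \lesssim E
\]
with H\"older against the finite backward mass $\int_{-\infty}^0 r^{-(p-1)}\,d\beta$ on each outward characteristic, and Fubini in characteristic coordinates, one shows $G_+|_{\alpha=\alpha_0,\beta} \to \Phi(\alpha_0)$ as $\beta\to -\infty$ for a.e.\ $\alpha_0$, with $\|\Phi\|_{L^2(\Rm)}^2 \lesssim E$. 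I then define $v^-$ to be the radial 3D free wave whose scattering profile is $\Phi$, namely $r v^-(r,t) = F(t+r) - F(t-r)$ with $2F'=\Phi$, and denote its energy by $\tilde E_-\leq E$. A symmetric argument on $G_-$ (tracing its incoming characteristic back through $t=0$ to the initial data) produces
\[
 \lim_{t\to -\infty}\bigl\|(\nabla u,u_t) - (\nabla v^-,v^-_t)\bigr\|_{L^2(\{r>R+|t|\})} = 0,
\]
which is the first bullet.

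The second bullet follows by energy balancing: conservation for both $u$ and $v^-$, combined with the exterior convergence, shows that $\int_{r<R+|t|} e(u)\,dx \to (E-\tilde E_-) + \int_{r<R+|t|} e(v^-)\,dx$ as $t\to -\infty$. When $\tilde E_-=E$ the residual piece $E-\tilde E_-$ vanishes, and iterating the characteristic construction inward from arbitrarily small $r_0>0$---closing the integrals via the boundary condition $w(0,t)=0$---upgrades exterior convergence to the full $\dot H^1\times L^2(\Rm^3)$ convergence stated.

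The third bullet is the most delicate step and I expect it to be the main obstacle. When $\tilde E_-<E$, I need to localize the residual energy $E-\tilde E_-$ to $\{c|t|<r<|t|-|t|^\beta\}$ for each $c\in(0,1)$ and each $\beta<\tfrac{2(p-2)}{p+1}$. The lower bound $r>c|t|$ is the classical no-concentration-at-origin statement for radial finite-energy solutions, obtainable from a time-averaged virial/Morawetz estimate with the multiplier $r\partial_r+1$. The upper bound $r<|t|-|t|^\beta$ requires pushing the characteristic convergence of the first bullet into the thin strip $|t|-|t|^\beta<r<|t|+R$, where $r^{-(p-1)}\sim |t|^{-(p-1)}$ is not especially small and the $L^{p+1}$-density of $u$ is not a priori $o(1)$. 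A careful H\"older balance against the Morawetz bound, accounting simultaneously for the $|t|^\beta$-width of the strip and the $|t|^{-(p-1)}$ spatial weight, yields a remainder on the $G_\pm$ differences that is $o(1)$ precisely when $\beta<\tfrac{2(p-2)}{p+1}$---exactly the threshold in the statement. Extracting this sharp exponent, most likely via a Morawetz-type estimate uniform in $t$-strips of width $|t|^\beta$, is the technical heart of the argument.
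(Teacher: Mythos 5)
First, a point of context: this paper does not actually prove Theorem \ref{energy distribution}; it is quoted from the author's prior work \cite{energy dis}, and the present paper only restates the supporting machinery (the inward/outward splitting via $w_r\pm w_t$, the energy flux formula of Proposition \ref{energy flux formula}, the triangle law, and the characteristic convergence of Proposition \ref{limits of wr pm wt}). Your overall architecture --- reduce to $w=ru$, pass to characteristic coordinates, integrate the source along backward light cones to build a radiation profile $\Phi$, and reconstruct the free wave from it --- is the same strategy, and your heuristic for the threshold $\beta<\tfrac{2(p-2)}{p+1}$ (strip width $|t|^\beta$ times the squared convergence rate $(s-t)^{-2\frac{p-2}{p+1}}$ along characteristics) identifies exactly the mechanism the cited proof uses.

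There are, however, two genuine gaps. First, your control of $\int |w|^{p}r^{-(p-1)}$ along a single characteristic does not close as written: the space-time Morawetz bound controls $\iint |w|^{p+1}r^{-p}\,dr\,dt$, and after Fubini and H\"older the powers of $r$ do not match what you need (the conjugate factor becomes $\int r\,d\beta$, which diverges). What is actually required is the \emph{characteristic} flux bound $\int |w(s-t,t)|^{p+1}(s-t)^{-(p-1)}\,dt\lesssim E$, i.e.\ the quantity $Q_-^-(s)$ controlled by the energy flux identity and the triangle law; only then does H\"older against $\int r^{-(p-1)}$ give the rate $r^{-\frac{p-2}{p+1}}$ of Proposition \ref{limits of wr pm wt}. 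Relatedly, your opening claim that $G_\pm$ admit limits ``since $r^{-(p-1)}$ is integrable'' is too quick, because $|w|$ may grow like $r^{1/2}$, so integrability of the weight alone does not control the source. Second, and more importantly, the third bullet is the substantive content of the theorem and you explicitly leave it open: you describe the lower bound $r>c|t|$ and the upper bound $r<|t|-|t|^\beta$ as goals and state that extracting the sharp exponent ``is the technical heart of the argument'' without supplying it. One also needs to prove that the limit of the localized energy \emph{exists and equals} $E-\tilde E_-$, which requires combining the vanishing of the outward energy $E_+(t)\to 0$ as $t\to-\infty$, the triangle law to exclude energy from $\{|x|<c|t|\}$, and the $L^2$ convergence of $w_r+w_t$ to $g_-$ in the strip; none of this is carried out. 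As it stands the proposal is a plausible roadmap aligned with the cited proof, not a proof.
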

The idea of Theorem \ref{energy distribution} is the following: We first split the solution into inward wave and outward wave, thus split the energy into inward and outward energies. Then we prove an energy flux formula for inward/outward energy. Finally we are able to consider and understand the distribution property of energies in the space-time as well as the rules of interaction and transformation of inward and outward energies. 
\subsection{Main Results}

In this paper we always consider radial solutions to (CP1) with a finite energy. We combine the idea of inward/outward energies introduced in \cite{energy dis} as mentioned above with a weighted Morawetz estimate to obtain the following new results. 
\begin{itemize}
 \item We prove the scattering of solution in the positive time direction by assuming the inward energy of initial data decays at the same rate as in  the previous work, regardless of the decay rate or size of outward energy. The idea comes from the author's intuition: 
 \begin{itemize}
   \item [(1)] Outward wave moves away from the origin and thus has a tendency to scatter. It seems that the size or decay rate of outward energy will not be an obstacle to scattering.
   \item [(2)] In the defocusing case, inward energy may be transformed into outward energy in two different ways. First, inward wave carrying energy may move through the origin and become outward wave by linear propagation. Second, nonlinear effect transforms inward energy into outward energy everywhere every time. However, outward energy can never be transformed into inward energy.
 \end{itemize}
 \item We can also prove the scattering in the endpoint case of the decay rate $\kappa = \kappa_0(p)$.
 \item If $\kappa>\kappa_0(p)$, we prove that $\|u\|_{L^p L^{2p}}(\Rm^+\times \Rm^3) < +\infty$. This is equivalent to saying that the nonlinear part satisfies $\|-|u|^{p-1}u\|_{L^1 L^2(\Rm^+\times \Rm^3)}<+\infty$. In fact we obtain
 \[
  \left\|-|u|^{p-1} u\right\|_{L^1 L^2 ([t_0,\infty)\times \Rm^3)} \leq C t_0^{-\frac{p+1}{p+3}(\kappa-\kappa_0(p))}, \; t_0 \geq 2.
 \]
 According to the classic Strichartz estimate, this not only gives the scattering in the space $\dot{H}^1 \times L^2$ but also gives an explicit convergence rate. Because we ignore the sign of $u$ when we consider the $L^p L^{2p}$ norm, we claim that in this case the solution $u$ scatters``absolutely''. 
\end{itemize}
Now we give the precise statement of our main theorem. 

\begin{theorem} \label{main 1}
Fix $3\leq p<5$ and $\kappa_0 (p) = \frac{5-p}{p+1}$. Assume that $\kappa \in [\kappa_0(p),1)$ is a constant. Let $u$ be a radial solution to (CP1) with a finite energy $E$ so that 
\[
 K = \int_{\Rm^3} \max\{1,|x|^\kappa\}\left(\left|\nabla u_0(x)\cdot \frac{x}{|x|} + \frac{u_0(x)}{|x|} + u_1(x)\right|^2 + \frac{2}{p+1}|u_0(x)|^{p+1}\right) dx < \infty.
\]
Then we have 
\begin{itemize}
 \item [(a)] The solution $u$ scatters in the positive time direction. More precisely, there exists $(v_0 ,v_1) \in \dot{H}^1 \times L^2(\Rm^3)$, so that 
 \[
  \lim_{t \rightarrow + \infty} \left\|\begin{pmatrix} u(\cdot,t)\\ \partial_t u(\cdot,t)\end{pmatrix} - 
  \mathbf{S}_L (t)\begin{pmatrix}v_0 \\ v_1\end{pmatrix}\right\|_{\dot{H}^1 \times L^2(\Rm^3)} = 0.
 \]
 Here $\mathbf{S}_L (t)$ is the linear wave propagation operator.\\
 \item [(b)] If $\kappa > k_0(p)$, then the solution $u$ has a finite norm $\|u\|_{L^p L^{2p}(\Rm^+ \times \Rm^3)} < +\infty$.  In fact we have the following estimates for $t \geq 2$.
 \begin{align*}
  \|u\|_{L^p L^{2p} ([t,\infty)\times \Rm^3)}^p &\leq C t^{-\frac{p+1}{p+3}(\kappa-\kappa_0(p))}. \\
  \left\|\begin{pmatrix} u(\cdot,t)\\ u_t(\cdot,t)\end{pmatrix} - \mathbf{S}_L (t) \begin{pmatrix} v_0\\ v_1\end{pmatrix}\right\|_{\dot{H}^1\times L^2}
  & \leq C t^{-\frac{p+1}{p+3}(\kappa-\kappa_0(p))}.
 \end{align*}
 Here $C$ is a constant independent of the time $t$.
 \end{itemize}
\end{theorem}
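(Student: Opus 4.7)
The strategy combines the inward/outward energy decomposition from [energy dis] with a weighted Morawetz identity. Writing $w(r,t)=ru(r,t)$ reduces (CP1) to a 1D problem whose Lagrangian splits into characteristic densities $\tfrac{1}{2}(w_t\pm w_r)^2$; the bracketed factor inside $K$ is exactly $r^{-2}(w_r+w_t)^2$ evaluated at $t=0$ (for radial data $w_r+w_t = r(\partial_r u + u/r + u_t)$), so $K$ is a weighted initial inward energy together with the matching potential term. Part (a) will be obtained by proving (i) a uniform-in-$t$ bound on the weighted inward energy and (ii) a weighted Morawetz spacetime estimate, and then combining the two through Strichartz. Part (b) will follow from the same estimates by trading the spare weight $|x|^{\kappa-\kappa_0}$ against a H\"older interpolation with the conserved energy.

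\textbf{Core estimates.} For (i) the inward-energy flux identity of [energy dis] has a sign-definite right-hand side which encodes the intuition of bullet~(2) in the introduction: nonlinearity and passage through the origin convert inward into outward energy but never the reverse. Multiplying this identity by $\max\{1,|x|^\kappa\}$ and using that this weight is non-increasing along inward characteristics $r+t=\mathrm{const}$ for $|x|\ge 1$ should yield
\[
\int_{\Rm^3}\max\{1,|x|^\kappa\}\Bigl(\bigl|\partial_r u+\tfrac{u}{r}+u_t\bigr|^2+\tfrac{2}{p+1}|u|^{p+1}\Bigr)(x,t)\,dx\;\le\;C(K,E),\qquad t\ge 0.
\]
For (ii), testing the equation against a radial multiplier of the form $|x|^\kappa(\partial_r u+u/r)$, smoothly truncated near the origin so as to match the $\max\{1,|x|^\kappa\}$ cutoff, and using (i) to absorb the boundary terms at $t=0$ and $t=T$, should produce
\[
\int_0^\infty\!\!\int_{\Rm^3}|x|^{\kappa-1}\,|u(x,t)|^{p+1}\,dx\,dt\;\le\;C(K,E),
\]
whose critical scaling is reached exactly at $\kappa=\kappa_0(p)$, which explains the threshold appearing in the theorem.

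\textbf{From Morawetz to scattering, and the main obstacle.} With the Morawetz bound in hand, $\|\,|u|^{p-1}u\|_{L^1_tL^2_x([t,\infty)\times\Rm^3)}$ is estimated on tails by H\"older together with $\dot H^1\hookrightarrow L^6$ and energy conservation; Strichartz then delivers $\|u\|_{L^pL^{2p}}<\infty$ and scattering in $\dot H^1\times L^2$, proving (a). For (b) the extra weight $|x|^{\kappa-\kappa_0}$ is traded, via H\"older against the total energy on a ball of radius $\sim t$, for the time factor $t^{-\frac{p+1}{p+3}(\kappa-\kappa_0)}$, after which Strichartz promotes this decay of $\||u|^{p-1}u\|_{L^1L^2}$ into the stated rate of convergence to a free wave. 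The hardest step is the endpoint $\kappa=\kappa_0(p)$: the Morawetz inequality is then scale invariant, so a single global Strichartz application cannot close and no polynomial rate is available. The plan there is to partition $\Rm^+$ into finitely many intervals on each of which the Morawetz tail is small, run a perturbative Strichartz argument around a free wave on each piece, and concatenate. A secondary technical point that must be tracked through both (i) and (ii) is the matching of the weight between the regions $|x|<1$ and $|x|\ge 1$ dictated by the $\max$ in $K$, which is what permits the multiplier in (ii) to be non-singular at the origin.
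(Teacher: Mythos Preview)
Your weighted Morawetz has the wrong weight, and this is the central gap. The paper's estimate is
\[
\int_0^\infty\!\!\int_{\Rm^3}\frac{(|x|+t)^\kappa}{|x|}\,|u|^{p+1}\,dx\,dt\;\lesssim_{p,\kappa}\;K,
\]
with weight $(|x|+t)^\kappa/|x|$, not $|x|^{\kappa-1}$. The $t$-dependence is what drives everything downstream: it yields $E_-(t)\lesssim Kt^{-\kappa}$ via Proposition~\ref{infinite triangle} and, after a further integration-in-time argument (Propositions~\ref{Morawetz energy cylinder}--\ref{decay estimate outward}), the local outward-energy bound $E_+(t;0,r)\lesssim Kt^{1-\kappa}/(t-r)$. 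Your static-weight version carries no time decay and cannot produce these. Relatedly, your claim that the Morawetz becomes scale-invariant at $\kappa=\kappa_0(p)$ is incorrect: the scaling-critical exponent for $\int|x|^{\kappa-1}|u|^{p+1}\,dx\,dt$ is $(5-p)/(p-1)$, not $(5-p)/(p+1)$. The threshold $\kappa_0(p)$ in the theorem does \emph{not} arise from scaling of the Morawetz; it arises because Theorem~\ref{energy distribution} pins any retarded (non-scattering) energy to the zone $\{|x|<t-t^\beta\}$ with $\beta<1-\kappa_0(p)$, and one needs $\kappa\geq\kappa_0(p)$ so that the energy-vacuum region $\{|x|<t-ct^{1-\kappa}\}$ coming from $E_+(t;0,r)\lesssim t^{1-\kappa}/(t-r)$ covers that zone.

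Your route from Morawetz to scattering via Strichartz alone is also not how the paper closes part~(a), and it is unclear it can be made to work. For $\kappa>\kappa_0(p)$ one can indeed get $\|u\|_{L^pL^{2p}}<\infty$, but not by ``H\"older together with $\dot H^1\hookrightarrow L^6$'': for $p>3$ one has $2p>6$ and the Sobolev embedding is insufficient. The paper instead uses the refined radial pointwise bound (Lemma~\ref{local upper bound pointwise})
\[
|u(x,t)|\lesssim |x|^{-4/(p+3)}\,E(t;0,|x|)^{1/(p+3)}\,E_-(t;0,|x|)^{1/(p+3)},
\]
together with a three-region decomposition $|x|>t/4$, $1/4<|x|<t/4$, $|x|<1/4$, the middle region being handled by a further dyadic splitting and the integral estimate $\int_{t_0}^\infty E(t;0,R)\,dt\lesssim KRt_0^{-\kappa}$, and the inner region by local theory plus finite speed of propagation. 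For the endpoint $\kappa=\kappa_0(p)$ the paper does not attempt any Strichartz closure at all; it argues by contradiction with Theorem~\ref{energy distribution}, using in addition the characteristic-line convergence of $w_r-w_t$ to $g_+$ (Proposition~\ref{limits of wr pm wt}) to kill the thin shell $t-ct^{1-\kappa_0}<|x|<t-t^\beta$. Your proposed ``partition $\Rm^+$ and perturb'' scheme would require a smallness of a controlling norm on each piece that your Morawetz does not supply.
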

\begin{remark}
 Since wave equation is time-reversible, if the outward energy decays so that
\[
  \int_{\Rm^3} \max\{1,|x|^\kappa\}\left(\left|\nabla u_0(x)\cdot \frac{x}{|x|} + \frac{u_0(x)}{|x|} - u_1(x)\right|^2 + \frac{2}{p+1}|u_0(x)|^{p+1}\right) dx < \infty,
\]
then the solution must scatter in the negative time direction. 
\end{remark}
\begin{remark}
If we solve (CP1) in the critical Sobolev space $\dot{H}^{s_p} \times \dot{H}^{s_p-1}(\Rm^3)$, then it has been known that the scattering of a solution $u$ in this critical space is equivalent to the finiteness of the norm $\|u\|_{L^{2(p-1)} L^{2(p-1)}([0,T_+)\times \Rm^3)} < +\infty$. Here $[0,T_+)$ is the maximal lifespan of solution $u$ in the positive time direction. In this work we solve (CP1) in the energy space and consider the $L^p L^{2p}$ norm instead. In fact, we give an example of solution $u$ to (CP1) in Appendix, so that $u$ scatters in $\dot{H}^1 \times L^2$, but $\|u\|_{L^{2(p-1)}L^{2(p-1)}(\Rm^+ \times \Rm^3)} = + \infty$. This gives a phenomenon of scattering which can never be covered by the scattering theory in the critical Sobolev space. 
\end{remark}

\subsection{The idea}

In this subsection we give the main idea and sketch the proof of the main theorem. Details can be found in later sections. First of all, we follow the standard procedure to first convert the 3-dimensional wave equation to a one-dimensional one. This helps us to take full advantage of the radial assumption.
 
\paragraph{Transformation to 1D} Let $u$ be a radial solution to (CP1). If we define $w(r,t) = r u(x,t)$, where $|x|=r$, then $w$ satisfies the following one-dimensional wave equation
\[
 w_{tt} - w_{rr} = - \frac{|w|^{p-1}w}{r^{p-1}}.
\]
An integration by parts shows that
\begin{align}
 &2\pi \int_a^b (|w_r(r,t)|^2+|w_t(r,t)|^2) dr \nonumber \\
 &\qquad = 2\pi\left[\int_a^b \left(r^2|u_r(r,t)|^2 + r^2|u_t(r,t)|^2\right)dr +b|u(b,t)|^2 - a|u(a,t)|^2\right]. \label{energy transformation}
\end{align}
Because we have the following limits for any radial $\dot{H}^1(\Rm^3)$ function $f$,
\begin{equation*}
 \lim_{r\rightarrow 0^+} r|f(r)|^2  = \lim_{r\rightarrow +\infty} r|f(r)|^2 = 0.
\end{equation*}
The new solution $w$ has the same kinetic energy as $u$ at any given time. 
\[
 2\pi \int_0^\infty (|w_r(r,t)|^2+|w_t(r,t)|^2) dr = \int_{\Rm^3} \left(\frac{1}{2}|\nabla u(x,t)|^2 + \frac{1}{2}|u_t(x,t)|^2 \right) dx.
\]
A simple calculation shows they also share the same potential energy. In summary, we have
\begin{align*}
 E(w,w_t) \doteq 2\pi \int_0^\infty (|w_r(r,t)|^2+|w_t(r,t)|^2+\frac{2}{p+1}\cdot \frac{|w(r,t)|^{p+1}}{r^{r-1}}) dr = E(u,u_t) = \hbox{Const}.
\end{align*}

\paragraph{Weighted Morawetz} Next we unitize the already-known energy distribution method, but with an additional weight, to obtain the following weighted Morawetz estimates, as long as the inward energy of initial data decays in a certain rate
\begin{align*}
 &\int_0^\infty t^{\kappa}|\xi(t)|^2 dt \leq C K,& &\int_0^\infty \int_0^\infty \frac{(r+t)^\kappa |w(r,t)|^{p+1}}{r^p} drdt \leq C K.&
\end{align*}
Here $\xi$ is an $L^2$ function found in the energy flux formula, see Proposition \ref{energy flux formula}. Given a time interval $(t_1,t_2)$, the value of $\pi \int_{t_1}^{t_2} |\xi(t)|^2 dt$ is exactly the amount of energy transformed from inward energy to outward energy at the origin due to linear propagation during the given time period. $K$ is defined in the main theorem and determined solely by inward wave part of initial data. The letter $C$ represents a constant. We can also rewrite the weighted Morawetz in term of an integral about $u$
\[
 \int_0^\infty \int_{\Rm^3} \frac{(|x|+t)^\kappa |u(x,t)|^{p+1}}{|x|} dx dt\leq C K.
\]
The author would like to mention that the integral of $|u(x,t)|^{p+1}/|x|$ over a region in $\Rm_t\times \Rm_x^3$ actually measures the amount of energy transformed from inward energy to outward energy by nonlinear effect in the given region.

\paragraph{Decay of Energies} The first application of the weighted Morawetz estimates is an explicit decay rate of the total inward energy at time $t$ 
\[
  E_-(t) \leq CK t^{-\kappa}, \quad t>0.
 \]
In addition, if $t>r>0$, we can also give an upper bound of the outward energy in the ball $B(0,r)$ at the time $t$
\[
 E_+(t;0,r) \leq \frac{CKt^{1-\kappa}}{t-r}.
\]
Moreover, we also have an integral estimate on the energy inside a ball of radius $R$. 
\begin{equation} \label{integral energy estimate}
 \int_{t_0}^\infty E(t;0,R) dt \leq C KRt_0^{-\kappa}, \quad t_0>R>0. 
\end{equation}

\paragraph{Proof of Scattering} If $\kappa \geq \kappa_0(p)$, a combination of these upper bounds on energies with the energy distribution property given in Theorem \ref{energy distribution} leads to a contradiction, unless the solution scatters in the positive time direction. This proves part (a) of our main theorem. 

\paragraph{Upper bound on $L^p L^{2p}$ norm} According to the local theory, the $L^p L^{2p}$ norm in a small time interval is always finite. Thus it suffices to  verify the decay rate of $\|u\|_{L^p L^{2p}([t_0,\infty)\times \Rm^3)}$. We split the solution $u$ into three parts 
\[
 u = u\cdot \chi_{|x|>t/4} + u\cdot \chi_{1/4<|x|<t/4} + u\cdot \chi_{|x|<1/4}.
\]
Here $\chi$'s are characteristic functions of the regions indicated. We discuss these three cases one by one.
\begin{itemize}
 \item Large radius case $|x|>t/4$. This is the dominant term, since most energy concentrates around the light cone $|x|=t$. We can find an upper bound of this term by 
 \[
  \|u\chi_{|x|>t/4}\|_{L^p L^{2p}([t_0,\infty)\times \Rm^3)}^p = \int_{t_0}^\infty \left((\sup_{|x|>t/4} |u(x,t)|)^{p-1} \int_{|x|>t/4} |u|^{p+1} dx\right)^{1/2} dt
 \]
Here the integral $\int |u|^{p+1} dx$ is part of the inward energy, thus decays like $t^{-\kappa}$. An upper bound of $\sup u(x,t)$ can be found by the following pointwise estimate given in Lemma \ref{local upper bound pointwise}.
\begin{align*}
  |u(x,t)| &\leq C |x|^{\frac{-4}{p+3}}\left(\int_0^{|x|} |w_r(r,t)|^2 dr\right)^{\frac{1}{p+3}} \left(\int_0^{|x|} \frac{|w(r,t)|^{p+1}}{r^{p-1}} dr\right)^{\frac{1}{p+3}}\\
  &\leq C |x|^{\frac{-4}{p+3}} E(t;0,|x|)^\frac{1}{p+3} E_-(t;0,|x|)^\frac{1}{p+3}.
\end{align*}
 \item Medium radius case $1/4<|x|<t/4$. We can further write the region $\{(x,t): t>t_0, 1/4<|x|<t/4\}$ as a big union of subsets
 \[
  \left\{(x,t): t>t_0, \frac{1}{4}<|x|<\frac{t}{4}\right\} = \bigcup_{j=1}^n \bigcup_{i=0}^\infty \left\{(x,t): \frac{t^{\alpha_{j-1}}}{4}<|x|<\frac{t^{\alpha_j}}{4}, 2^i t_0 < t < 2^{i+1} t_0\right\}.
 \]
Here $0=\alpha_0< \alpha_1<\alpha_2 < \cdots < \alpha_n=1$ are determined by $p$ and $\kappa$ but nothing else. The restriction of $u$ on each subset can be dealt with by a combination of the same pointwise estimate given above and the integral estimate \eqref{integral energy estimate}. We then combine the estimates on these subsets together in a suitable way to verify the decay rate.
 \item Small radius case $|x|<1/4$. In this case we first apply the local theory to show that there exists a small time $T$, so that for any $t>2$ we have 
 \[
  \|u\|_{L^p L^{2p}([t-T,t+T]\times B(0,1/4))} \leq C E(t;0,1)^{1/2}.
 \]
 We observe that $E(t;0,1)$ in the right hand side is not only uniformly bounded, but also satisfies an integral estimate \eqref{integral energy estimate}. As a result, we can break the time interval $[t_0,\infty)$ into small sub-intervals, then combine the local $L^p L^{2p}$ estimates given above, and finally obtain a global-in-time $L^p L^{2p}$ estimate of $u\chi_{|x|<1/4}$.
\end{itemize}

\subsection{The Structure of This Paper}
 
This paper is organized as follows. In section 2 we collect notations, give a useful pointwise estimate and recall a few preliminary results about the inward/outward energy. Next in Section 3 we prove a weighted Morawetz estimate and give decay estimates on the inward/outward energy. Then we prove the main theorem in section 4. The appendix is devoted to a specific example of scattering solution, which can not be covered by the classic scattering theory in the critical Soblev space.
 
\section{Preliminary Results} \label{sec:pre}

\subsection{Notations}

\paragraph{The $\lesssim$ symbol} We use the notation $A \lesssim B$ if there exists a constant $c$, so that the inequality $A \leq c B$ always holds.  In addition, a subscript of the symbol $\lesssim$ indicates that the constant $c$ is determined by the parameter(s) mentioned in the subscript but nothing else. In particular, $\lesssim_1$ means that the constant $c$ is an absolute constant. 

\paragraph{Radial functions} Let $u(x,t)$ be a spatially radial function. By convention $u(r,t)$ represents the value of $u(x,t)$ when $|x| = r$. 

\paragraph{Energies and flux} we continues to use the notation from the author's recent work \cite{energy dis}. The values of all the energies and fluxes below can never exceed the total energy. 
\begin{definition} \label{energies}
Let $u$ be a radial solution to (CP1) and $w=ru$. We define the inward and outward energies
\begin{align*}
E_- (t) & = \pi \int_{0}^{\infty} \left(\left|w_r(r,t)+w_t(r,t)\right|^2+\frac{2}{p+1}\cdot\frac{|w(r,t)|^{p+1}}{r^{p-1}}\right) dr\\
 E_+ (t) & = \pi \int_{0}^{\infty} \left(\left|w_r(r,t)-w_t(r,t)\right|^2+\frac{2}{p+1}\cdot\frac{|w(r,t)|^{p+1}}{r^{p-1}}\right) dr
\end{align*}
We can also rewrite these energies in term of $u$
\[
 E_\mp (t) = \int_{\Rm^3} \left(\frac{1}{4}\left|\nabla u(x,t)\cdot \frac{x}{|x|} + \frac{u(x,t)}{|x|} \pm u_t(x,t)\right|^2 + \frac{1}{2(p+1)} |u(x,t)|^{p+1}\right) dx.
\]
We also need to consider their truncated versions
\begin{align*}
 E_- (t;r_1,r_2) & = \pi \int_{r_1}^{r_2} \left(\left|w_r(r,t)+w_t(r,t)\right|^2+\frac{2}{p+1}\cdot\frac{|w(r,t)|^{p+1}}{r^{p-1}}\right) dr\\
 E_+ (t;r_1,r_2) & = \pi \int_{r_1}^{r_2} \left(\left|w_r(r,t)-w_t(r,t)\right|^2+\frac{2}{p+1}\cdot\frac{|w(r,t)|^{p+1}}{r^{p-1}}\right) dr\\
 E(t;r_1,r_2) & = E_-(t;r_1,r_2) + E_+(t;r_1,r_2)
 \end{align*}
\end{definition}
\noindent The energy fluxes defined below are the amount of inward energy that moves across a characteristic line $t+r=s$ and the amount of outward energy  that moves across a characteristic line $t-r = \tau$, respectively. These half-lines ($r>0$) correspond to backward/forward light cones centred at the $t$-axis in the original space time $\Rm_x^3\times \Rm_t$.
\begin{definition}[Notations of Energy fluxes]
 Given $s,\tau \in \Rm$, we define full energy fluxes
\begin{align*} 
 Q_-^- (s) & =  \frac{4\pi}{p+1} \int_{-\infty}^{s} \frac{|w(s-t,t)|^{p+1}}{(s-t)^{p-1}} dt;\\
 Q_+^+(\tau) & = \frac{4\pi}{p+1} \int_{\tau}^{+\infty} \frac{|w(t-\tau,t)|^{p+1}}{(t-\tau)^{p-1}} dt.
\end{align*}
We can also consider their truncated version, which is the energy flux across a segment of a characteristic line.  
\begin{align*} 
 Q_-^- (s; t_1,t_2) & =  \frac{4\pi}{p+1} \int_{t_1}^{t_2} \frac{|w(s-t,t)|^{p+1}}{(s-t)^{p-1}} dt,& &t_1<t_2\leq s;\\
 Q_+^+(\tau; t_1,t_2) & = \frac{4\pi}{p+1} \int_{t_1}^{t_2} \frac{|w(t-\tau,t)|^{p+1}}{(t-\tau)^{p-1}} dt,& &\tau\leq t_1<t_2.
\end{align*}
\end{definition}


\subsection{Uniform Pointwise Estimates} \label{sec:pointwise estimate}
In this subsection we first recall
\begin{lemma}[Please see Lemma 3.2 of Kenig and Merle's work \cite{km}] 
If $u$ is a radial $\dot{H}^1(\Rm^3)$ function, then 
\[
 |u(r)| \lesssim_1 r^{-1/2} \|u\|_{\dot{H}^1(\Rm^3)}.
\]
\end{lemma}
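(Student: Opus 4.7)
The plan is to reduce to the one-dimensional representation of $u$ as a radial function and apply Cauchy--Schwarz with the natural weight coming from the radial $\dot{H}^1$ norm.

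First, I would argue by density: it suffices to verify the bound for $u \in C_c^\infty(\Rm^3)$ radial, as the inequality then extends to all radial $\dot{H}^1$ functions by a standard approximation since both sides are continuous in the $\dot{H}^1$ topology (after identifying $u(r)$ with its continuous representative for radial functions, which are continuous away from the origin on account of Sobolev embedding). For such a smooth compactly supported $u$, the identity
\[
u(r) = -\int_r^\infty u'(s)\, ds
\]
holds pointwise because $u$ vanishes for large $r$.

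Next I would insert the weight $s$ into the integrand and apply Cauchy--Schwarz:
\[
|u(r)|^2 \leq \left(\int_r^\infty \frac{ds}{s^2}\right)\left(\int_r^\infty |u'(s)|^2 s^2\, ds\right) = \frac{1}{r}\int_r^\infty |u'(s)|^2 s^2\, ds.
\]
The remaining integral is controlled by the full $\dot{H}^1$ norm through the radial identity
\[
\|u\|_{\dot{H}^1(\Rm^3)}^2 = 4\pi\int_0^\infty |u'(s)|^2 s^2\, ds,
\]
so $|u(r)|^2 \leq (4\pi r)^{-1}\|u\|_{\dot{H}^1}^2$, which gives the claimed bound with the explicit constant $(4\pi)^{-1/2}$.

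There is no real obstacle here; the only subtle point is justifying the representation $u(r) = -\int_r^\infty u'(s)\,ds$ for a general radial $\dot{H}^1$ function, which is why I would handle the smooth compactly supported case first and then pass to the limit. The Cauchy--Schwarz step is forced by the structure of the radial $\dot{H}^1$ norm, and the weight $s^2$ in the norm is precisely what produces the $r^{-1/2}$ decay on the left.
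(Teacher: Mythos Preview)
Your proof is correct and is the standard argument for this radial Sobolev inequality (often attributed to Strauss). The paper does not actually give its own proof of this lemma; it simply cites Lemma~3.2 of Kenig--Merle \cite{km}. Since there is no in-paper proof to compare against, there is nothing more to say beyond confirming that your density-plus-Cauchy--Schwarz argument is valid and yields the sharp constant $(4\pi)^{-1/2}$.
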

\noindent Therefore we always have $|w(r,t)| \lesssim_1 E^{1/2} r^{1/2}$. We also need to use the following refined pointwise estimate. This utilizes the information from local $\dot{H}^1$ and $L^{p+1}$ norms of the solution. 
\begin{lemma} \label{local upper bound pointwise}
 Let $u$ be a radial $\dot{H}^1(\Rm^3)$ function and $w(r,t)=ru(r,t)$. The we have
 \[
  |w(R,t)| \lesssim_p R^{\frac{p-1}{p+3}}\left(\int_0^R |w_r(r,t)|^2 dr\right)^{\frac{1}{p+3}} \left(\int_0^R \frac{|w(r,t)|^{p+1}}{r^{p-1}} dr\right)^{\frac{1}{p+3}} 
 \]
\end{lemma}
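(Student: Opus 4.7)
The plan is a one-variable splitting argument. Write $A=\int_0^R |w_r(r,t)|^2\,dr$ and $B=\int_0^R |w(r,t)|^{p+1}/r^{p-1}\,dr$ for brevity; the goal is the aggregated bound $|w(R,t)|^{p+3}\lesssim_p R^{p-1} A\, B$, from which the claim follows by taking a $(p+3)$-th root. Both integrals carry information only on the size of $w$, so the natural strategy is to locate a subinterval of $[0,R]$ on which $|w(\cdot,t)|$ is comparable to $|w(R,t)|$ and then extract two complementary estimates for the length of that subinterval: one from Cauchy--Schwarz applied to the kinetic integral, and one from the weighted $L^{p+1}$ integral. Equating the two yields the desired inequality.

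The first step is to note $w(0,t)=0$. The previously recalled Kenig--Merle lemma gives $|u(r,t)|\lesssim_1 r^{-1/2}\|u(\cdot,t)\|_{\dot H^1(\Rm^3)}$, and since $w=ru$ we obtain $|w(r,t)|\lesssim r^{1/2}\to 0$ as $r\to 0^+$. Assume $w(R,t)\ne 0$ (otherwise the inequality is trivial) and set
\[
 r^\ast = \sup\bigl\{r\in[0,R] : |w(r,t)|\le |w(R,t)|/2\bigr\}.
\]
Since $w(0,t)=0$ the set is nonempty, and continuity forces $|w(r^\ast,t)|=|w(R,t)|/2$ and $|w(r,t)|>|w(R,t)|/2$ for every $r\in(r^\ast,R]$; in particular $r^\ast<R$.

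On $[r^\ast,R]$ two bounds for the length $\ell := R-r^\ast$ now appear. By the fundamental theorem of calculus and Cauchy--Schwarz,
\[
 \tfrac{1}{2}|w(R,t)| = |w(R,t)|-|w(r^\ast,t)| \le \int_{r^\ast}^R |w_r(r,t)|\,dr \le \ell^{1/2} A^{1/2},
\]
which gives $\ell \ge |w(R,t)|^2/(4A)$. On the other hand, since $r\mapsto r^{-(p-1)}$ is decreasing (we are in the range $p\ge 3>1$) and $|w|\ge |w(R,t)|/2$ throughout $[r^\ast,R]$,
\[
 B \ge \int_{r^\ast}^R \frac{|w(r,t)|^{p+1}}{r^{p-1}}\,dr \ge \frac{|w(R,t)|^{p+1}}{2^{p+1}}\cdot \frac{\ell}{R^{p-1}},
\]
so $\ell \le 2^{p+1} R^{p-1} B/|w(R,t)|^{p+1}$. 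Inserting this upper bound for $\ell$ into the lower bound $|w(R,t)|^2\le 4\ell\, A$ eliminates $\ell$ and yields $|w(R,t)|^{p+3}\le 2^{p+3} R^{p-1} A B$, which is the claim.

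I do not foresee any real obstacle: the argument is elementary once $w(0,t)=0$ and the continuity of $w(\cdot,t)$ are in place, both of which come directly from the radial $\dot H^1$ pointwise bound already cited. The only minor point to keep track of is that the assumption $p\ge 3$ guarantees both the monotonicity of $r^{-(p-1)}$ and that the constant $2^{p+3}$ depends only on $p$, consistent with the $\lesssim_p$ notation.
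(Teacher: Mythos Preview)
Your proof is correct and follows essentially the same approach as the paper: both locate a subinterval of $[0,R]$ near $R$ on which $|w|\ge |w(R,t)|/2$, then combine a Cauchy--Schwarz bound from the kinetic integral with a lower bound from the weighted $L^{p+1}$ integral to eliminate the interval length. The only cosmetic difference is that the paper fixes the interval length to be $|w(R,t)|^2/(4A)$ from the outset (using the H\"older continuity of $w$ to guarantee the level-set condition), whereas you define $r^\ast$ via the level set and then bound $\ell=R-r^\ast$ from both sides; the algebra and the final constant are identical.
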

\begin{proof}
For convenience let us omit the time $t$ in the argument below and use the notations 
\begin{align*}
 &E_1 = \int_0^R |w_r(r)|^2 dr, & &E_2 = \int_0^R \frac{|w(r)|^{p+1}}{r^{p-1}} dr.&
\end{align*}
First of all, we observe ($0< r< R$)
 \begin{equation}
  |w(R)-w(r)| = \left|\int_{r}^R w_r(s) ds \right| \leq |R-r|^{1/2} \left(\int_{r}^R |w_r(s)|^2 ds\right)^{1/2} \leq (E_1)^{1/2} |R-r|^{1/2}. \label{continuity half}
 \end{equation}
 We recall $w(r)\rightarrow 0$ as $r\rightarrow 0^+$ thus $|w(R)|\leq R^{1/2}E_1^{1/2}$. Let us consider $r \in [R-{|w(R)|^2}/{4E_1},R]\subseteq [3R/4,R]$. The inequality \eqref{continuity half} implies $|w(r)-w(R)|<|w(R)|/2$ thus $|w(r)|>|w(R)|/2$.  As a result
\begin{align*}
E_2 \geq \int_{R-{|w(R)|^2}/{4E_1}}^{R} \frac{|w(r)|^{p+1}}{r^{p-1}} dr \geq \frac{(|w(R)|/2)^{p+1}}{R^{p-1}}\cdot |w(R)|^2/4E_1 \gtrsim_p \frac{|w(R)|^{p+3}}{R^{p-1}E_1}.
\end{align*} 
Thus we have $w(R) \lesssim_p R^{(p-1)/(p+3)} E_1^{1/(p+3)} E_2^{1/(p+3)}$ and finish the proof.
\end{proof}

\subsection{Energy Flux Formula and Energy Distribution}

In this subsection we recall some results regarding energy distribution and transformation from the author's recent work \cite{energy dis}.

\begin{proposition}[General Energy Flux, see Proposition 3.1 and Remark 3.3 of \cite{energy dis}] \label{energy flux formula}
 Let $\Omega$ be a closed region in the right half $(0,\infty)\times \Rm$ of $r-t$ space. Its boundary $\Gamma$ consists of finite line segments, which are paralleled to either $t$-axis, $r$-axis or characteristic lines $t\pm r = 0$, and is oriented counterclockwise. Then we have
 \begin{align}
  \pi \int_{\Gamma} \left(|w_r+w_t|^2 + \frac{2}{p+1}\cdot \frac{|w|^{p+1}}{r^{p-1}}\right) dr & + \left(|w_r+w_t|^2 - \frac{2}{p+1}\cdot \frac{|w|^{p+1}}{r^{p-1}}\right) dt \nonumber \\
  & \qquad -\frac{2\pi(p-1)}{p+1} \iint_{\Omega} \frac{|w|^{p+1}}{r^p} dr dt = 0. \label{energy flux inward}\\
  \pi \int_{\Gamma} \left(|w_r-w_t|^2 + \frac{2}{p+1}\cdot \frac{|w|^{p+1}}{r^{p-1}}\right) dr & + \left(-|w_r-w_t|^2 + \frac{2}{p+1}\cdot \frac{|w|^{p+1}}{r^{p-1}}\right) dt \nonumber \\
  &\qquad  + \frac{2\pi(p-1)}{p+1} \iint_{\Omega} \frac{|w|^{p+1}}{r^p} dr dt = 0 \label{energy flux outward}. 
 \end{align}
 Furthermore, there exists a function $\xi \in L^2(\Rm)$ with $\|\xi\|_{L^2}^2 \lesssim_p E$, which is solely determined by $u$ and independent of $\Omega$, so that the identities above also hold for regions $\Omega$ with part of their boundary on the $t$-axis. In this case the line integral from the point $(0,t_2)$ downward to $(0,t_1)$ along $t$-axis is equal to either $- \pi \int_{t_1}^{t_2} |\xi(t)|^2 dt$ in identity \eqref{energy flux inward} or $\pi \int_{t_1}^{t_2} |\xi(t)|^2 dt$ in identity \eqref{energy flux outward}.
\end{proposition}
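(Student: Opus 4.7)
The plan is to derive both identities as Green's theorem applied to the 1-form built from the inward or outward energy density, with the exterior derivative computed directly from the one-dimensional equation $w_{tt}-w_{rr}=-|w|^{p-1}w/r^{p-1}$. A one-line calculation gives
\[
 (\partial_t-\partial_r)|w_r+w_t|^2 = -2(w_r+w_t)\cdot\tfrac{|w|^{p-1}w}{r^{p-1}},
\]
while differentiating the potential produces
\[
 \tfrac{2}{p+1}(\partial_t+\partial_r)\tfrac{|w|^{p+1}}{r^{p-1}} = 2(w_r+w_t)\cdot\tfrac{|w|^{p-1}w}{r^{p-1}} - \tfrac{2(p-1)}{p+1}\cdot\tfrac{|w|^{p+1}}{r^p}.
\]
Summing cancels the nonlinear source and delivers
\[
 \partial_t\!\left(|w_r+w_t|^2+\tfrac{2}{p+1}\tfrac{|w|^{p+1}}{r^{p-1}}\right)-\partial_r\!\left(|w_r+w_t|^2-\tfrac{2}{p+1}\tfrac{|w|^{p+1}}{r^{p-1}}\right) = -\tfrac{2(p-1)}{p+1}\tfrac{|w|^{p+1}}{r^p},
\]
which is exactly the content of \eqref{energy flux inward} via Green's theorem on any $\Omega$ whose boundary avoids $\{r=0\}$. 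The mirror manipulation based on $(\partial_t+\partial_r)|w_r-w_t|^2$ and $(\partial_t-\partial_r)$ acting on the potential produces \eqref{energy flux outward}; the sign flip on the interior term reflects the opposite characteristic direction. I would perform the computation first for smooth data and extend to finite-energy data by the standard density argument, noting that both line-integrands are pointwise controlled by the energy density while $|w|^{p+1}/r^p$ is the Morawetz multiplier, locally integrable on any set a positive distance from $\{r=0\}$.

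To admit regions whose boundary contains a segment of $\{r=0\}$ I would approximate $\Omega$ by $\Omega_\eps = \Omega\cap\{r\geq\eps\}$ and apply the already-proved identity. Running \eqref{energy flux inward} on a thin strip $\eps_1\leq r\leq \eps_2$ between two time levels shows that the downward-oriented line integral
\[
 \pi\int\!\Bigl(|w_r+w_t|^2-\tfrac{2}{p+1}\tfrac{|w|^{p+1}}{r^{p-1}}\Bigr)\!(\eps,t)\,dt
\]
is monotone in $\eps$: the interior piece on the strip is nonnegative and the top and bottom corrections vanish as $\eps_1,\eps_2\to 0^+$. For data smooth near the origin one has $w(r,t)=ru(r,t)$, hence $w_t(0,t)=0$ and $w_r(0,t)=u(0,t)$, and the potential piece is $O(\eps^2)$, so this limit equals $-\pi\int|u(0,t)|^2\,dt$ and one sets $\xi(t)=u(0,t)$. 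For merely finite-energy data, in which $u(0,\cdot)$ has no classical trace, $\pi\int_{t_1}^{t_2}|\xi(t)|^2\,dt$ is \emph{defined} to be this monotone limit; independence of $\Omega$ is forced by the additivity of the flux identity over decompositions, and the parallel argument for \eqref{energy flux outward} yields the same $\xi$, since $w_t=0$ at $r=0$ forces $|w_r+w_t|^2=|w_r-w_t|^2$ there. The two opposite signs announced in the statement then come from the downward orientation of the $t$-axis segment combined with the opposite signs of the $dt$ coefficients in the two identities.

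For the $L^2$ bound I would specialize \eqref{energy flux inward} to $\Omega=(0,\infty)\times(t_1,t_2)$. The horizontal segments give $E_-(t_1)-E_-(t_2)$, the contribution at $r=\infty$ vanishes by finite total energy, the $t$-axis segment contributes $-\pi\int_{t_1}^{t_2}|\xi|^2\,dt$, and the interior integral is nonnegative; rearranging yields $\pi\int_{t_1}^{t_2}|\xi|^2\,dt\leq E_-(t_1)\leq E$, and letting $t_1,t_2\to\mp\infty$ gives $\|\xi\|_{L^2(\Rm)}^2\lesssim_p E$. The step I expect to be the principal obstacle is precisely the construction of $\xi$ at the energy-class level: no pointwise trace at $r=0$ is available, so $\xi$ must be built through the flux identity itself as a monotone $L^2$ limit, and one must then verify that this limit depends only on $u$ (and not on the approximating sequence $\eps_n$ or the region $\Omega$) and that the two identities furnish the same function. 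Once this is secured, the remaining assembly—summing over the finitely many line segments of $\Gamma$ and handling the characteristic segments where the $dr\pm dt$ pair annihilates the proper pieces—is routine bookkeeping.
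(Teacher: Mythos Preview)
The paper does not prove this proposition here; it is quoted from \cite{energy dis} (Proposition~3.1 and Remark~3.3) as a preliminary result, so there is no in-paper argument to compare against. Your outline is the standard route and matches what that reference carries out: the divergence computation is correct, Green's theorem yields both identities for any $\Omega$ bounded away from $\{r=0\}$, and $\xi$ is obtained as the limit of the vertical flux at $r=\eps$, coinciding with $u(0,\cdot)$ for smooth data.

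The one point you leave slightly underspecified is the nonnegativity of the limiting quantity $\lim_{\eps\to 0}\pi\int_{t_1}^{t_2}\bigl(|w_r+w_t|^2-\tfrac{2}{p+1}|w|^{p+1}/r^{p-1}\bigr)(\eps,t)\,dt$ for merely finite-energy data. Your strip argument (together with the Morawetz bound $\iint|w|^{p+1}/r^p<\infty$) gives existence of the limit, but to write it as $\pi\int_{t_1}^{t_2}|\xi|^2\,dt$ one must also check that the potential contribution $\int_{t_1}^{t_2}|w(\eps,t)|^{p+1}/\eps^{p-1}\,dt$ tends to zero; the pointwise bound $|w|\lesssim E^{1/2}r^{1/2}$ alone is insufficient when $p>3$. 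This is handled either by approximation with smooth data and passing to the limit, or directly by observing that Fubini and the Morawetz integral force $\liminf_{\eps\to 0}\eps\cdot\int_{t_1}^{t_2}|w(\eps,t)|^{p+1}/\eps^{p}\,dt=0$, which combined with the existence of the full limit gives the desired sign. You have in any case correctly identified this step as the principal technical obstacle.
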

\begin{remark}
 According to the following classic Morawetz estimate, the double integral in the energy flux formula above is always finite
 \[
  \int_{-\infty}^{\infty} \int_{\Rm^3} \frac{|u(x,t)|^{p+1}}{|x|} dx dt \lesssim_p E \quad \Rightarrow \quad \iint_{\Omega} \frac{|w(r,t)|^{p+1}}{r^p} dr dt \lesssim_p E.
 \]
\end{remark}
We apply the energy flux formula on the triangles $\Omega_u=\{(r,t): r>0, t>t_0, r+t<r_0+t_0\}$ and $\Omega_d =\{(r,t): r>0, t<t_0, t-r>t_0-r_0\}$ to obtain the following triangle law. This will be frequently used in this work.
\begin{proposition}[Triangle Law] \label{triangle law}
Let $t_0\in \Rm$ and $r_0>0$
\begin{align*}
 E_-(t_0;0,r_0) = & \pi \int_{t_0}^{t_0+r_0} \!\!|\xi(t)|^2 dt + Q_-^-(r_0+t_0;t_0,r_0+t_0)+ \frac{2\pi(p-1)}{p+1} \iint_{\Omega_{u}}\! \frac{|w(r,t)|^{p+1}}{{r}^p} dr dt;\\
 E_+(t_0;0,r_0) = & \pi \int_{t_0-r_0}^{t_0} \!|\xi(t)|^2 dt + Q_+^+(t_0-r_0;t_0-r_0,t_0)+ \frac{2\pi(p-1)}{p+1} \iint_{\Omega_{d}}\! \frac{|w(r,t)|^{p+1}}{{r}^p} dr dt.
\end{align*}
\end{proposition}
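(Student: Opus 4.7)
The plan is to prove both identities as direct applications of the energy flux formula from Proposition \ref{energy flux formula}, so the real work is bookkeeping: identifying the three boundary pieces of each triangle, checking the counterclockwise orientation, and computing each line integral against the one-form appearing in \eqref{energy flux inward} or \eqref{energy flux outward}.

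For the first identity I would apply \eqref{energy flux inward} to $\Omega_u$, whose boundary $\Gamma_u$ consists of (i) the horizontal segment from $(0,t_0)$ to $(r_0,t_0)$, (ii) the characteristic segment from $(r_0,t_0)$ to $(0,t_0+r_0)$ along $r+t=r_0+t_0$, and (iii) the vertical segment from $(0,t_0+r_0)$ down to $(0,t_0)$. A quick check of tangent directions confirms that this traversal keeps $\Omega_u$ on the left, so it is positively oriented. On piece (i) one has $dt=0$, and the integral reproduces $E_-(t_0;0,r_0)$ by the definition of the truncated inward energy. On piece (ii), writing the one-form as $|w_r+w_t|^2(dr+dt)+\frac{2}{p+1}\frac{|w|^{p+1}}{r^{p-1}}(dr-dt)$, the first term vanishes because $dr+dt=0$ along the characteristic, and parametrising by $t$ (so $r=r_0+t_0-t$ and $dr-dt=-2\,dt$) turns the remaining term into exactly $-Q_-^-(r_0+t_0;t_0,r_0+t_0)$. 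On piece (iii), the statement of Proposition \ref{energy flux formula} directly identifies the contribution as $-\pi\int_{t_0}^{t_0+r_0}|\xi(t)|^2\,dt$. Substituting these three computations into \eqref{energy flux inward} and moving the double-integral term to the right produces the first identity.

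The second identity follows by the mirror-image computation on $\Omega_d$ using \eqref{energy flux outward}. The counterclockwise boundary here runs from $(0,t_0)$ down to $(0,t_0-r_0)$, then up the characteristic $t-r=t_0-r_0$ to $(r_0,t_0)$, and finally back along $t=t_0$ to $(0,t_0)$; again a tangent check confirms the orientation. The horizontal piece now contributes $-E_+(t_0;0,r_0)$ because it is traversed right-to-left. Along the characteristic the one-form reduces to $\tfrac{2}{p+1}\frac{|w|^{p+1}}{r^{p-1}}(dr+dt)=\tfrac{4}{p+1}\frac{|w|^{p+1}}{r^{p-1}}\,dr$, which after reparametrising by $t$ becomes $Q_+^+(t_0-r_0;t_0-r_0,t_0)$. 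Finally, the downward $t$-axis piece gives $+\pi\int_{t_0-r_0}^{t_0}|\xi(t)|^2\,dt$, the sign being opposite to the previous case because we are now using \eqref{energy flux outward}. Rearranging yields the formula.

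The only place where one has to be careful is the sign accounting on the characteristic pieces and on the $t$-axis piece, where the sign of the $\xi$-contribution depends on which of the two flux formulas is being used and on whether the $t$-axis segment is being traversed downward in the counterclockwise loop. That is what I would expect to double-check most carefully, but there is no real analytical obstacle: once the orientation and parametrisations are set up correctly, each piece of the calculation is a one-line substitution into the identities of Proposition \ref{energy flux formula}.
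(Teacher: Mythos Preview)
Your proposal is correct and follows exactly the approach the paper indicates: the paper's entire proof is the one-line remark that one ``apply[s] the energy flux formula on the triangles $\Omega_u$ and $\Omega_d$,'' and your careful bookkeeping of the three boundary pieces, orientations, and signs for each triangle carries this out faithfully. The sign checks you flag (characteristic pieces and the $\xi$-contribution on the $t$-axis) are handled correctly.
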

\noindent The following result can be understood as a triangle law for $E_-$ with $r_0=+\infty$. 
\begin{proposition} \label{infinite triangle}
Given any time $t$,
\begin{equation*}
 E_-(t) = \pi \int_{t}^{\infty} |\xi(t')|^2 dt' + \frac{2\pi (p-1)}{p+1} \int_{t}^{+\infty} \!\!\int_0^{+\infty} \frac{|w(r,t')|^{p+1}}{r^p} dr dt'. 
\end{equation*}
\end{proposition}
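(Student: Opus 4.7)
Plan: I would obtain the formula by passing to the limit $r_0 \to +\infty$ in the finite triangle law of Proposition \ref{triangle law}, applied to $\Omega_u(r_0) = \{r>0,\,t>t_0,\,r+t<r_0+t_0\}$. That identity gives
\[
E_-(t_0;0,r_0) = \pi\int_{t_0}^{t_0+r_0}|\xi(t)|^2\,dt + Q_-^-(r_0+t_0;t_0,r_0+t_0) + \frac{2\pi(p-1)}{p+1}\iint_{\Omega_u(r_0)}\frac{|w|^{p+1}}{r^p}\,dr\,dt.
\]
Three of the four terms have immediate monotone-convergence limits as $r_0\to\infty$: $E_-(t_0;0,r_0)\nearrow E_-(t_0)\le E$ by definition; $\pi\int_{t_0}^{t_0+r_0}|\xi|^2\,dt\to\pi\int_{t_0}^\infty|\xi|^2\,dt$ since $\xi\in L^2(\mathbb R)$ by Proposition \ref{energy flux formula}; and the double integral converges to $\tfrac{2\pi(p-1)}{p+1}\iint_{r>0,\,t>t_0}|w|^{p+1}/r^p\,dr\,dt$, which is finite by the classical Morawetz estimate recorded in the remark after Proposition \ref{energy flux formula}. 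Hence the slanted-characteristic flux $Q_-^-(r_0+t_0;t_0,r_0+t_0)$ necessarily has a limit $L\geq 0$, and the proposition amounts to showing $L=0$.

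The main obstacle is the vanishing of this flux. After the substitution $r=r_0+t_0-t$, it equals $\tfrac{4\pi}{p+1}\int_0^{r_0}|w(r,r_0+t_0-r)|^{p+1}/r^{p-1}\,dr$, which I would split at a small scale $r=M$. For the inner piece $r\in[0,M]$, the uniform pointwise bound of Lemma \ref{local upper bound pointwise} yields $|w|^{p+1}/r^{p-1}\lesssim_p r^{-2(p-1)/(p+3)}E^{2(p+1)/(p+3)}$; the exponent lies in $(-1,0)$ for $p\in[3,5)$, so this piece is controlled uniformly in $r_0$ by $C_pM^{(5-p)/(p+3)}E^{2(p+1)/(p+3)}$, which is arbitrarily small when $M$ is small. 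For the outer piece $r\in[M,r_0]$, I would apply Proposition \ref{triangle law} to the subtriangle with vertices $(0,r_0+t_0-M)$, $(M,r_0+t_0-M)$, $(0,r_0+t_0)$, which bounds $Q_-^-(r_0+t_0;r_0+t_0-M,r_0+t_0)\le E_-(r_0+t_0-M;0,M)$. The middle region is absorbed into either bound by choosing $M$ small and the auxiliary base time $T=r_0+t_0-M$ large, so that $L$ is dominated by $E_-(T;0,M)$ at arbitrarily late $T$ for arbitrarily small $M$.

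The residual difficulty, which I expect to be the technical heart of the argument, is establishing $\lim_{t\to\infty}E_-(t)=0$. To this end I would first prove the monotone decrease of $E_-(\cdot)$ by applying identity \eqref{energy flux inward} to the bounded rectangle $\{0<r<R,\,t_0<t<t_0+T\}$ and letting $R\to\infty$: the pointwise bound of Lemma \ref{local upper bound pointwise}, combined with an average in $R\in[R_0,2R_0]$ of the indefinite-sign boundary term, drives the $r=R$ contribution to $0$. This yields the telescoping identity
\[
E_-(t_0)-E_-(t_0+T)=\pi\int_{t_0}^{t_0+T}|\xi|^2\,dt+\frac{2\pi(p-1)}{p+1}\iint_{t_0<t<t_0+T,\,r>0}\frac{|w|^{p+1}}{r^p}\,dr\,dt\ge 0,
\]
so the limit $L':=\lim_{t\to\infty}E_-(t)\ge 0$ exists. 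Then $L'=0$ should follow via a sequence argument: the classical Morawetz estimate $\iint|w|^{p+1}/r^p\,dr\,dt<\infty$ gives a sequence $t_n\to\infty$ on which $\int|w(\cdot,t_n)|^{p+1}/r^p\,dr\to 0$, and combined with the pointwise bound of Lemma \ref{local upper bound pointwise} this forces the potential part of $E_-(t_n)$ to vanish; the kinetic inward part is then handled analogously using that $\xi\in L^2$ prevents accumulation of inward flux at the origin. Once $L'=0$ is in hand, sending $T\to\infty$ in the telescoping identity produces exactly the stated formula.
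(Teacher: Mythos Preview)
Your overall strategy---pass to the limit $r_0\to\infty$ in the triangle law of Proposition~\ref{triangle law}---is exactly what the paper intends (it says the result ``can be understood as a triangle law for $E_-$ with $r_0=+\infty$''), and your monotone-convergence treatment of three of the four terms is fine. But you are working far harder than necessary on the remaining flux term, and in doing so you both overlook an available tool and introduce a genuine gap.

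The tool you are missing is Proposition~\ref{a limit of Q}, which the paper recalls from \cite{energy dis} precisely for this purpose (and invokes in the same way inside the proof of Proposition~\ref{Weighted Morawetz Inequality}). Since the integrand in $Q_-^-$ is nonnegative, the truncated flux is dominated by the full one,
\[
0\;\le\;Q_-^-(r_0+t_0;\,t_0,\,r_0+t_0)\;\le\;Q_-^-(r_0+t_0)\;\longrightarrow\;0\qquad(r_0\to\infty),
\]
and the proposition follows in one line. Equivalently, your telescoping identity together with Proposition~\ref{monotonicity of energies} (also recalled from \cite{energy dis}) gives $L'=\lim_{t\to\infty}E_-(t)=0$ immediately, with no sequence argument needed.

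Your attempted from-scratch arguments for these two limits have real issues. In the flux-splitting argument you label the subtriangle with vertices $(0,r_0+t_0-M)$, $(M,r_0+t_0-M)$, $(0,r_0+t_0)$ as controlling the ``outer piece $r\in[M,r_0]$'', but that subtriangle meets the characteristic $r+t=r_0+t_0$ only along $r\in[0,M]$; it bounds $Q_-^-(r_0+t_0;\,r_0+t_0-M,\,r_0+t_0)$, which is again the \emph{inner} piece. You never actually bound the outer segment $r\in[M,r_0]$, so $L=0$ is not established. Likewise, your sketch for $L'=0$ is not solid: Morawetz gives decay of $\int|w|^{p+1}r^{-p}\,dr$ along a subsequence, but the potential part of $E_-$ carries the different weight $r^{-(p-1)}$, and the assertion that the kinetic inward part ``is handled analogously using that $\xi\in L^2$'' is too vague to be a proof. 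None of this matters once you cite Propositions~\ref{monotonicity of energies} and~\ref{a limit of Q}; those are part of the paper's preliminary toolkit and you should simply use them.
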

\noindent The following propositions give some known asymptotic behaviour of energies and fluxes.
\begin{proposition}[Monotonicity of Inward and Outward Energies] \label{monotonicity of energies}
 The inward energy $E_-(t)$ is a decreasing function of $t$, while the outward energy $E_+(t)$ is an increasing function of $t$. In addition 
\begin{align*}
 &\lim_{t\rightarrow +\infty} E_-(t) = 0; & &\lim_{t\rightarrow -\infty} E_+(t) = 0.&
\end{align*}
\end{proposition}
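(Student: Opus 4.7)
The plan is to read the proposition essentially off Proposition \ref{infinite triangle}, together with its time-reversed analog for $E_+$. First, given $t_1<t_2$, subtracting the two instances of Proposition \ref{infinite triangle} at $t=t_1$ and $t=t_2$ yields
\[
E_-(t_1)-E_-(t_2) \;=\; \pi \int_{t_1}^{t_2}\! |\xi(t')|^2\, dt' \;+\; \frac{2\pi(p-1)}{p+1}\int_{t_1}^{t_2}\!\!\int_0^\infty \frac{|w(r,t')|^{p+1}}{r^p}\, dr\, dt' \;\geq\; 0,
\]
which is the monotonicity of $E_-$. The limit $\lim_{t\to+\infty} E_-(t)=0$ follows immediately from the same identity: both quantities on the right of Proposition \ref{infinite triangle} are tails of convergent integrals. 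The first tail vanishes because $\|\xi\|_{L^2(\Rm)}^2\lesssim_p E$, and the second vanishes by the classical Morawetz bound $\iint |w|^{p+1}/r^p\,dr\,dt'\lesssim_p E$ recalled in the remark after Proposition \ref{energy flux formula}.

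For $E_+$ I would establish the symmetric identity
\[
E_+(t) \;=\; \pi\int_{-\infty}^t |\xi(t')|^2\, dt' \;+\; \frac{2\pi(p-1)}{p+1}\int_{-\infty}^t\!\!\int_0^\infty \frac{|w(r,t')|^{p+1}}{r^p}\, dr\, dt',
\]
either by repeating the derivation of Proposition \ref{infinite triangle} using the outward flux identity \eqref{energy flux outward} on the rectangles $[0,R]\times[T,t]$ and sending $R\to\infty$, $T\to-\infty$, or, more cleanly, by applying the time-reversal symmetry $u(x,t)\mapsto u(x,-t)$, which interchanges the roles of $E_-$ and $E_+$ (and replaces $\xi(t)$ by $\xi(-t)$, leaving $|\xi|^2$ invariant). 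Once this identity is in hand, the monotonicity of $E_+$ and the limit $\lim_{t\to-\infty}E_+(t)=0$ follow in exactly the same way as for $E_-$.

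The only technical nuisance, should one derive the $E_+$ identity directly rather than through time reversal, is controlling the contribution of the right vertical edge $\{r=R\}$ of the approximating rectangle when one lets $R\to\infty$ in \eqref{energy flux outward}. Since $E_+(t')$ is uniformly bounded by $E$ and the integrand is integrable in $r$ for a.e.\ $t'$, one extracts a sequence $R_n\to\infty$ along which $\int_T^t \bigl(|w_r-w_t|^2 - \tfrac{2}{p+1}|w|^{p+1}/r^{p-1}\bigr)(R_n,t')\,dt' \to 0$, and passes to the limit; the time-reversal route bypasses this subtlety entirely, which is why I would favor it. Everything else is routine once Proposition \ref{infinite triangle} and its $E_+$-analog are secured.
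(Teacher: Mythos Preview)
The paper does not actually prove this proposition; it is recalled from the author's earlier work \cite{energy dis} without argument, along with Propositions \ref{energy flux formula}, \ref{triangle law}, \ref{infinite triangle}, \ref{a limit of Q} and \ref{limits of wr pm wt}. So there is no ``paper's own proof'' to compare against.

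Your derivation is correct and is essentially the intended one: Proposition \ref{infinite triangle} expresses $E_-(t)$ as the tail of two nonnegative convergent integrals (the $L^2$ bound on $\xi$ and the Morawetz bound from the remark after Proposition \ref{energy flux formula}), which immediately gives both monotonicity and the vanishing limit. The time-reversal argument for $E_+$ is the cleanest way to transfer the result, exactly as you say; the direct rectangle argument with a subsequence $R_n\to\infty$ also works, and your sketch of how to kill the right-edge contribution is standard. There is no gap.
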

\begin{proposition} \label{a limit of Q}
 We have the limits
 \begin{align*}
  &\lim_{s \rightarrow +\infty} Q_-^-(s) = 0;& &\lim_{\tau \rightarrow -\infty} Q_+^+(\tau) = 0.&
 \end{align*}
\end{proposition}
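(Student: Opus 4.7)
The plan is to compare two forms of the inward-energy identity and exploit the resulting cancellation. Fix $s>0$ and $T>0$ large, and apply the triangle law (Proposition \ref{triangle law}) with $t_0=-T$, $r_0=s+T$ (so $t_0+r_0=s$). This produces
$$E_-(-T;\,0,s+T) = \pi\int_{-T}^{s}|\xi(t)|^2\,dt + Q_-^-(s;\,-T,s) + \frac{2\pi(p-1)}{p+1}\iint_{\Omega_{s,T}}\frac{|w|^{p+1}}{r^p}\,dr\,dt,$$
where $\Omega_{s,T}=\{(r,t):r>0,\,t>-T,\,r+t<s\}$. Separately, Proposition \ref{infinite triangle} applied at time $-T$ supplies
$$E_-(-T) = \pi\int_{-T}^{\infty}|\xi(t)|^2\,dt + \frac{2\pi(p-1)}{p+1}\iint_{\{r>0,\,t>-T\}}\frac{|w|^{p+1}}{r^p}\,dr\,dt.$$

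The trivial truncation bound $E_-(-T;\,0,s+T)\le E_-(-T)$ holds because the integrand defining $E_-$ is pointwise non-negative. Subtracting the first identity from the second therefore yields
$$Q_-^-(s;\,-T,s) \le \pi\int_s^{\infty}|\xi(t)|^2\,dt + \frac{2\pi(p-1)}{p+1}\iint_{\{t>-T,\,r>0,\,r+t>s\}}\frac{|w|^{p+1}}{r^p}\,dr\,dt.$$
Passing to the limit $T\to\infty$ via monotone convergence on each side delivers
$$Q_-^-(s) \le \pi\int_s^{\infty}|\xi(t)|^2\,dt + \frac{2\pi(p-1)}{p+1}\iint_{\{r>0,\,r+t>s\}}\frac{|w|^{p+1}}{r^p}\,dr\,dt.$$

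Finally, as $s\to\infty$ the first term on the right tends to zero because $\xi\in L^2(\Rm)$ (Proposition \ref{energy flux formula}), and the second vanishes by dominated convergence, since the total Morawetz integral $\iint_{\{r>0\}}\frac{|w|^{p+1}}{r^p}\,dr\,dt$ is finite by the remark following Proposition \ref{energy flux formula}. Combined with the obvious non-negativity $Q_-^-(s)\ge 0$, this establishes $\lim_{s\to\infty}Q_-^-(s)=0$. The companion statement $\lim_{\tau\to-\infty}Q_+^+(\tau)=0$ follows by the mirror argument: one uses identity \eqref{energy flux outward} on the downward triangle $\{r>0,\,t<T,\,t-r>\tau\}$ and compares with the time-reversed analogue of Proposition \ref{infinite triangle} for $E_+$ at large positive time. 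I do not anticipate any serious obstacle; the only care needed is justifying the monotone- and dominated-convergence passages, which are standard once the integrands involved are known to be globally integrable.
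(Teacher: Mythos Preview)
The paper does not supply its own proof of this proposition; it is quoted as a preliminary result from the author's earlier work \cite{energy dis}. So there is no in-paper argument to compare against.

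Your argument is correct. The subtraction of the triangle-law identity from the infinite-triangle identity (Proposition~\ref{infinite triangle}) is exactly the right mechanism: it isolates $Q_-^-(s;-T,s)$ and dominates it by the tail $\pi\int_s^\infty|\xi|^2\,dt$ plus the Morawetz mass outside the characteristic line $r+t=s$, both of which are tails of finite integrals and hence vanish as $s\to\infty$. The passage $T\to\infty$ is unproblematic since every integrand is non-negative. One small remark: you do not actually need the inequality $E_-(-T;0,s+T)\le E_-(-T)$ followed by a subtraction---you could instead apply the energy-flux identity \eqref{energy flux inward} directly on the unbounded region $\{r>0,\,t>-T,\,r+t>s\}$ (this is essentially how the paper handles the analogous region $\Omega(s)$ in the proof of Proposition~\ref{Weighted Morawetz Inequality}), which yields the same bound as an equality rather than an inequality. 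But your route is equally valid and arguably cleaner in that it only invokes results already stated as propositions.
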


The following proposition gives convergence of $w_r\pm w_t$ along corresponding characteristic lines from the energy point of view. We will use it to deal with the endpoint $\kappa = \kappa_0(p)$.
\begin{proposition} \label{limits of wr pm wt}
There exist functions $g_-(s), g_+(\tau)$ with $\|g_-\|_{L^2(\Rm)}^2, \|g_+\|_{L^2(\Rm)}^2 \leq E/\pi$. so that 
\begin{align*}
 &\left|(w_r+w_t)(s-t,t) - g_-(s)\right| \lesssim_{p,E} (s-t)^{-\frac{p-2}{p+1}},& &t<s;&\\
 &\left|(w_r-w_t)(t-\tau,t) - g_+(\tau)\right| \lesssim_{p,E} (t-\tau)^{-\frac{p-2}{p+1}},& &t>\tau.&
\end{align*}
Furthermore we have the following $L^2$ convergence for any given $s_0,\tau_0 \in \Rm$
\begin{align*}
 &\lim_{t \rightarrow -\infty} \|(w_r+w_t)(s-t,t) - g_-(s)\|_{L_s^2([s_0,\infty))} = 0; \\
 &\lim_{t \rightarrow +\infty} \|(w_r-w_t)(t-\tau,t) - g_+(\tau)\|_{L_\tau^2((-\infty, \tau_0])} = 0.
\end{align*}
\end{proposition}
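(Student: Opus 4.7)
My plan is to track $(w_r+w_t)$ along the inward characteristic $r+t=s$. Setting $H(s,t):=(w_r+w_t)(s-t,t)$ for $t<s$ and using $w_{tt}-w_{rr}=-|w|^{p-1}w/r^{p-1}$, the chain rule along the characteristic yields
\[
\frac{d}{dt}H(s,t) = -\frac{|w(s-t,t)|^{p-1}w(s-t,t)}{(s-t)^{p-1}}.
\]
Integrating from $t_1$ to $t_2$ with $t_1<t_2<s$ and combining H\"older's inequality with the flux bound $Q_-^-(s)\lesssim_p E$ yields
\[
|H(s,t_2)-H(s,t_1)|\leq\left(\int_{t_1}^{t_2}\frac{|w|^{p+1}}{(s-t')^{p-1}}dt'\right)^{p/(p+1)}\left(\int_{t_1}^{t_2}\frac{dt'}{(s-t')^{p-1}}\right)^{1/(p+1)}\lesssim_{p,E}(s-t_2)^{-(p-2)/(p+1)}.
\]
This is a Cauchy criterion in $t$ for each fixed $s$, so $g_-(s):=\lim_{t\to-\infty}H(s,t)$ exists, and letting $t_1\to-\infty$ produces the pointwise rate in the proposition. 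The $L^2$ bound $\|g_-\|_{L^2(\Rm)}^2\leq E/\pi$ follows from Fatou's lemma applied to the uniform estimate $\|H(\cdot,t)\|_{L^2(\Rm)}^2=\int_0^\infty|(w_r+w_t)(r,t)|^2dr\leq E_-(t)/\pi\leq E/\pi$, after extending $H(\cdot,t)$ by zero on $(-\infty,t]$.

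The main obstacle will be upgrading this pointwise rate to $L^2$ convergence on $[s_0,\infty)$, since $(s-t)^{-(p-2)/(p+1)}$ is not square-integrable in $s$ at infinity when $p<5$. My plan is to deduce strong $L^2([s_0,\infty))$ convergence from weak convergence plus norm convergence. Weak convergence $H(\cdot,t)\rightharpoonup g_-$ in $L^2(\Rm)$, and hence in $L^2([s_0,\infty))$, is immediate from pointwise convergence and the uniform $L^2$ bound. For norm convergence, the substitution $r=s-t$ rewrites
\[
\int_{s_0}^\infty|H(s,t)|^2ds = \int_{s_0-t}^\infty|(w_r+w_t)(r,t)|^2dr = \frac{E_-(t;s_0-t,\infty)}{\pi}-\frac{2}{p+1}\int_{s_0-t}^\infty\frac{|w|^{p+1}}{r^{p-1}}dr.
\]
The potential remainder is bounded by $E_+(t)/\pi$, which vanishes as $t\to-\infty$ by Proposition \ref{monotonicity of energies} together with the identity $E_-+E_+=E$. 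For the kinetic remainder I subtract the triangle law (Proposition \ref{triangle law}) on the triangle with vertices $(0,t),(s_0-t,t),(0,s_0)$ from Proposition \ref{infinite triangle}: as $t\to-\infty$ the triangle exhausts $\{(r,t'):r>0,\,t'<s_0,\,r+t'<s_0\}$, yielding
\[
\lim_{t\to-\infty}\frac{E_-(t;s_0-t,\infty)}{\pi}=\int_{s_0}^\infty|\xi|^2dt'-\frac{Q_-^-(s_0)}{\pi}+\frac{2(p-1)}{p+1}\iint_{r+t'>s_0}\frac{|w|^{p+1}}{r^p}dr\,dt'.
\]
A parallel flux bookkeeping on the complementary unbounded domain $\{r+t'>s_0\}$, treating $g_-$ as the incoming data at $t=-\infty$ along the characteristics $r+t'=s'>s_0$, identifies this limit with $\|g_-\|_{L^2([s_0,\infty))}^2$, delivering the required norm convergence and therefore strong $L^2([s_0,\infty))$ convergence. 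The statements for $g_+$ follow by reversing time, which exchanges $(w_r+w_t)\leftrightarrow(w_r-w_t)$, $E_-\leftrightarrow E_+$, and $Q_-^-\leftrightarrow Q_+^+$.
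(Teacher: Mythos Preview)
The paper does not prove this proposition; it is quoted from the author's earlier work \cite{energy dis} (see the opening sentence of Section~2.3). So there is no proof in the present paper to compare against, and your argument must stand on its own.

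Your derivation of the pointwise rate and of the bound $\|g_-\|_{L^2}^2\le E/\pi$ is correct and is the natural approach: the characteristic ODE for $H(s,t)$, the H\"older splitting against the flux $Q_-^-(s)\lesssim_p E$, and Fatou all work as you wrote. The strategy for the $L^2$ convergence---weak convergence plus convergence of norms---is also sound, and your computation of $L(s_0):=\lim_{t\to-\infty}\|H(\cdot,t)\|_{L^2([s_0,\infty))}^2$ via the triangle law subtracted from Proposition~\ref{infinite triangle} is right. The gap is the last sentence. Your ``parallel flux bookkeeping on $\{r+t'>s_0\}$, treating $g_-$ as incoming data at $t=-\infty$'' is circular as written: to make sense of the energy flux formula on that unbounded region one takes a limit $T\to-\infty$ of the formula on $\{r+t'>s_0,\,t'>T\}$, and the boundary contribution at $t'=T$ is precisely $E_-(T;s_0-T,\infty)$, whose limit is the very quantity you are trying to identify with $\pi\|g_-\|_{L^2([s_0,\infty))}^2$. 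Nothing in the argument pins down the ``data at $-\infty$'' independently.

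Here is one clean way to close the gap within your framework. On any bounded interval $[s_0,s_1]$ the pointwise rate gives $|H(s,t)-g_-(s)|\lesssim_{p,E}(s_0-t)^{-(p-2)/(p+1)}$ uniformly in $s$, hence $\|H(\cdot,t)\|_{L^2([s_0,s_1])}^2\to\|g_-\|_{L^2([s_0,s_1])}^2$. Subtracting this from $\|H(\cdot,t)\|_{L^2([s_0,\infty))}^2\to L(s_0)$ yields $L(s_0)-L(s_1)=\|g_-\|_{L^2([s_0,s_1])}^2$, so the defect $L(s_0)-\|g_-\|_{L^2([s_0,\infty))}^2$ is independent of $s_0$. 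Now send $s_0\to+\infty$ in your explicit formula for $L(s_0)$: the $\xi$-integral and the Morawetz double integral vanish by dominated convergence (both integrands are globally integrable), and $Q_-^-(s_0)\to 0$ by Proposition~\ref{a limit of Q}. Hence $L(s_0)\to 0$, the defect is zero, and norm convergence---therefore strong $L^2([s_0,\infty))$ convergence---follows.
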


\section{Weighted Morawetz}

First of all, the author would like mention that all propositions and decay estimates below still hold even if $\kappa < \kappa_0(p)$, as long as $u$ and $\kappa>0$ satisfy all other conditions in the main theorem. We start by proving a few weighted Morawetz estimates under the assumption that the inward energy decays at a certain rate. 
  
\begin{proposition}[General Weighted Morawetz] \label{Weighted Morawetz Inequality}
 Let $0<\gamma<1$ and $a \in C^1([1,\infty))$ be a positive function satisfying 
 \begin{align*}
  &a(1)=1,& &0<a'(r)\leq \gamma a(r)/r.&
 \end{align*}
 Assume that $u$ is a solution to (CP1) with a finite energy so that $w(r,t)=ru(r,t)$ satisfies 
 \begin{align*}
  K_1 = & \pi \int_0^1 \left(\left|w_r(r,0)+w_t(r,0)\right|^2+\frac{2}{p+1}\cdot\frac{|w(r,0)|^{p+1}}{r^{p-1}}\right) dr \\
  & \quad +\pi \int_1^\infty a(r) \left(\left|w_r(r,0)+w_t(r,0)\right|^2+\frac{2}{p+1}\cdot\frac{|w(r,0)|^{p+1}}{r^{p-1}}\right) dr < \infty,
 \end{align*}
 then we have 
 \begin{align*}
  &\pi \int_1^\infty a(t)|\xi(t)|^2 dt \leq K_1,& &\frac{2\pi(p-1)}{p+1}\iint_{\Omega_1} \frac{a(r+t)|w(r,t)|^{p+1}}{r^p} drdt \leq \frac{p-1}{p-1-2\gamma} K_1,& 
 \end{align*}
 where $\xi$ is the $L^2$ function in the energy flux formula and $\Omega_1 = \{(r,t):r+t>1,t>0,r>0\}$.
\end{proposition}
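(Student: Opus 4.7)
My plan is to run a weighted Green's theorem on the inward flux identity \eqref{energy flux inward}. First I would extend $a$ to $[0,\infty)$ by setting $a(s)=1$ for $s\in [0,1]$, so that the weight $\phi(r,t) := a(\max(r+t,1))$ is Lipschitz, equals $1$ on $\{r+t\leq 1\}$, and agrees with $a(r+t)$ on $\Omega_1$. Writing
\[
P = |w_r+w_t|^2 + \frac{2}{p+1}\cdot\frac{|w|^{p+1}}{r^{p-1}}, \qquad Q = |w_r+w_t|^2 - \frac{2}{p+1}\cdot\frac{|w|^{p+1}}{r^{p-1}},
\]
the identity \eqref{energy flux inward} is equivalent, via Green's theorem, to the pointwise relation $\partial_r Q - \partial_t P = \frac{2(p-1)}{p+1}\cdot\frac{|w|^{p+1}}{r^p}$. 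A direct calculation then gives, on $\Omega_1$,
\[
\partial_r(\phi Q) - \partial_t(\phi P) = \phi\cdot\frac{2(p-1)|w|^{p+1}}{(p+1)r^p} - a'(r+t)\cdot\frac{4|w|^{p+1}}{(p+1)r^{p-1}},
\]
and I would use the hypothesis $a'(s)\leq \gamma a(s)/s$ together with $r\leq r+t$ to obtain the pointwise lower bound
\[
\partial_r(\phi Q) - \partial_t(\phi P) \geq \frac{2(p-1-2\gamma)}{p+1}\cdot\frac{a(r+t)|w|^{p+1}}{r^p},
\]
which is strictly positive since $\gamma<1\leq (p-1)/2$.

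Next I would apply Green's theorem to $\phi P\,dr + \phi Q\,dt$ on the triangle $\Omega_R = \{r,t>0,\ r+t<R\}$ (oriented counterclockwise) and pass to $R\to\infty$ at the end. The mild kink of $\phi$ across the characteristic $r+t=1$ is harmless: one splits $\Omega_R$ into $\Omega_R\cap\{r+t<1\}$ and $\Omega_R\cap\Omega_1$, applies Green's theorem on each piece, and observes that the two line integrals along the shared segment cancel by continuity of $\phi$. The $t$-axis piece of the outer boundary is handled as in Proposition \ref{energy flux formula}, by first restricting to $\{r>\epsilon\}$ and then sending $\epsilon\to 0$; since $\phi$ is bounded and continuous up to the axis, the $r=\epsilon$ line integral converges in this limit to $-\pi\int_0^R \phi(0,t)|\xi(t)|^2\,dt$. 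The remaining two outer pieces contribute, respectively, $\pi\int_0^1 P|_{t=0}\,dr + \pi\int_1^R a(r)P|_{t=0}\,dr \leq K_1$ on the initial slice $\{t=0\}$, and $-a(R)\,Q_-^-(R;0,R)\leq 0$ on the backward characteristic $\{r+t=R\}$.

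Putting these contributions together with the pointwise lower bound on $\partial_r(\phi Q)-\partial_t(\phi P)$ gives
\[
\pi\int_0^R \phi(0,t)|\xi(t)|^2\,dt + \frac{2\pi(p-1-2\gamma)}{p+1}\iint_{\Omega_1^R} \frac{a(r+t)|w|^{p+1}}{r^p}\,dr\,dt \leq K_1,
\]
where $\Omega_1^R = \Omega_1\cap\{r+t<R\}$. Restricting the first integral to $t\geq 1$, where $\phi(0,t)=a(t)$, and letting $R\to\infty$ via monotone convergence yields the bound $\pi\int_1^\infty a(t)|\xi(t)|^2\,dt \leq K_1$, while multiplying the double-integral estimate by $(p-1)/(p-1-2\gamma)$ delivers the advertised weighted $L^{p+1}$ inequality. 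I expect the only real technical care to lie in justifying the $t$-axis boundary term in the weighted setting (handled by the $\epsilon\to 0$ approximation above) and in the bookkeeping across the kink of $\phi$ at $r+t=1$; once the weight $\phi(r,t) = a(\max(r+t,1))$ has been singled out, every other step reduces to an explicit calculation.
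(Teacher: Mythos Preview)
Your approach is correct and is essentially the differential-form version of the paper's argument: you multiply the $1$-form $P\,dr+Q\,dt$ by the weight $\phi(r,t)=a(\max(r+t,1))$ and apply Green's theorem once, whereas the paper applies the unweighted energy flux formula \eqref{energy flux inward} to each slab $\{r+t>s\}$, multiplies the resulting identity by $a'(s)$, and integrates in $s$ from $1$ to $R$. By the layer-cake/Fubini identity these are literally the same computation, and both pivot on the same key pointwise inequality $a'(r+t)\frac{|w|^{p+1}}{r^{p-1}}\leq \frac{\gamma a(r+t)}{r+t}\cdot\frac{|w|^{p+1}}{r^{p-1}}\leq \gamma\,\frac{a(r+t)|w|^{p+1}}{r^{p}}$.

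The one place where the paper's route is cleaner is the $t$-axis term. In your argument you need the \emph{weighted} $r=\epsilon$ line integral to converge to $-\pi\int_0^R \phi(0,t)|\xi(t)|^2\,dt$; your justification (``$\phi$ is bounded and continuous up to the axis'') is morally right but a bit thin, since Proposition~\ref{energy flux formula} is stated only for the unweighted form, and the convergence $Q(\epsilon,t)\,dt\to|\xi(t)|^2\,dt$ is not asserted in any topology that immediately allows multiplication by a variable weight. The paper avoids this entirely: each slab identity already contains the term $\pi\int_s^\infty|\xi|^2\,dt$ from the black box, and integrating against $a'(s)\,ds$ produces $\pi\int_1^\infty(a_R(t)-1)|\xi(t)|^2\,dt$ by Fubini alone. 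If you want to keep your presentation, the simplest rigorous fix is to approximate $\phi$ by a step function constant on characteristic strips $\{s_j<r+t<s_{j+1}\}$, apply Proposition~\ref{energy flux formula} on each strip, sum, and pass to the limit---which is exactly the paper's calculation.
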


 \begin{figure}[h]
 \centering
 \includegraphics[scale=1.1]{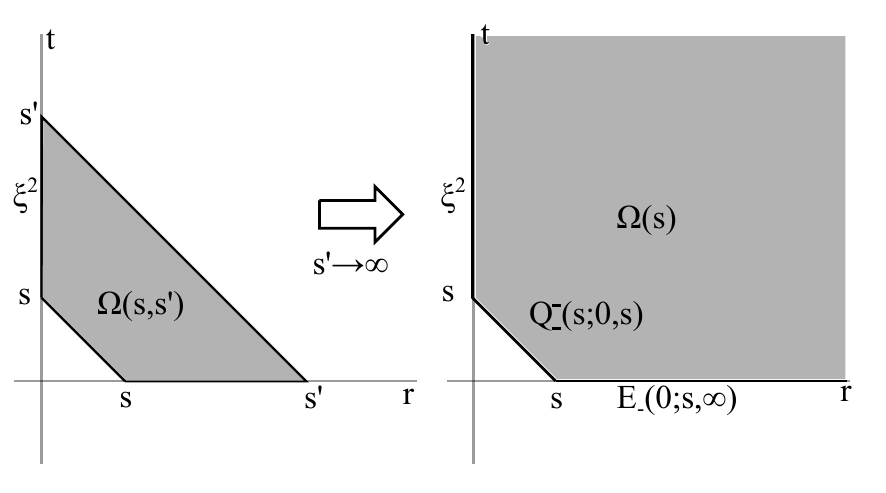}
 \caption{Illustration of regions in the proof of Proposition \ref{Weighted Morawetz Inequality}} \label{figure weightM}
\end{figure}

\begin{proof}
 We apply the energy flux formula of inward energy on the region $\Omega(s,s')=\{(r,t): s<r+t<s', t>0, r>0\}$, as shown in figure \ref{figure weightM}, and obtain ($s<s'$)
 \begin{align*}
    E_-(0,s,s')  - Q_-^-(s',0,s')-\pi \int_{s}^{s'} |\xi(t)|^2 dt+Q_-^-(s; 0,s)  - \frac{2\pi(p-1)}{p+1} \iint_{\Omega(s,s')} \frac{|w|^{p+1}}{r^p} dr dt = 0.
 \end{align*}
 Next we recall $\displaystyle \lim_{s'\rightarrow \infty} Q_-^-(s') = 0$ by Proposition \ref{a limit of Q}, make $s'\rightarrow \infty$ and obtain the energy flux formula for unbounded region $\Omega(s)=\{(r,t), r>0,t>0, r+t>s\}$.
 \[
  E_-(0,s,\infty)  -\pi \int_{s}^{\infty} |\xi(t)|^2 dt+Q_-^-(s; 0,s)  - \frac{2\pi(p-1)}{p+1} \iint_{\Omega(s)} \frac{|w|^{p+1}}{r^p} dr dt = 0.
 \]
 We move the terms with a negative sign to the other side of the identity, then plug in the definitions of $Q$ and $E$.
 \begin{align}
  &\pi \int_s^\infty |\xi(t)|^2 dt + \frac{2\pi(p-1)}{p+1} \iint_{\Omega(s)} \frac{|w(r,t)|^{p+1}}{r^p} dr dt \nonumber\\
  &\quad = \pi \int_s^\infty \left(\left|w_r(r,0)+w_t(r,0)\right|^2+\frac{2}{p+1}\cdot\frac{|w(r,0)|^{p+1}}{r^{p-1}}\right) dr + \frac{4\pi}{p+1} \int_0^s \frac{|w(s-t,t)|^{p+1}}{(s-t)^{p-1}}dt. \label{energy flux for omega s}
 \end{align}
We then multiply both sides by $a'(s)$ and integrate for $s$ from $1$ to $R\gg 1$
\begin{align*}
 & \pi \int_1^\infty (a_R(t)-1) |\xi(t)|^2 dt + \frac{2\pi(p-1)}{p+1} \iint_{\Omega_1} \frac{(a_R(r+t)-1)|w(r,t)|^{p+1}}{r^p} drdt\\
 &\qquad = \pi \int_1^\infty (a_R(r)-1) \left(\left|w_r(r,0)+w_t(r,0)\right|^2+\frac{2}{p+1}\cdot\frac{|w(r,0)|^{p+1}}{r^{p-1}}\right) dr\\
 & \qquad\qquad  + \frac{4\pi}{p+1} \iint_{1<r+t<R} \frac{a'(r+t)|w(r,t)|^{p+1}}{r^{p-1}} drdt.
\end{align*}
Here $a_R$ is a truncated version of $a$ defined by 
\[
 a_R(x) = \left\{\begin{array}{ll} a(x), & \hbox{if}\; 1\leq x<R;\\ a(R), &\hbox{if}\; x\geq R. \end{array}\right.
\]
Next we choose $s=1$ in equation \eqref{energy flux for omega s}, add it to the identity above and obtain 
\begin{align*}
 & \pi \int_1^\infty a_R(t) |\xi(t)|^2 dt + \frac{2\pi(p-1)}{p+1} \iint_{\Omega} \frac{a_R(r+t)|w(r,t)|^{p+1}}{r^p} drdt\\
 &\qquad = \pi \int_1^\infty a_R(r) \left(\left|w_r(r,0)+w_t(r,0)\right|^2+\frac{2}{p+1}\cdot\frac{|w(r,0)|^{p+1}}{r^{p-1}}\right) dr\\
 & \qquad\qquad  + \frac{4\pi}{p+1} \iint_{1<r+t<R} \frac{a'(r+t)|w(r,t)|^{p+1}}{r^{p-1}} drdt + Q_-^-(1;0,1).
\end{align*}
Now we have the key observation by the assumption on the function $a$
\begin{align*}
 \frac{4\pi}{p+1} \iint_{1<r+t<R} \frac{a'(r+t)|w(r,t)|^{p+1}}{r^{p-1}} drdt \leq & \frac{4\pi\gamma}{p+1} \iint_{1<r+t<R} \frac{a(r+t)|w(r,t)|^{p+1}}{(r+t) r^{p-1}} drdt\\
 \leq &  \frac{4\pi\gamma}{p+1} \iint_{\Omega_1} \frac{a_R(r+t)|w(r,t)|^{p+1}}{r^p} drdt.
\end{align*}
This immediately gives us
\begin{align*}
 & \pi \int_1^\infty a_R(t) |\xi(t)|^2 dt + \frac{2\pi(p-1-2\gamma)}{p+1} \iint_{\Omega} \frac{a_R(r+t)|w|^{p+1}}{r^p} drdt\\
 &\quad \leq \pi \int_1^\infty a_R(r) \left(\left|w_r(r,0)+w_t(r,0)\right|^2+\frac{2}{p+1}\cdot\frac{|w(r,0)|^{p+1}}{r^{p-1}}\right) dr + Q_-^-(1;0,1) \leq K_1.
\end{align*}
Please note that $Q_-^-(1,0,1)$ is smaller than inward energy in the ball of radius $1$, i.e. the first term in the definition of $K_1$, by the triangle law Proposition \ref{triangle law}. Finally we can make $R\rightarrow \infty$ and finish our proof.
\end{proof}

\subsection{Decay Rate of Energies}

\begin{corollary}[Weighted Morawetz]\label{weighted Morawetz}
Let $u$ be a solution as in the main theorem and $w=ru$, then we have the following decay estimates
\begin{align*}
  &\int_0^\infty t^{\kappa}|\xi(t)|^2 dt \lesssim_p K,& &\int_0^\infty \int_0^\infty \frac{(r+t)^\kappa |w(r,t)|^{p+1}}{r^p} drdt \lesssim_{p,\kappa} K,& &E_-(t)\lesssim_{p,\kappa} K t^{-\kappa}.&
 \end{align*}
Here $\xi$ is the $L^2$ function in the energy flux formula and the constant $K$ is defined in the main theorem. 
\end{corollary}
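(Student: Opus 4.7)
The plan is to apply the general weighted Morawetz inequality of Proposition \ref{Weighted Morawetz Inequality} with the explicit choice $a(r) = r^\kappa$. Since $a'(r) = \kappa\, a(r)/r$ with $\kappa \in [\kappa_0(p), 1) \subset (0,1)$, the hypothesis of the proposition is satisfied with $\gamma = \kappa$, and the factor $p-1-2\gamma$ that appears in the conclusion is strictly positive throughout our range of $(p,\kappa)$.

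First I would verify that the quantity $K_1$ from Proposition \ref{Weighted Morawetz Inequality} matches (up to a constant) the $K$ of the main theorem. Using radiality, $\nabla u_0\cdot x/|x| + u_0/|x| = w_r(r,0)/r$, so the integrand of $K$ reads
\begin{align*}
 \left|\nabla u_0\cdot \tfrac{x}{|x|} + \tfrac{u_0}{|x|} + u_1\right|^2 + \tfrac{2}{p+1}|u_0|^{p+1} = \frac{|w_r(r,0)+w_t(r,0)|^2}{r^2} + \frac{2}{p+1}\cdot\frac{|w(r,0)|^{p+1}}{r^{p+1}}.
\end{align*}
Passing to spherical coordinates ($dx = 4\pi r^2\,dr$) and splitting at $r=1$ using $\max\{1,r^\kappa\}$, the result is $K = 4\pi K_1$ with $a(r)=r^\kappa$.

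With this identification, Proposition \ref{Weighted Morawetz Inequality} directly yields
\begin{align*}
 \int_1^\infty t^\kappa |\xi(t)|^2\, dt \lesssim_p K \quad \text{and} \quad \iint_{\Omega_1} \frac{(r+t)^\kappa |w(r,t)|^{p+1}}{r^p}\, dr\, dt \lesssim_{p,\kappa} K,
\end{align*}
over the outer region $\Omega_1 = \{r+t>1,\, r,t>0\}$. To recover the full integrals claimed in the corollary, I would apply the triangle law (Proposition \ref{triangle law}) at $t_0=0$, $r_0=1$, which gives
\begin{align*}
 \pi\int_0^1 |\xi(t)|^2\, dt + \frac{2\pi(p-1)}{p+1}\iint_{r+t<1,\, r,t>0} \frac{|w|^{p+1}}{r^p}\, dr\, dt \leq E_-(0;0,1).
\end{align*}
The right-hand side is bounded by the $\int_0^1$ summand in the definition of $K_1$, and since $t^\kappa \leq 1$ on $[0,1]$ and $(r+t)^\kappa\leq 1$ on the inner triangle, the missing pieces of the first two conclusions are controlled by $K$.

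For the pointwise decay $E_-(t)\lesssim K t^{-\kappa}$, the natural tool is Proposition \ref{infinite triangle}, which expresses $E_-(t)$ as the sum of $\pi\int_t^\infty|\xi(t')|^2\, dt'$ and the tail $\frac{2\pi(p-1)}{p+1}\int_t^\infty\int_0^\infty |w|^{p+1}/r^p\, dr\,dt'$. On these integration ranges $(t')^\kappa \geq t^\kappa$ and $(r+t')^\kappa\geq t^\kappa$, so factoring $t^{-\kappa}$ out of each integral and invoking the two global bounds just established gives $E_-(t)\lesssim_{p,\kappa} Kt^{-\kappa}$. I do not anticipate a serious obstacle: Proposition \ref{Weighted Morawetz Inequality} carries the analytic weight, and the only subtlety is the bookkeeping near the origin, which the unweighted triangle law at the initial time handles cleanly because the weights equal $1$ there.
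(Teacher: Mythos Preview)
Your proposal is correct and follows essentially the same route as the paper: apply Proposition \ref{Weighted Morawetz Inequality} with $a(r)=r^\kappa$, identify $K_1$ with a multiple of $K$, patch the region $\{r+t<1\}$ with an unweighted estimate, and deduce $E_-(t)\lesssim Kt^{-\kappa}$ from Proposition \ref{infinite triangle}. Two cosmetic remarks: the correct constant is $K=4K_1$ (the $\pi$ is already built into $K_1$), and the paper patches the inner region by invoking Proposition \ref{infinite triangle} at $t=0$ (which bounds the full unweighted integrals by $E_-(0)\le K_1$) rather than the triangle law, but your use of Proposition \ref{triangle law} is equally valid.
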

\begin{proof}
First of all, we observe the identity($r=|x|$)
\[
 w_r(r,0)+w_t(r,0) = r \partial_r u_0 + u_0 +r u_1 = |x|\left(\nabla u_0(x)\cdot \frac{x}{|x|} + \frac{u_0(x)}{|x|} + u_1(x)\right).
\]
 Thus we can apply Proposition \ref{weighted Morawetz} with $a(r) = r^\kappa$, $\gamma =\kappa$ and 
 \begin{align*}
  K_1 & = \pi \int_0^\infty \max\{1,r^\kappa\} \left(\left|w_r(r,0)+w_t(r,0)\right|^2+\frac{2}{p+1}\cdot\frac{|w(r,0)|^{p+1}}{r^{p-1}}\right) dr\\
  & = \frac{1}{4} \int_{\Rm^3} \max\{1,|x|^\kappa\} \left(\left|\nabla u_0(x)\cdot \frac{x}{|x|} + \frac{u_0(x)}{|x|} + u_1(x)\right|^2 + \frac{2}{p+1}|u_0(x)|^{p+1}\right) dx = \frac{K}{4}.
 \end{align*}
 Our conclusion includes
 \begin{align*}
  &\int_1^\infty t^{\kappa}|\xi(t)|^2 dt \lesssim_p K,& &\iint_{r>0,t>0,r+t>1} \frac{(r+t)^\kappa |w(r,t)|^{p+1}}{r^p} drdt \lesssim_{p,\kappa} K.&
 \end{align*}
 According to Proposition \ref{infinite triangle}, we also have
  \begin{align*}
  &\int_0^\infty |\xi(t)|^2 dt \lesssim_p E_-(0),& &\int_0^\infty \int_0^\infty \frac{|w(r,t)|^{p+1}}{r^p} drdt \lesssim_{p} E_-(0),&
 \end{align*}
 Combining these estimates with the fact $E_-(0) \leq K_1 = K/4$, we obtain the first two inequalities in this corollary. In order to prove the last inequality we apply Proposition \ref{infinite triangle} again
 \begin{align*}
   E_-(t)  & = \pi \int_{t}^{\infty} |\xi(t')|^2 dt' + \frac{2\pi (p-1)}{p+1} \int_{t}^{+\infty} \int_0^{+\infty} \frac{|w(r,t')|^{p+1}}{r^p} dr dt'\\
   & \lesssim_p   t^{-\kappa} \int_t^\infty (t')^{\kappa}|\xi(t')|^2 dt' + t^{-\kappa} \int_{t}^{+\infty} \int_0^{+\infty} \frac{(r+t')^\kappa |w(r,t')|^{p+1}}{r^p} dr dt' \\
   & \lesssim_{p,\kappa} Kt^{-\kappa}.
 \end{align*}
\end{proof}

\begin{remark}
 A careful review of the proof shows that the dependence of the implicit constant in the weighted Morawetz estimates on $p$ and $\kappa$ can be removed as long as $p-1-2\kappa>\delta$ for a fixed positive constant $\delta$.  This may also help us use an absolute constant in many estimates below, unless the pair $(p,\kappa)$ is closed to $(3,1)$. 
\end{remark}

\subsection{Integral of Energy in a Cylinder}

In this subsection we consider the integral of energy $E_\mp(t;0,R)$ with respect to $t$:
\[
 \pi \int_{t_0}^\infty \int_0^R \left(|w_r\pm w_t|^2 +\frac{2}{p+1}\cdot \frac{|w|^{p+1}}{r^{p-1}}\right) dr dt.
\]
This kind of integral has been considered in the classic Morawetz estimate. Roughly speaking, it tells us that the energy can stay in a given ball only for a relatively short time. This will be used to prove the $L^p L^{2p}$ bound if $\kappa>\kappa_0(p)$.  

\begin{proposition} \label{Morawetz energy cylinder}
Let $u$ be a solution to (CP1) as in the main theorem. Then
\begin{align*}
 &\int_{t_0}^\infty E_+(t;0,R) dt \lesssim_p Rt_0^{-\kappa}\varphi(t_0-R) \lesssim_{p,\kappa} KRt_0^{-\kappa},& &t_0\geq R>0;&\\
 &\int_{t_0}^\infty E_-(t;0,R) dt \lesssim_p \quad Rt_0^{-\kappa}\varphi(t_0) \quad \lesssim_{p,\kappa} KRt_0^{-\kappa},& &t_0,R>0.&
\end{align*}
Here the function $\varphi$ is defined below with $\displaystyle \lim_{t'\rightarrow +\infty} \varphi(t') = 0$.
\[
  \varphi(t') = \int_{t'}^\infty t^\kappa |\xi(t)|^2 dt + \int_{t'}^\infty \int_0^\infty \frac{(r+t)^\kappa |w(r,t)|^{p+1}}{r^p} dr dt.
\]
\end{proposition}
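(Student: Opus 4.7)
The plan is to apply the triangle law of Proposition~\ref{triangle law} at each $t \ge t_0$ to decompose $E_\mp(t;0,R)$ into three non-negative pieces --- a $|\xi|^2$-integral over a short time interval, a flux $Q_\mp^\mp$, and a double integral of $|w|^{p+1}/r^p$ over a triangle --- integrate over $t\in[t_0,\infty)$, and reorganize by Fubini into integrals of the weighted Morawetz densities $|\xi|^2$ and $|w|^{p+1}/r^p$. In each piece the $t$-integration produces a geometric time-overlap factor of length $\le R$, which is the source of the $R$ multiplying $t_0^{-\kappa}\varphi(\cdot)$. The weighted Morawetz of Corollary~\ref{weighted Morawetz} then bounds the remaining integrals by $\varphi$.

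For the inward estimate, the triangle law uses the future turnaround time $s\in[t,t+R]$ and the future triangle $\Omega_u(t,R)$, so after Fubini every characteristic-time variable is automatically $\ge t_0$. Hence $(r+\text{time})^\kappa \ge t_0^\kappa$, and pulling the factor $t_0^{-\kappa}$ out of the weighted Morawetz densities --- together with the bound $r \le R$ used to match $|w|^{p+1}/r^{p-1}$ against the Morawetz density $|w|^{p+1}/r^p$ via $|w|^{p+1}/r^{p-1} = r \cdot |w|^{p+1}/r^p$ --- yields $\int_{t_0}^\infty E_-(t;0,R)\,dt \lesssim_p R t_0^{-\kappa}\varphi(t_0)$.

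The outward estimate is subtler and is the main obstacle. The triangle law now traces back in time through $s\in[t-R,t]$ and $\Omega_d(t,R)$, so after Fubini the characteristic-time variable runs over $[t_0-R,\infty)$; a naive argument would produce $(t_0-R)^{-\kappa}$, which diverges as $t_0\to R^+$. The rescue is that in the risky sub-range where the characteristic time lies in $[t_0-R,t_0]$, Fubini does not produce a full factor of $R$ but the vanishing factors $(s+R-t_0)$ in the $\xi$-term, a factor $r$ in the flux $Q_+^+$ (after matching against the Morawetz density), and $(t'+R-r-t_0)$ in the double integral. Each can be absorbed into the desired $R t_0^{-\kappa}$ via the elementary inequalities $(s+R-t_0)t_0^\kappa \le R s^\kappa$, $\ r\, t_0^\kappa \le R(r+s)^\kappa$, and $(t'+R-r-t_0)\, t_0^\kappa \le R(r+t')^\kappa$, all valid for $\kappa\in(0,1)$ and $t_0 \ge R$, and all ultimately following from $\theta^{1/\kappa} \le \theta$ on $[0,1]$ since $1/\kappa \ge 1$. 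Applying the weighted Morawetz over $[t_0-R,\infty)$ then yields $\int_{t_0}^\infty E_+(t;0,R)\,dt \lesssim_p R t_0^{-\kappa}\varphi(t_0-R)$. The final bounds $\varphi(\cdot) \lesssim_{p,\kappa} K$ and $\varphi(t')\to 0$ as $t'\to\infty$ follow from Corollary~\ref{weighted Morawetz} and dominated convergence respectively.
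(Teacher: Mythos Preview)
Your proposal is correct and follows essentially the same route as the paper: apply the triangle law at each time, integrate in $t$, Fubini, and show that the resulting overlap factors are dominated by $R t_0^{-\kappa}$ times the weighted-Morawetz density. The only organizational difference is that the paper first \emph{combines} the flux contribution $r\cdot |w|^{p+1}/r^p$ with the double-integral contribution $(t'-r+R-t_0)\cdot |w|^{p+1}/r^p$ on $\Omega_1$ to get the single coefficient $(t'-t_0+R)$, and then invokes the monotonicity of $(t'-t_0+R)/(t')^\kappa$ in $t'$; you instead bound the three pieces separately via the inequalities $(s+R-t_0)t_0^\kappa \le R s^\kappa$, $r\,t_0^\kappa \le R(r+t')^\kappa$, $(t'+R-r-t_0)t_0^\kappa \le R(r+t')^\kappa$, which amount to the same thing (each reduces to a linear inequality plus $\theta\le\theta^\kappa$ on $[0,1]$).
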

\begin{proof}
 Let us prove the outward case. The inward case can be solved in the same way but the calculation is easier. We start by applying the triangle law on the triangle $\Omega(t) = \{(r,t'): r>0, t'<t, t'-r>t-R\}$ with $t\geq t_0$, as shown in the left half of figure \ref{figure integralEp}.
\begin{align*}
 E_+\left(t;0,R\right) & \lesssim_p \int_{t-R}^{t} |\xi(t')|^2 dt' + \int_{t-R}^{t} \frac{|w(t'-t+R,t')|^{p+1}}{(t'-t+R)^{p-1}}dt'
  + \iint_{\Omega(t)} \frac{|w(r,t')|^{p+1}}{r^p} dr dt'.
\end{align*}

 \begin{figure}[h]
 \centering
 \includegraphics[scale=1.1]{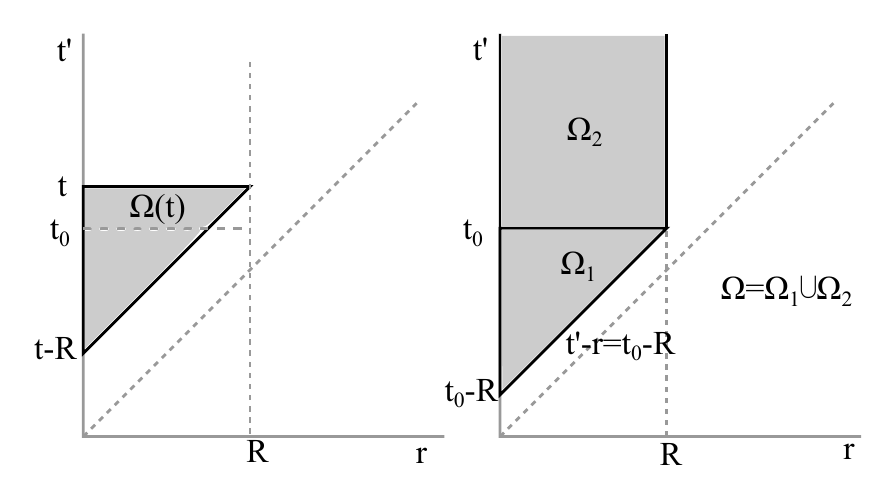}
 \caption{Illustration of proof for Proposition \ref{decay estimate outward}} \label{figure integralEp}
\end{figure}

\noindent We integrate $t$ from $t_0$ to $\infty$ and obtain
\begin{align}
 \int_{t_0}^\infty E_+(t;0,R) dt & \lesssim_p \int_{t_0-R}^{t_0} (t'-t_0+R) |\xi(t')|^2 dt' + \int_{t_0}^{\infty} R |\xi(t')|^2 dt'  + \iint_\Omega \frac{|w(r,t')|^{p+1}}{r^{p-1}} dr dt'\nonumber\\
 & \qquad + \int_{t_0}^\infty \left(\iint_{\Omega(t)} \frac{|w(r,t')|^{p+1}}{r^p} dr dt' \right)dt.\label{integral of E inequality}
\end{align}
The region $\Omega$ is defined as $\Omega = \{(r,t'): 0<r<R, t'-r>t_0-R\}$. We calculate the last term above in details
\begin{align*}
 \int_{t_0}^\infty \left(\iint_{\Omega(t)} \frac{|w(r,t')|^{p+1}}{r^p} dr dt' \right)dt & = \int_{t_0}^\infty \left(\iint_{\Omega} \chi_{\Omega(t)}(r,t') \frac{|w(r,t')|^{p+1}}{r^p} dr dt' \right)dt \\
 & = \iint_{\Omega} \frac{|w(r,t')|^{p+1}}{r^p}\left(\int_{t_0}^\infty \chi_{\Omega(t)}(r,t') dt\right) dr dt'
\end{align*}
The function $\chi_{\Omega(t)}(r,t')$ is the characteristic function of $\Omega(t)$. The value of $\chi_{\Omega(t)}(r,t')=1$ if and only if $t'<t$ and $t'-r>t-R$ both hold. This is equivalent to saying $t\in (t',t'-r+R)$. Thus we have
\[ 
 \int_{t_0}^\infty \chi_{\Omega(t)}(r,t') dt = \left\{\begin{array}{ll} t'-r+R-t_0, & \hbox{if}\; t'<t_0;\\ R-r, & \hbox{if}\; t'\geq t_0.\end{array}\right.
\]
This enable us to write the last term in the right hand of \eqref{integral of E inequality} in details. In summary we have
\begin{align*} 
 \int_{t_0}^\infty E_+(t;0,R) dt & \lesssim_p \int_{t_0-R}^{t_0} (t'-t_0+R) |\xi(t')|^2 dt' + \int_{t_0}^{\infty} R |\xi(t')|^2 dt'  + \iint_\Omega \frac{|w(r,t')|^{p+1}}{r^{p-1}} dr dt'\\
 & +\iint_{\Omega_1} \frac{(t'-r+R-t_0)|w(r,t')|^{p+1}}{r^p} dr dt' + \iint_{\Omega_2} \frac{(R-r)|w(r,t')|^{p+1}}{r^p} dr dt'
\end{align*}
Here we divide the region $\Omega$ into two parts: $\Omega_1 = \{(r,t'): 0<r<R,  t'-r>t_0-R, t'<t_0\}$ and $\Omega_2 = \{(r,t'): 0<r<R, t' \geq t_0\}$, as illustrated in the right half of figure \ref{figure integralEp}. Next we split the integral of $|w|^{p+1}/r^{p-1}$ over $\Omega$ into integrals over $\Omega_1$ and $\Omega_2$, and then combine integrals over the same region together
\begin{align}
 & \int_{t_0}^\infty E_+(t;0,R) dt  \lesssim_p \int_{t_0-R}^{t_0} \frac{t'-t_0+R}{(t')^\kappa} (t')^\kappa |\xi(t')|^2 dt' + \int_{t_0}^{\infty} R |\xi(t')|^2 dt' \nonumber\\
 &\qquad \quad +\iint_{\Omega_1} \frac{t'-t_0+R}{(t'+r)^\kappa}\cdot \frac{(t'+r)^\kappa |w(r,t')|^{p+1}}{r^p} dr dt' + \iint_{\Omega_2} \frac{R|w(r,t')|^{p+1}}{r^p} dr dt'. \label{integral inequality cylinder prepared}
\end{align}
Now we observe the fact that $(t'-t_0+R)/(t')^\kappa$ is an increasing function of $t'>0$. Thus
\[
 \frac{t'-t_0+R}{(t'+r)^\kappa} \leq \frac{t'-t_0+R}{(t')^\kappa} \leq \frac{t_0-t_0+R}{t_0^{\kappa}} = Rt_0^{-\kappa}.
\]
We plug this upper bound in our estimate \eqref{integral inequality cylinder prepared} and obtain
\[
 \int_{t_0}^\infty E_+(t;0,R) dt \lesssim_p Rt_0^{-\kappa} \varphi(t_0-R). 
\] 
Finally we recall weighted Morawetz estimate $\varphi(t_0-R) \lesssim_{p,\kappa} K$ and finish the proof.
\end{proof}

\subsection{A decay estimate of outward energy}

We can also prove a decay estimate on outward energy, but only for local outward energy $E_+(t;0,r)$ with $r<t$. We will not consider the case $r>t$, because in this case $E_+(t,0,r)$ includes a major part of energy, which concentrates around the light cone $r=t$.
\begin{proposition} \label{decay estimate outward}
 Let $u$ be a solution as in the main theorem, we have the following estimate on the outward energy ($0<r<t$)
 \[
  E_+(t;0,r) \lesssim_p \frac{t^{1-\kappa}\varphi\left(\frac{t-r}{2}\right)}{t-r} \lesssim_{p,\kappa} \frac{Kt^{1-\kappa}}{t-r}.
 \]
 Here $\varphi$ is defined in Proposition \ref{Morawetz energy cylinder}. 
\end{proposition}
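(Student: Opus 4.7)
The plan is to combine a time-monotonicity of $E_+(t;0,r)$ with the cylinder estimate of Proposition~\ref{Morawetz energy cylinder}. First I will establish the following monotonicity: for $0 < t \leq t'$ and $r > 0$,
\[
E_+(t;0,r) \leq E_+\bigl(t';\,0,\,t'-t+r\bigr).
\]
To prove this, apply the outward energy flux identity \eqref{energy flux outward} of Proposition~\ref{energy flux formula} to the trapezoidal region
\[
\Omega = \{(r',\tilde t) : t \leq \tilde t \leq t',\ 0 \leq r' \leq \tilde t - t + r\},
\]
whose counterclockwise boundary has four pieces: the bottom $\{\tilde t = t,\ r' \in [0,r]\}$ contributes $E_+(t;0,r)$; the outward characteristic $\tilde t - r' = t - r$ from $(r,t)$ up to $(t'-t+r,t')$, on which only the potential term survives, contributes $Q_+^+(t-r;t,t') \geq 0$; the top $\{\tilde t = t',\ r' \in [0,t'-t+r]\}$ traversed right-to-left contributes $-E_+(t';0,t'-t+r)$; and the segment on the $t$-axis from $(0,t')$ down to $(0,t)$ contributes $+\pi\int_t^{t'}|\xi(\tilde t)|^2\,d\tilde t \geq 0$ by the axis convention attached to \eqref{energy flux outward}. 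Together with the non-negative double integral of $|w|^{p+1}/r^p$ over $\Omega$, the identity rearranges to the claimed monotonicity.

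Since $E_+(t';0,\cdot)$ is nondecreasing in its second argument, this upgrades to $E_+(t;0,r) \leq E_+(t';0,r+s)$ whenever $t'-t \leq s$. I then fix $s = (t-r)/2$, so that $r+s = (t+r)/2 \leq t$, and average over $t' \in [t,t+s]$:
\[
s\cdot E_+(t;0,r) \leq \int_t^{t+s} E_+(t';0,r+s)\,dt' \leq \int_t^{\infty} E_+(t';0,r+s)\,dt'.
\]
The hypothesis $t_0 \geq R$ of Proposition~\ref{Morawetz energy cylinder} with $t_0 = t$ and $R = (t+r)/2$ reduces to $r \leq t$ and holds. Applying that proposition,
\[
E_+(t;0,r) \lesssim_p \frac{r+s}{s}\, t^{-\kappa}\, \varphi\bigl(t-(r+s)\bigr) = \frac{t+r}{t-r}\, t^{-\kappa}\, \varphi\bigl(\tfrac{t-r}{2}\bigr) \lesssim_p \frac{t^{1-\kappa}}{t-r}\, \varphi\bigl(\tfrac{t-r}{2}\bigr),
\]
using $(r+s)/s = (t+r)/(t-r)$, $t-(r+s) = (t-r)/2$, and $t+r < 2t$. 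The second inequality in the proposition then follows from $\varphi(\tau) \lesssim_{p,\kappa} K$ for every $\tau \geq 0$, a consequence of Corollary~\ref{weighted Morawetz}.

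The main technical subtlety is the orientation and sign bookkeeping across the four boundary pieces of $\Omega$; once these are in place, every remaining term in the flux identity is manifestly non-negative, so the monotonicity is clean with no cancellations to track. The choice $s = (t-r)/2$ is then forced by matching the argument of $\varphi$ to $(t-r)/2$ in the target statement while keeping $(r+s)/s$ comparable to $t/(t-r)$; a smaller $s$ would inflate $(r+s)/s$, while a larger $s$ would either push the $\varphi$-argument below zero or violate the cylinder-estimate hypothesis $t \geq r+s$.
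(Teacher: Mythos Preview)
Your proof is correct and follows essentially the same route as the paper's: the paper also applies the outward energy flux identity on the same trapezoidal region to obtain $E_+(t;0,r)\leq E_+(t';0,t'-t+r)\leq E_+(t';0,\tfrac{t+r}{2})$ for $t<t'<t+\tfrac{t-r}{2}$, integrates in $t'$ over that interval, and invokes Proposition~\ref{Morawetz energy cylinder} with $R=\tfrac{t+r}{2}$ and $t_0=t$. Your write-up is slightly more explicit about the boundary bookkeeping, but the argument is the same.
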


\begin{proof} Given any $t'>t$, we may apply the energy flux formula on the region $\Omega(t')=\{(\bar{r},\bar{t}): t<\bar{t}<t', 0<\bar{r}<r+\bar{t}-t\}$, as illustrated in figure \ref{figure upperEp},  and write
\[
 E_+(t';0,t'-t+r) = E_+(t;0,r) + \hbox{three nonnegative terms}.
\]
Therefore we always have 
\begin{equation}
 E_+(t;0,r) \leq E_+(t';0,t'-t+r) \leq E_+\left(t';0,\frac{t+r}{2}\right), \quad t<t'<t+\frac{t-r}{2} \label{upper bd}
\end{equation}
We may integrate \eqref{upper bd} for $t'\in (t,t+\frac{t-r}{2})$, apply Proposition \ref{Morawetz energy cylinder} and obtain
\begin{align*}
 \frac{t-r}{2}\cdot E_+(t;0,r) & \leq \int_{t}^{t+\frac{t-r}{2}} \!\!E_+\left(t';0,\frac{t+r}{2}\right)dt' \leq \int_t^{\infty} E_+\left(t',0,\frac{t+r}{2}\right) dt' \\
 & \lesssim_p \frac{t+r}{2}\cdot t^{-\kappa}\varphi\left(\frac{t-r}{2}\right)\lesssim_p t^{1-\kappa} \varphi\left(\frac{t-r}{2}\right)\lesssim_{p,\kappa} K t^{1-\kappa}. 
\end{align*}
This finishes the proof.

 \begin{figure}[h]
 \centering
 \includegraphics[scale=1.1]{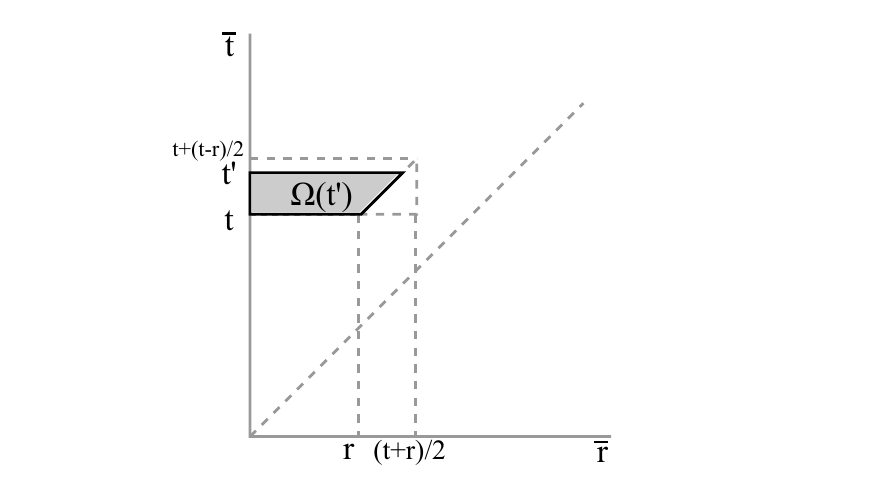}
 \caption{Illustration of proof for Proposition \ref{decay estimate outward}} \label{figure upperEp}
\end{figure}
\end{proof}

\section{Proof of Main Theorem}

In this section we prove our main theorem in details.

\subsection{Proof of Scattering Part}
If the solution did not scatter, we would have the following limit for any fixed $\beta < \frac{2(p-2)}{p+1} = 1 - \kappa_0(p)$, according to part (c) of Theorem \ref{energy distribution}.
\begin{equation} \label{location of retarded energy}
 \lim_{t\rightarrow \infty} E(t;t/2,t-t^\beta) >0.
\end{equation}
We need to show a contradiction. Let us start by
\begin{proposition} \label{decay of outward energy}
 Let $u$ be a solution as in the main theorem. Given any $c>0$, we have
 \[
  \lim_{t \rightarrow +\infty} E(t;0,t-ct^{1-\kappa}) = 0.
 \]
\end{proposition}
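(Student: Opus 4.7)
The plan is to split $E(t;0,r) = E_-(t;0,r) + E_+(t;0,r)$ with $r = t - ct^{1-\kappa}$ (legitimate once $t$ is large enough that $t > ct^{1-\kappa}$, which holds for $t > c^{1/\kappa}$), and control each piece using the estimates already established in the weighted Morawetz section. The inward truncated energy is trivially bounded by the total inward energy, so $E_-(t;0,r) \leq E_-(t) \lesssim_{p,\kappa} K t^{-\kappa}$ by Corollary \ref{weighted Morawetz}, and this tends to $0$ since $\kappa > 0$.

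For the outward part, the natural tool is Proposition \ref{decay estimate outward}, which gives
\[
 E_+(t;0,r) \lesssim_p \frac{t^{1-\kappa}\varphi\bigl((t-r)/2\bigr)}{t-r}.
\]
With $t-r = c t^{1-\kappa}$ this simplifies to $E_+(t;0,r) \lesssim_p c^{-1}\varphi(c t^{1-\kappa}/2)$, where the explicit powers of $t$ cancel exactly. The key observation is that one must use the sharper bound involving $\varphi$ rather than the crude bound by $K$: substituting $K$ alone would only yield boundedness of order $K/c$, not decay. Since $\kappa < 1$, the argument $c t^{1-\kappa}/2 \to +\infty$ as $t \to \infty$, and Proposition \ref{Morawetz energy cylinder} gives $\lim_{t'\to\infty}\varphi(t') = 0$, so the outward contribution also tends to $0$.

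The proposition then follows by adding the two limits. There is no serious obstacle: the whole proof is a bookkeeping matter of choosing the right decomposition $r = t - c t^{1-\kappa}$ so that $t-r$ matches the scale that makes Proposition \ref{decay estimate outward} give genuine decay rather than mere boundedness. The exponent $1-\kappa$ is exactly the critical rate: any slower shrinkage of $t-r$ (e.g. $t-r = c t^\alpha$ with $\alpha > 1-\kappa$) still works, while $t-r$ of lower order than $t^{1-\kappa}$ would not be enough to beat the $t^{1-\kappa}$ numerator.
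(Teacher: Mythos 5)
Your proof is correct and follows essentially the same route as the paper: the inward part is handled by the decay $E_-(t)\lesssim_{p,\kappa}Kt^{-\kappa}$ (the paper just invokes $E_-(t)\to 0$), and the outward part is exactly the paper's application of Proposition \ref{decay estimate outward} with $t-r=ct^{1-\kappa}$, yielding $\frac{1}{c}\varphi(ct^{1-\kappa}/2)\to 0$ via $\varphi(t')\to 0$ and $\kappa<1$. Your added remarks on why the $\varphi$-bound (rather than the crude $K$-bound) is essential are accurate commentary, not a deviation.
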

\begin{proof}
 Since we know the inward energy converges to zero as $t\rightarrow +\infty$. This is sufficient to show
 \[
  \lim_{t \rightarrow +\infty} E_+(t;0,t-ct^{1-\kappa}) = 0.
 \]
 It immediately follows Proposition \ref{decay estimate outward}
 \[
  E_+(t;0,t-ct^{1-\kappa}) \lesssim_p \frac{t^{1-\kappa}\varphi\left(ct^{1-\kappa}/2\right)}{ct^{1-\kappa}} = \frac{1}{c} \varphi(ct^{1-\kappa}/2) \rightarrow 0.
 \]
\end{proof}
\paragraph{Proof of scattering} Now we are ready to prove part (a) of the main theorem. If $\kappa > \kappa_0(p)$, we can choose $\beta \in (1-\kappa,\frac{2(p-2)}{p+1})$. Thus we have $0<t/2<t-t^\beta <t-ct^{1-\kappa}$ for sufficiently large $t$. As a result, the conclusion of Proposition \ref{decay of outward energy} contradicts with \eqref{location of retarded energy}.
On the other hand, if $\kappa=\kappa_0(p)$, we have 
\begin{align*}
 E(t;t/2,t-t^\beta) & \leq E(t;0,t-ct^{1-\kappa_0}) + E_+(t;t-ct^{1-\kappa_0}, t-t^\beta) + E_-(t;t-ct^{1-\kappa_0}, t-t^\beta)\\
 & \leq E(t;0,t-ct^{1-\kappa_0}) + \pi \int_{t-ct^{1-\kappa_0}}^{t-t^\beta}|w_r-w_t|^2 dr + 2E_-(t;t-ct^{1-\kappa_0}, t-t^\beta)
\end{align*}
Here $c$ is an arbitrary small constant. The first and last term above converge to zero as $t\rightarrow +\infty$ by Proposition \ref{decay of outward energy} and the decay estimate $E_-(t) \lesssim_{p,\kappa} K  t^{-\kappa}$, respectively. Proposition \ref{limits of wr pm wt} gives an upper bound of the second term above
\begin{align*}
  \pi \int_{t-ct^{1-\kappa_0}}^{t-t^\beta}|w_r-w_t|^2 dr & = \pi \int_{t^\beta}^{ct^{1-\kappa_0}} \left|(w_r-w_t)(t-\tau, t)\right|^2 d\tau\\
  & \lesssim_1 \int_{t^\beta}^{ct^{1-\kappa_0}} \left|(w_r-w_t)(t-\tau, t)-g_+(\tau)\right|^2 d\tau + \int_{t^\beta}^{ct^{1-\kappa_0}}|g_+(\tau)|^2 d\tau\\
  & \lesssim_{p,E} \int_{t^\beta}^{ct^{1-\kappa_0}} \left[(t-\tau)^{-\frac{p-2}{p+1}}\right]^2 d\tau + \int_{t^\beta}^{ct^{1-\kappa_0}} |g_+(\tau)|^2 d\tau\\
  & \lesssim_{p,E} c + \int_{t^\beta}^{ct^{1-\kappa_0}} |g_+(\tau)|^2 d\tau.
\end{align*}
In summary, we have for any $c>0$
\[
 \limsup_{t\rightarrow \infty} E(t;t/2,t-t^\beta) \lesssim_{p,E} c \quad \Rightarrow \quad \limsup_{t\rightarrow \infty} E(t;t/2,t-t^\beta) = 0.
\] 
This contradicts with \eqref{location of retarded energy} and finishes the proof.

\subsection{Proof of Finite $L^p L^{2p}$ Norm}
Throughout this subsection, we assume $u$ is a solution as in the main theorem. The decay rate satisfies $\kappa>\kappa_0(p)$. First of all, let us recall the local theory
\begin{lemma} \label{local theory}
 Let $(u_0,u_1) \in \dot{H}^1 \times L^2(\Rm^3)$ be initial data. Then the Cauchy problem (CP1) has a unique solution $u$ in the time interval $[0,T]$ with 
 \begin{align*}
   &(u,u_t) \in C([0,T];\dot{H}^1 \times L^2(\Rm^3));& &u \in L^{\frac{2p}{p-3}}L^{2p} ([0,T]\times \Rm^3).&
 \end{align*}
 Here the minimal time length of existence $T = C_p \|(u_0,u_1)\|_{\dot{H}^1\times L^2}^{\frac{-2(p-1)}{5-p}}$. In addition, we have 
 \[
  \|u\|_{L^{\frac{2p}{p-3}}L^{2p} ([0,T]\times \Rm^3)} \lesssim_p \|(u_0,u_1)\|_{\dot{H}^1\times L^2}.
 \]
\end{lemma}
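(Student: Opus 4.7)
The plan is a standard contraction mapping argument anchored on an energy-level Strichartz estimate. First I would observe that the pair $(q,r) = (\frac{2p}{p-3}, 2p)$, with the convention $q = \infty$ at $p = 3$, is wave-admissible at regularity $s = 1$ in three dimensions: the scaling identity $\frac{1}{q} + \frac{3}{r} = \frac{p-3}{2p} + \frac{3}{2p} = \frac{1}{2}$ holds, and the admissibility inequality $\frac{2}{q} + \frac{2}{r} = \frac{p-2}{p} \leq 1$ is valid for every $p \in [3,5)$. Consequently the standard Strichartz estimate gives, for any $F \in L^1_t L^2_x$ and the corresponding solution of $\partial_t^2 u - \Delta u = F$ with the stated data,
\[
 \|u\|_{L^q_t L^r_x([0,T])} + \bigl\|(u,u_t)\bigr\|_{C([0,T];\,\dot{H}^1 \times L^2)} \lesssim_p \|(u_0,u_1)\|_{\dot{H}^1 \times L^2} + \|F\|_{L^1_t L^2_x([0,T])}.
\]

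Set $A = \|(u_0,u_1)\|_{\dot{H}^1 \times L^2}$ and $X_T = L^q([0,T]; L^r(\Rm^3))$. I would apply the contraction mapping theorem to the Duhamel map
\[
 \Phi(u)(t) = \mathbf{S}_L(t)(u_0,u_1) - \int_0^t \frac{\sin\bigl((t-s)\sqrt{-\Delta}\bigr)}{\sqrt{-\Delta}}\bigl(|u|^{p-1}u\bigr)(s)\,ds
\]
on the closed ball of radius $2C_p A$ in $X_T$. The essential nonlinear estimate comes from H\"older in space, $\bigl\||u|^{p-1}u\bigr\|_{L^2_x} = \|u\|_{L^{2p}_x}^p$, followed by H\"older in time with the exponent relation $\frac{1}{p} = \frac{p-3}{2p} + \frac{5-p}{2p}$ (for $p = 3$ this step degenerates and must be replaced by Sobolev embedding $\dot{H}^1 \hookrightarrow L^6$ controlling $\|u\|_{L^\infty_t L^6_x}$), yielding
\[
 \bigl\||u|^{p-1}u\bigr\|_{L^1_t L^2_x([0,T])} \leq T^{(5-p)/2}\,\|u\|_{X_T}^{\,p}.
\]
This is the engine driving the lifespan, since the exponent $\frac{5-p}{2}$ is positive exactly in the subcritical range. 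Feeding this back into Strichartz gives $\|\Phi(u)\|_{X_T} \leq C_p A + C_p T^{(5-p)/2}\|u\|_{X_T}^{\,p}$, so the ball of radius $2C_p A$ is preserved as soon as $C_p T^{(5-p)/2}(2C_p A)^{p-1} \leq \frac{1}{2}$, i.e.\ $T \leq C_p' A^{-2(p-1)/(5-p)}$, which is exactly the claimed lifespan. The contraction property follows from the pointwise bound $\bigl||u|^{p-1}u - |v|^{p-1}v\bigr| \lesssim_p (|u|^{p-1} + |v|^{p-1})|u - v|$ combined with the same two-step H\"older chain on $X_T \times X_T$.

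With $T$ thus chosen, the fixed point $u$ is the unique solution inside the ball, and the bound $\|u\|_{X_T} \leq 2C_p A$ is precisely the Strichartz estimate claimed in the lemma; the continuity $(u,u_t) \in C([0,T]; \dot{H}^1 \times L^2)$ comes from the homogeneous term of the Strichartz inequality applied with $F = -|u|^{p-1}u \in L^1_t L^2_x$. I do not foresee any substantive obstacle; this is textbook subcritical local well-posedness, and the only slightly delicate point is the endpoint $p = 3$ already mentioned, where the algebra is unchanged and the argument recovers $T \lesssim_p A^{-2}$, matching $-2(p-1)/(5-p)\big|_{p=3} = -2$.
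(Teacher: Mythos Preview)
Your proposal is correct and follows essentially the same route as the paper: a contraction in $L^{\frac{2p}{p-3}}_t L^{2p}_x$ based on the energy-level Strichartz estimate, with the key $T^{(5-p)/2}$ factor coming from H\"older in time on the nonlinearity. You are simply more explicit than the paper about verifying the admissibility of the pair $(\tfrac{2p}{p-3},2p)$ and the H\"older exponent bookkeeping, and you additionally flag the $p=3$ endpoint, but the argument is the same.
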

\begin{proof}
 Let us consider the operator $\mathbf{T}$ defined for $ u \in L^{\frac{2p}{p-3}} L^{2p} ([0,T]\times \Rm^3)$
 \[
  \mathbf{T} u = \mathbf{S}(t) (u_0,u_1) - \int_0^t \frac{\sin(t-s)\sqrt{-\Delta}}{\sqrt{-\Delta}} |u(\cdot, s)|^{p-1} u(\cdot, s) ds
 \]
 and apply a standard fixed-point argument. By the classic Strichartz estimate given in \cite{strichartz}, we have 
 \begin{align*}
  \|\mathbf{T} u\|_{L^{\frac{2p}{p-3}}L^{2p} ([0,T]\times \Rm^3)} & \leq C\|(u_0,u_1)\|_{\dot{H}^1\times L^2} + C \||u|^{p-1}u\|_{L^1 L^2([0,T]\times \Rm^3)}\\
  & \leq C\|(u_0,u_1)\|_{\dot{H}^1\times L^2} + C T^{\frac{5-p}{2}}\|u\|_{L^{\frac{2p}{p-3}}L^{2p} ([0,T]\times \Rm^3)}^p;
 \end{align*}
 \begin{align*}
  \|\mathbf{T} u - \mathbf{T} v\|_{L^{\frac{2p}{p-3}}L^{2p} ([0,T]\times \Rm^3)} & \leq C\||v|^{p-1}v- |u|^{p-1}u\|_{L^1 L^2([0,T]\times \Rm^3)} \\
  & \leq CT^{\frac{5-p}{2}} \|u-v\|_{L^{\frac{2p}{p-3}}L^{2p}} \left(\|u\|_{L^{\frac{2p}{p-3}}L^{2p}}^{p-1} + \|v\|_{L^{\frac{2p}{p-3}}L^{2p}}^{p-1}  \right).
 \end{align*}
 When $T \leq C_p \|(u_0,u_1)\|_{\dot{H}^1\times L^2}^{\frac{-2(p-1)}{5-p}}$ with a small constant $C_p$, the operator $\mathbf{T}$ becomes a contraction map from the metric space $\{u: \|u\|_{L^{\frac{2p}{p-3}}L^{2p} ([0,T]\times \Rm^3)}\leq  2C\|(u_0,u_1)\|_{\dot{H}^1\times L^2} \} $ to itself. A standard fixed-point argument immediately prove this lemma. 
\end{proof}
\begin{remark}
 This local theory is much different from the local theory with initial data in the critical space. In the critical case, unless the initial data is small, the minimal time of existence given by local theory depends on not only the size, but also the profile of initial data. Please see, for instance, \cite{ls} for more details.
\end{remark}
\paragraph{Small time part} Since we know
\[
 \|(u(\cdot,t),u_t(\cdot,t))\|_{\dot{H}^1\times L^2} \lesssim_1 E^{1/2}
\]
is uniformly bounded, we may split the time interval $[0,2]$ into finite small intervals, so that in each small interval Lemma \ref{local theory} applies. This gives
\[
 \|u\|_{L^p L^{2p}([0,2]\times \Rm^3)} \lesssim_p \|u\|_{L^\frac{2p}{p-3} L^{2p}([0,2]\times \Rm^3)} < +\infty.
\]
\paragraph{Large time part} Now we need to deal with the case $t>2$. We write the solution $u$ as a sum of three parts 
\[
 u(x,t)  = u(x,t) \chi_{|x|>t/4}(x,t) + u(x,t) \chi_{1/4 \leq |x| \leq t/4}(x,t) + u(x,t) \chi_{|x|<1/4}(x,t).
\]
Here $\chi$'s are characteristic functions of the indicated regions. We call these three parts large radius, medium radius and small radius parts, respectively. 

\paragraph{Large radius part} Let $t > 2$ and $|x|>t/4$. This is the major part, as we mentioned in the introduction. By Lemma \ref{local upper bound pointwise} and the decay estimate of inward energy $E_-(t) \lesssim_{p,\kappa} Kt^{-\kappa}$, we have
\[
 |w(|x|,t)| \lesssim_{p,\kappa} |x|^\frac{p-1}{p+3} E^{\frac{1}{p+3}} (Kt^{-\kappa})^\frac{1}{p+3}\Rightarrow |u(x,t)| \lesssim_{p,\kappa,E,K} |x|^{-\frac{4}{p+3}} t^{-\frac{\kappa}{p+3}} 
\]
Therefore we have 
\begin{align*}
 \int_{|x|>t/4} |u(x,t)|^{2p} dx &\lesssim_{p,\kappa,E,K} \int_{|x|>t/4} |u(x,t)|^{p+1} \left(|x|^{-\frac{4}{p+3}} t^{-\frac{\kappa}{p+3}}\right)^{p-1} dx\\
 &\lesssim_{p,\kappa,E,K} \int_{|x|>t/4} t^\kappa |u(x,t)|^{p+1} \cdot |x|^{-\frac{4(p-1)}{p+3}} t^{-\frac{\kappa(2p+2)}{p+3}} dx\\
 &\lesssim_{p,\kappa,E,K} t^{-\frac{4(p-1)+\kappa(2p+2)}{p+3}}.
 \end{align*}
 Here we use the estimate $\int_{\Rm^3} t^\kappa |u(x,t)|^{p+1} dx \lesssim_1 t^\kappa E_-(t)\lesssim_{p,\kappa} K$. As a result, we have ($t_0>1$) 
 \begin{align}
  \|u \chi_{|x|>t/4}\|_{L^p L^{2p} ([t_0,\infty)\times \Rm^3)}^p &= \int_{t_0}^\infty \left(\int_{|x|>t/4} |u(x,t)|^{2p} dx\right)^{1/2} dt\nonumber \\
  &\lesssim_{p,\kappa,E,K} \int_{t_0}^\infty  t^{-\frac{2(p-1)+\kappa(p+1)}{p+3}} dt\nonumber \\
  &\lesssim_{p,\kappa,E,K} t_0^{-\frac{\kappa(p+1)+(p-5)}{p+3}} = t_0^{-\frac{p+1}{p+3}(\kappa-\kappa_0(p))}. \label{large radius part}
 \end{align}
 Therefore we have 
 \[
   \|u \chi_{|x|>t/4}\|_{L^p L^{2p} ([2,\infty)\times \Rm^3)} < + \infty.
 \]
 
\paragraph{Medium radius part} We need to use the following lemma

\begin{lemma} \label{piece estimate}
 Assume that $0\leq \alpha<\beta\leq 1$ are constants. Let $t_0 \geq 1$ be a time. Then
 \[
  \|u\|_{L^p L^{2p} ([t_0,2t_0]\times \{x\in \Rm^3: t_0^\alpha/4 \leq |x| \leq t_0^\beta/2\})}^p \lesssim_{p,\kappa} K^{\frac{2p}{p+3}} t_0^{\beta-\frac{(5p-9)}{2(p+3)}\alpha-\frac{2p}{p+3}\kappa}.
 \]
\end{lemma}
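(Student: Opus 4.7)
The strategy is to combine the pointwise upper bound of Lemma \ref{local upper bound pointwise} with the cylindrical energy estimate of Proposition \ref{Morawetz energy cylinder} and an application of H\"older's inequality in the time variable. Throughout I set $r_1 = t_0^\alpha/4$ and $r_0 = t_0^\beta/2$, so the spatial region is the annulus $\{r_1 \le |x| \le r_0\}$.

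First I would raise the pointwise estimate of Lemma \ref{local upper bound pointwise} to the power $2p$, invoke the decay $E_-(t;0,|x|) \le E_-(t) \lesssim_{p,\kappa} K t^{-\kappa}$ from Corollary \ref{weighted Morawetz}, and use monotonicity of $E(t;0,r)$ in $r$, obtaining
\[
|u(x,t)|^{2p} \lesssim_{p,\kappa} |x|^{-\frac{8p}{p+3}}\, E(t;0,r_0)^{\frac{2p}{p+3}}\, K^{\frac{2p}{p+3}}\, t^{-\frac{2p\kappa}{p+3}}.
\]
Integrating this bound over the annulus reduces the spatial part to $\int_{r_1}^{r_0} r^{-(6p-6)/(p+3)}\,dr$. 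Since for $p\ge 3$ the exponent is strictly less than $-2$, the integral is dominated by its lower endpoint and is of order $r_1^{-(5p-9)/(p+3)} \sim t_0^{-\alpha(5p-9)/(p+3)}$. Taking a square root then gives
\[
\|u(\cdot,t)\|_{L^{2p}(\mathrm{annulus})}^p \lesssim_{p,\kappa} t_0^{-\frac{\alpha(5p-9)}{2(p+3)}}\, K^{\frac{p}{p+3}}\, t^{-\frac{p\kappa}{p+3}}\, E(t;0,r_0)^{\frac{p}{p+3}}.
\]

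Next I would integrate in $t$ over $[t_0,2t_0]$ and absorb $t^{-p\kappa/(p+3)}$ into $t_0^{-p\kappa/(p+3)}$. The remaining task is to estimate $\int_{t_0}^{2t_0} E(t;0,r_0)^{p/(p+3)}\,dt$. Here I would apply H\"older's inequality with conjugate exponents $(p+3)/p$ and $(p+3)/3$, together with the cylindrical integral estimate
\[
\int_{t_0}^\infty E(t;0,r_0)\,dt \lesssim_{p,\kappa} K r_0\, t_0^{-\kappa} \lesssim K\, t_0^{\beta-\kappa}
\]
from Proposition \ref{Morawetz energy cylinder} (valid since $r_0 \le t_0$). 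Assembling the various powers of $K$ and $t_0$ then yields the claimed inequality.

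The main technical point is this last H\"older step, where the cylinder-integrated energy bound of Proposition \ref{Morawetz energy cylinder} is the only ingredient that produces the $r_0 \sim t_0^\beta$ improvement and must be exploited precisely, rather than simply using the pointwise bound $E(t;0,r_0)\lesssim K t_0^{-\kappa}$ (which follows from Propositions \ref{monotonicity of energies} and \ref{decay estimate outward}). The decay estimate $E_-(t) \lesssim K t^{-\kappa}$ from the weighted Morawetz contributes the $t_0^{-2p\kappa/(p+3)}$ factor, while Lemma \ref{local upper bound pointwise} is what converts the energy quantities into $L^{2p}$ bounds at the pointwise level; the careful bookkeeping of these three ingredients is the heart of the proof.
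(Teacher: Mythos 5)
Your two key ingredients --- the pointwise bound of Lemma \ref{local upper bound pointwise} and the cylinder estimate of Proposition \ref{Morawetz energy cylinder} --- are exactly the ones the paper uses, but your bookkeeping of the powers of the local energy does not close: the exponent your argument actually produces is strictly worse than the claimed one whenever $\beta<1$. Concretely, after your H\"older step the time integral contributes
\[
\Bigl(\int_{t_0}^{2t_0} E(t;0,r_0)\,dt\Bigr)^{\frac{p}{p+3}} t_0^{\frac{3}{p+3}} \lesssim_{p,\kappa} K^{\frac{p}{p+3}}\, t_0^{\frac{p(\beta-\kappa)+3}{p+3}},
\]
so assembling all factors gives the exponent $\frac{p\beta+3}{p+3}-\frac{5p-9}{2(p+3)}\alpha-\frac{2p}{p+3}\kappa$, which exceeds the claimed exponent $\beta-\frac{5p-9}{2(p+3)}\alpha-\frac{2p}{p+3}\kappa$ by $\frac{3(1-\beta)}{p+3}\geq 0$. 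The loss is structural, not a computational slip you can absorb: you spend $2p/(p+3)$ of the total power $E^{4p/(p+3)}$ on the pointwise decay $E_-\lesssim Kt^{-\kappa}$ (which, integrated over a time interval of length $t_0$, yields only $t_0^{1-\kappa}$ per unit power) and keep only $E^{p/(p+3)}<E^1$ after the square root for the integral estimate (which yields the better $t_0^{\beta-\kappa}$ per unit power). H\"older cannot repair this, since raising $E^{p/(p+3)}$ back up to $E^{1}$ would require a pointwise \emph{lower} bound on $E$.

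The paper's proof distributes the powers the other way: bound both factors in Lemma \ref{local upper bound pointwise} by the full local energy to get $|u(x,t)|^{2p}\lesssim_p |x|^{-\frac{8p}{p+3}}E(t;0,t_0^\beta/2)^{\frac{4p}{p+3}}$, then peel off pointwise only the excess over $2$, namely $E^{\frac{2(p-3)}{p+3}}\lesssim_{p,\kappa}(Kt_0^{-\kappa})^{\frac{2(p-3)}{p+3}}$, keeping exactly $E(t;0,t_0^\beta/2)^2$ inside the spatial integral. After the square root the time integrand is $E(t;0,t_0^\beta/2)$ to the first power, Proposition \ref{Morawetz energy cylinder} applies directly with no H\"older, and the full gain $\int_{t_0}^{2t_0}E(t;0,t_0^\beta/2)\,dt\lesssim_{p,\kappa}Kt_0^{\beta-\kappa}$ is realized; the powers of $K$ and $t_0$ then combine to exactly the claimed $K^{\frac{2p}{p+3}}t_0^{\beta-\frac{5p-9}{2(p+3)}\alpha-\frac{2p}{p+3}\kappa}$. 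With this redistribution the rest of your argument goes through verbatim. (A minor additional point: the spatial exponent $-(6p-6)/(p+3)$ equals $-2$ at $p=3$ rather than being strictly less than $-2$, but lower-endpoint domination of the radial integral still holds there, so that step is harmless.)
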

\begin{proof}
 We recall the pointwise estimates on $w$ given in Lemma \ref{local upper bound pointwise} for $r\in (0,t_0^\beta/2)$ and $t \in (t_0,2t_0)$
\[
 |w(r,t)|  \lesssim_p r^{\frac{p-1}{p+3}}\left(\int_0^r |w_r(s,t)|^2 ds\right)^{\frac{1}{p+3}} \left(\int_0^r \frac{|w(s,t)|^{p+1}}{s^{p-1}} ds\right)^{\frac{1}{p+3}}
 \lesssim_p r^{\frac{p-1}{p+3}} E(t;0,t_0^\beta/2)^{\frac{2}{p+3}}.
\]
Since $w=ru$, we have the following estimate
\[
  |u(x,t)|  \lesssim_p |x|^{-\frac{4}{p+3}} E(t;0,t_0^\beta/2)^{\frac{2}{p+3}}.
\]
The combination of Corollary \ref{weighted Morawetz} and Proposition \ref{decay estimate outward} gives an estimate $E(t;0,t_0^\beta/2) \lesssim_{p,\kappa} K t_0^{-\kappa}$. As a result, we have 
\[
 |u(x,t)|^{2p} \lesssim_{p} |x|^{-\frac{8p}{p+3}} E(t;0,t_0^\beta/2)^{\frac{4p}{p+3}}
 \lesssim_{p,\kappa}  |x|^{-\frac{8p}{p+3}} K^{\frac{2(p-3)}{p+3}} t_0^{-\frac{2(p-3)}{p+3}\kappa} E(t;0,t_0^\beta/2)^2.
\]
We integrate in $x$,
\begin{align*}
 \int_{t_0^\alpha/4\leq |x| \leq t_0^\beta/2} |u(x,t)|^{2p} dx \lesssim_{p,\kappa} K^{\frac{2(p-3)}{p+3}} t_0^{-\frac{5p-9}{p+3}\alpha-\frac{2(p-3)}{p+3}\kappa} E(t;0,t_0^\beta/2)^2
\end{align*}
Therefore we have 
\begin{align*}
 \|u\|_{L^p L^{2p} ([t_0,2t_0]\times \{x\in \Rm^3: t_0^\alpha/4 \leq |x| \leq t_0^\beta/2\})}^p & = \int_{t_0}^{2t_0}\left(\int_{t_0^\alpha/4\leq |x| \leq t_0^\beta/2} |u(x,t)|^{2p} dx\right)^{1/2} dt \\
 & \lesssim_{p,\kappa} K^{\frac{p-3}{p+3}} t_0^{-\frac{(5p-9)}{2(p+3)}\alpha-\frac{p-3}{p+3}\kappa} \int_{t_0}^{2t_0}  E(t;0,t_0^\beta/2) dt\\
 & \lesssim_{p,\kappa} K^{\frac{2p}{p+3}} t_0^{\beta-\frac{(5p-9)}{2(p+3)}\alpha-\frac{2p}{p+3}\kappa} 
\end{align*}
In the final step, we use Proposition \ref{Morawetz energy cylinder}.
\end{proof}
\noindent we can rewrite the exponent of $t_0$ above by
\begin{align*}
 & \beta-\frac{(5p-9)}{2(p+3)}\alpha-\frac{2p}{p+3}\kappa \\
 & \qquad= \frac{3(5-p)}{2(p+3)}(\beta-1) + \frac{5p-9}{2(p+3)}(\beta-\alpha) - \frac{2p}{p+3}(\kappa-\kappa_0(p)) -\frac{(p-3)(5-p)}{2(p+1)(p+3)}.
\end{align*}
We can always choose a finite sequence of numbers $0=\alpha_0 < \alpha_1 < \cdots < \alpha_n = 1$ so that
\[
 -\gamma(j) \doteq \alpha_j-\frac{(5p-9)}{2(p+3)}\alpha_{j-1}-\frac{2p}{p+3}\kappa < -\frac{p+1}{p+3}(\kappa-\kappa_0(p)), \qquad j=1,2,\cdots, n. 
\]
These constants depend on $p,\kappa$ but nothing else. An application of Lemma \ref{piece estimate} gives
\begin{align*}
 & \left\|u\chi_{t^{\alpha_{j-1}}/4<|x|<t^{\alpha_j}/4}\right\|_{L^p L^{2p}([2^i t_0,2^{i+1}t_0]\times \Rm^3)}^p \\
 & \qquad \leq \|u\|_{L^p L^{2p} ([2^i t_0,2^{i+1}t_0]\times \{x\in \Rm^3: (2^i t_0)^{\alpha_{j-1}}/4 \leq |x| \leq (2^i t_0)^{\alpha_j}/2\})}^p \\
 &\qquad \lesssim_{p,\kappa} K^{\frac{2p}{p+3}} (2^i t_0)^{-\gamma(j)},
\end{align*}
for any $t_0\geq 1$, $j=1,2,\cdots,n$ and $i=0,1,2,\cdots$. Since $-\gamma(j)<-\frac{p+1}{p+3}(\kappa-\kappa_0(p))$, we can take a sum 
\[
 \left\|u\chi_{t^{\alpha_{j-1}}/4<|x|<t^{\alpha_j}/4}\right\|_{L^p L^{2p}([t_0,\infty)\times \Rm^3)}^p \lesssim_{p,\kappa} \sum_{i=0}^\infty  K^{\frac{2p}{p+3}} (2^i t_0)^{-\gamma(j)} \lesssim_{p,\kappa} K^{\frac{2p}{p+3}} t_0^{-\gamma(j)}.
\]
Finally we take a sum for $j=1,2,\cdots, n$ and obtain
\begin{align*}
 \left\|u\chi_{1/4<|x|<t/4}\right\|_{L^p L^{2p}([t_0,\infty)\times \Rm^3)}^p & \lesssim_{p,n} \sum_{j=1}^n \left\|u\chi_{t^{\alpha_{j-1}}/4<|x|<t^{\alpha_j}/4}\right\|_{L^p L^{2p}([t_0,\infty)\times\Rm^3)}^p\\
 & \lesssim_{p,\kappa} K^{\frac{2p}{p+3}} \sum_{j=1}^n t_0^{-\gamma(j)}.
\end{align*}
Let us recall $-\gamma(j) <-\frac{p+1}{p+3}(\kappa-\kappa_0(p))$ and the fact that the number $n$ depends on $p,\kappa$ only. Thus we have 
\begin{equation}
  \left\|u\chi_{1/4<|x|<t/4}\right\|_{L^p L^{2p}([t_0,\infty)\times\Rm^3)}^p \lesssim_{p,\kappa} K^{\frac{2p}{p+3}} t_0^{-\frac{p+1}{p+3}(\kappa-\kappa_0(p))}. \label{medium radius part}
\end{equation}
This deals with the medium radius part.

\begin{remark}
 A brief review of the proof shows that the medium radius part decays no slower than $t_0^{-\eta}$ as $t_0\rightarrow +\infty$ as long as 
 \[
  \eta < \eta_0(p,\kappa) = \frac{2p}{p+3}(\kappa-\kappa_0(p)) + \frac{(p-3)(5-p)}{2(p+1)(p+3)}.
 \]
 If $3<p<5$, this gives us a power-type decay of the medium radius part even if $\kappa$ is slightly smaller than $\kappa_0(p)$, as long as $\eta_0(p,\kappa)>0$.
\end{remark}

\paragraph{Small radius part} We start by considering a local Strichartz-type estimate specialized in dealing with small radius part of $u$.
\begin{lemma} \label{app local}
 Let $u$ be as in the main theorem. Then there exists a constant $T \in (0,1/4]$, which is determined by $p$,$\kappa$ and $K$ but nothing else, so that for any $t_0\geq 2$, we have
 \[
  \|u\|_{L^p L^{2p} ([t_0-T,t_0+T]\times B(0,1/4))} \lesssim_p E(t_0; 0,1)^{1/2}.
 \]
\end{lemma}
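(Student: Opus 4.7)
The plan is to reduce Lemma \ref{app local} to the small-data local theory of Lemma \ref{local theory} via a truncation of the Cauchy data at time $t_0$ and finite speed of propagation for the nonlinear wave equation. I fix a smooth radial cutoff $\phi\in C_c^\infty(\Rm^3)$ with $\phi\equiv 1$ on $B(0,1/2)$ and $\operatorname{supp}\phi\subseteq B(0,1)$, and consider the modified Cauchy data at time $t_0$
\[
 \tilde u_0 = \phi\,u(\cdot,t_0), \qquad \tilde u_1 = \phi\,u_t(\cdot,t_0).
\]
Let $\tilde u$ be the solution of (CP1) launched by $(\tilde u_0,\tilde u_1)$ at $t=t_0$.

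The quantitative heart of the argument is to show
\[
 \|(\tilde u_0,\tilde u_1)\|_{\dot H^1\times L^2} \lesssim E(t_0;0,1)^{1/2}.
\]
The bounds on $\|\tilde u_1\|_{L^2}$ and on $\|\phi\,\nabla u(\cdot,t_0)\|_{L^2}$ are immediate from the definition of $E(t_0;0,1)$. The product rule leaves the term $\|u(\cdot,t_0)\nabla\phi\|_{L^2}$, which is controlled by $\|u(\cdot,t_0)\|_{L^2(B(0,1))}$. Here the radial reduction $w(r,t_0)=r\,u(r,t_0)$ is essential: since $w(0,t_0)=0$, Cauchy--Schwarz yields $|w(r,t_0)|\leq r^{1/2}\|w_r(\cdot,t_0)\|_{L^2(0,r)}$, hence
\[
 \int_{B(0,1)} |u(x,t_0)|^2\,dx \;=\; 4\pi\int_0^1 |w(r,t_0)|^2\,dr \;\leq\; 2\pi\,\|w_r(\cdot,t_0)\|_{L^2(0,1)}^2 \;\lesssim\; E(t_0;0,1).
\]
This one-half-power gain, which is not visible at the level of the abstract energy norm, is the main obstacle and is the reason the estimate is phrased in terms of $E(t_0;0,1)^{1/2}$ rather than a smaller power.

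Next, since $\|(\tilde u_0,\tilde u_1)\|_{\dot H^1\times L^2}\leq C\,E(t_0;0,1)^{1/2}\leq CE^{1/2}$ is uniformly bounded by a constant depending only on the data of the main theorem, Lemma \ref{local theory} supplies some $T_0>0$ (depending on $p$ and the total energy, hence ultimately on $p,\kappa,E,K$) on which $\tilde u$ exists and satisfies
\[
 \|\tilde u\|_{L^{2p/(p-3)}L^{2p}([t_0-T_0,t_0+T_0]\times\Rm^3)} \lesssim_p \|(\tilde u_0,\tilde u_1)\|_{\dot H^1\times L^2} \lesssim E(t_0;0,1)^{1/2}.
\]
Set $T=\min(1/4,T_0)$. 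Because $3\leq p<5$ gives $2p/(p-3)\geq p$, Hölder in time on the bounded interval $[t_0-T,t_0+T]$ of length at most $1/2$ converts this Strichartz estimate into
\[
 \|\tilde u\|_{L^p L^{2p}([t_0-T,t_0+T]\times\Rm^3)} \lesssim_p E(t_0;0,1)^{1/2}.
\]

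Finally I invoke finite speed of propagation: $u$ and $\tilde u$ solve the same nonlinear wave equation, and by construction the differences $u-\tilde u$ and $\partial_t(u-\tilde u)$ vanish on $B(0,1/2)$ at time $t_0$. A standard energy identity on truncated backward/forward cones (using the defocusing sign) forces $u\equiv\tilde u$ throughout $\{(x,t): |x|+|t-t_0|<1/2\}$. Since $T\leq 1/4$, this cone contains $B(0,1/4)\times[t_0-T,t_0+T]$, and therefore
\[
 \|u\|_{L^p L^{2p}([t_0-T,t_0+T]\times B(0,1/4))} \;=\; \|\tilde u\|_{L^p L^{2p}([t_0-T,t_0+T]\times B(0,1/4))} \;\lesssim_p\; E(t_0;0,1)^{1/2},
\]
which is the claim.
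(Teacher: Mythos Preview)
Your proof follows essentially the same route as the paper: truncate the data at time $t_0$ by a smooth cutoff supported in $B(0,1)$, apply the local theory of Lemma~\ref{local theory} to the truncated solution, convert the $L^{2p/(p-3)}L^{2p}$ bound to $L^pL^{2p}$, and use finite speed of propagation to transfer the estimate back to $u$ on $B(0,1/4)$. Your handling of the cross term $\|u(\cdot,t_0)\nabla\phi\|_{L^2}$ via the radial Hardy-type bound $|w(r,t_0)|\leq r^{1/2}\|w_r\|_{L^2(0,r)}$ is exactly the content of the paper's ``straightforward calculation''.

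There is one small discrepancy with the lemma as stated. You bound $E(t_0;0,1)\leq E$ by the total energy, so the existence time $T_0$ you extract from Lemma~\ref{local theory} depends on $p$ and $E$; you then write that $T$ depends on $p,\kappa,E,K$. The lemma, however, asserts that $T$ is determined by $p,\kappa,K$ \emph{only}. To match this, you should instead invoke the uniform bound
\[
 E(t_0;0,1) \;\lesssim_{p,\kappa}\; K\,t_0^{-\kappa} \;\lesssim_{p,\kappa}\; K,\qquad t_0\geq 2,
\]
which follows from Corollary~\ref{weighted Morawetz} and Proposition~\ref{decay estimate outward} (this is precisely what the paper does at the start of its proof). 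With that replacement your argument yields $T=T(p,\kappa,K)$ and the lemma is proved exactly as stated.
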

\begin{proof}
 By Corollary \ref{weighted Morawetz} and Proposition \ref{decay estimate outward}, we have a uniform upper bound on the local energy
 \[
  E(t_0; 0,1) \lesssim_{p,\kappa} K t_0^{-\kappa} \lesssim_{p,\kappa} K, \quad t_0\geq 2.
 \]
 Let us consider the corresponding solution $v$ to (CP1) with the following initial data at time $t_0$
 \begin{align*}
  &v(x,t_0) = v_0(x) = u(x,t_0)\phi(x),& &v_t(x,t_0) = v_1(x) = u_t(x,t_0) \phi(x).& 
 \end{align*}
 Here $\phi(x)$ is a smooth, radial cut-off function satisfying
 \begin{align*}
  &0\leq \phi(x) \leq 1;& &\phi(x)=1, |x|<1/2;& &\phi(x) =0, |x|\geq 1.& 
 \end{align*}
 A straightforward calculation shows
 \begin{align*}
   \int_{\Rm^3} \left(|\nabla v_0|^2 + |v_1|^2\right) dx & \lesssim_1 \int_{B(0,1)} \left(|\nabla u(x,t_0)|^2 + |u_t(x,t_0)|^2 + |\nabla \phi(x)|^2 |u(x,t_0)|^2 \right) dx\\
   & \lesssim_1 E(t_0;0,1) \lesssim_{p,\kappa} K.
 \end{align*}
According to Lemma \ref{local theory}, there exists a constant $T=T(p,\kappa,K)$, so that 
\[
 \|v\|_{L^{\frac{2p}{p-3}} L^{2p} ([t_0-T,t_0+T]\times \Rm^3)} \lesssim_p \|(v_0,v_1)\|_{\dot{H}^1 \times L^2} \lesssim_p E(t_0;0,1)^{1/2}
\]
Here the time interval has been extended in both two time directions, because the wave equation is time-reversible. Without loss of generality, we can also assume $T\leq 1/4$. Now we observe 
\[
 v(x,t_0) = u(x,t_0),\;\; v_t(x,t_0) = u_t(x,t_0),\;\; |x|<1/2.
\] 
Since the wave equation has a finite speed of propagation, we immediately obtain ($T\leq 1/4$)
\[
 \|u\|_{L^{\frac{2p}{p-3}} L^{2p} ([t_0-T,t_0+T]\times B(0,1/4))} = \|v\|_{L^{\frac{2p}{p-3}} L^{2p} ([t_0-T,t_0+T]\times B(0,1/4))} \lesssim_p E(t_0;0,1)^{1/2}.
\]
Finally we are able to finish the proof by the embedding $L^{\frac{2p}{p-3}} \hookrightarrow L^{p}$.
\end{proof}
\noindent Now we are ready to give an upper bound on the small radius part. Let $T$ be the constant given above. According to Proposition \ref{Morawetz energy cylinder}, for $t_0\geq 2$ we have 
\[
 \int_{t_0}^\infty E(t;0,1) dt \lesssim_{p,\kappa} K t_0^{-\kappa}. 
\]
Next we observe
\[ 
 \int_{t_0}^{t_0+T} \left(\sum_{n=0}^\infty E(t+nT; 0,1)\right) dt = \int_{t_0}^\infty E(t;0,1) dt.
\]
It immediately follows that there exist $t' \in [t_0,t_0+T)$, so that 
\[
 \sum_{n=0}^\infty E(t'+nT; 0,1) \leq \frac{1}{T} \int_{t_0}^\infty E(t;0,1) dt \lesssim_{p,\kappa,K} t_0^{-\kappa}.
\]
Now we can apply Lemma \ref{app local}, keep the fact $E(t;0,1)\lesssim_{p,\kappa,K} 1$ in mind  and calculate
\begin{align*}
 \|u\|_{L^p L^{2p}([t_0,\infty)\times B(0,1/4))}^p &= \sum_{n=0}^\infty \|u\|_{L^p L^{2p}([t_0+nT,t_0+nT+T)\times B(0,1/4))}^p\\
 & \leq \sum_{n=0}^\infty \|u\|_{L^p L^{2p}([t'+nT-T,t'+nT+T]\times B(0,1/4))}^p\\
 & \lesssim_p \sum_{n=0}^\infty E(t'+nT;0,1)^{p/2}\\
 & \lesssim_{p,\kappa,K} \sum_{n=0}^\infty E(t'+nT;0,1)\\
 & \lesssim_{p,\kappa,K} t_0^{-\kappa}.
\end{align*}
This is equivalent to 
\begin{equation}
 \|u\chi_{|x|<1/4}\|_{L^p L^{2p}([t_0,\infty)\times \Rm^3)}^p \lesssim_{p,\kappa,K} t_0^{-\kappa}, \label{small radius part}
\end{equation}
thus finishes the proof of small radius part.  

\begin{remark}
 The norm of small radius part always decays like $t_0^{-\kappa}$. In summary, when $\kappa<\kappa_0(p)$, the major obstacle to proof of scattering results is insufficient decay rate of $u$ around the light cone $|x|=t$. 
\end{remark}

\paragraph{Convergence rate of the scattering} Let us collect the upper bounds in \eqref{large radius part}, \eqref{medium radius part}, \eqref{small radius part} and conclude
\[
 \left\|-|u|^{p-1}u\right\|_{L^1 L^2([t_0,\infty)\times \Rm^3)} = \|u\|_{L^p L^{2p} ([t_0,\infty)\times \Rm^3)}^p \lesssim t_0^{-\frac{p+1}{p+3}(\kappa-\kappa_0(p))}.
\]
Next we observe ($t'>t$, $(v_0,v_1)$ is defined in part (a) of main theorem)
\begin{align*}
 & \left\|\begin{pmatrix} u(\cdot,t)\\ u_t(\cdot,t)\end{pmatrix} - \mathbf{S}_L(t) \begin{pmatrix} v_0\\ v_1\end{pmatrix} \right\|_{\dot{H}^1\times L^2} \\
  = &\left\|\mathbf{S}_L(t'-t) \begin{pmatrix} u(\cdot,t)\\ u_t(\cdot,t)\end{pmatrix} - \mathbf{S}_L(t') \begin{pmatrix} v_0\\ v_1\end{pmatrix} \right\|_{\dot{H}^1\times L^2}\\
  \leq &\left\|\begin{pmatrix} u(\cdot,t')\\ u_t(\cdot,t')\end{pmatrix} - \mathbf{S}_L(t') \begin{pmatrix} v_0\\ v_1\end{pmatrix} \right\|_{\dot{H}^1\times L^2}
 +\left\|\begin{pmatrix} u(\cdot,t')\\ u_t(\cdot,t')\end{pmatrix} - \mathbf{S}_L(t'-t) \begin{pmatrix} u(\cdot,t)\\ u_t(\cdot,t)\end{pmatrix} \right\|_{\dot{H}^1\times L^2}\\
  \leq &\left\|\begin{pmatrix} u(\cdot,t')\\ u_t(\cdot,t')\end{pmatrix} - \mathbf{S}_L(t') \begin{pmatrix} v_0\\ v_1\end{pmatrix} \right\|_{\dot{H}^1\times L^2} 
 +C \left\|-|u|^{p-1}u\right\|_{L^1 L^2([t,t']\times \Rm^3)}.
\end{align*}
Here we apply the classic Strichartz estimate. Finally we make $t'\rightarrow +\infty$, discard the first term in the right hand side of inequality by scattering and finally obtain
\[
 \left\|\begin{pmatrix} u(\cdot,t)\\ u_t(\cdot,t)\end{pmatrix} - \mathbf{S}_L(t) \begin{pmatrix} v_0\\ v_1\end{pmatrix} \right\|_{\dot{H}^1\times L^2} \lesssim t^{-\frac{p+1}{p+3}(\kappa-\kappa_0(p))}.
\]

\section{Appendix} 
In this final appendix we give an explicit example of scattering solutions which is not covered by the scattering theory in the critical Sobolev space. We start by 
\begin{lemma} \label{estimate of nonlinear Cbeta}
Let $\beta = 1 - \frac{2}{p-1}$. Given any $(r',t') \in \Rm^+ \times \Rm^+$ with $r'>t'$, we define $\Omega(r',t') = \{(r,t): r+t<t'+r', t-r<t'-r',t>0\}$ to be a triangle region. If the inequality $|w(r,t)| \leq A r^\beta$ holds for all points $(r,t)$ in $\Omega(r',t')$, then we have
\[
   I = \int_{\Omega(r',t')} \frac{|w(r,t)|^p}{r^{p-1}} drdt \lesssim_p A^p (r')^\beta.
\]
\end{lemma}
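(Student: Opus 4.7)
The plan is to turn the claim into a purely geometric integral by inserting the pointwise hypothesis at the outset. Setting $\alpha = (p+1)/(p-1) > 1$, the arithmetic identity $p\beta - (p-1) = -\alpha$ combined with $|w(r,t)| \leq A r^\beta$ on $\Omega(r',t')$ gives the pointwise bound $|w|^p/r^{p-1} \leq A^p\, r^{-\alpha}$, and since $2 - \alpha = (p-3)/(p-1) = \beta$, the lemma reduces to the geometric estimate
\[
\int_{\Omega(r',t')} r^{-\alpha}\, dr\, dt \;\lesssim_p\; (r')^{\beta}.
\]

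To evaluate this, I would set $a = r' - t'$ and slice the triangle horizontally: the slice at height $t \in (0,t')$ is $r \in (a+t,\, r'+t'-t)$, so the inner integral is
\[
\int_{a+t}^{r'+t'-t} r^{-\alpha}\, dr = \frac{1}{\alpha-1}\bigl[(a+t)^{1-\alpha} - (r'+t'-t)^{1-\alpha}\bigr].
\]
A further integration in $t$ antidifferentiates each endpoint to $u^{2-\alpha}/(2-\alpha)$, evaluated at $u \in \{a,\, r',\, r'+t'\}$. Using $a+t' = r'$, $a \leq r'$, $r'+t' \leq 2r'$ and $2-\alpha > 0$, every such term is dominated by a constant multiple of $(r')^{2-\alpha} = (r')^\beta$, which gives the required inequality with an implicit constant of order $1/[(\alpha-1)(2-\alpha)]$, depending only on $p$.

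The main subtlety I anticipate is the degenerate endpoint $p = 3$, where $2-\alpha = 0$ and the antiderivatives above degenerate to logarithms; the cleanest way to treat this case uniformly is to switch to characteristic coordinates $(s,\tau) = (r+t,\, t-r)$, in which $\Omega$ becomes axis-aligned and the singular factor $(s-\tau)^{-\alpha}$ can be isolated by a dyadic decomposition near the edge $r=0$, with the logarithmic factor absorbed into the $p$-dependent constant. Beyond this borderline bookkeeping the argument is pure calculus and I do not foresee further obstacles.
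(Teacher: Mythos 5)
Your reduction and your computation for $3<p<5$ are correct and are essentially the paper's argument: both proofs simply insert $|w|\leq Ar^\beta$ and evaluate $\int_{\Omega(r',t')} r^{\beta-2}\,dr\,dt$ (note $p\beta-(p-1)=\beta-2=-\alpha$). The only difference is mechanical. The paper enlarges $\Omega(r',t')$ to the half-infinite strip $\{(r,t):t'-r'<r+t<r'+t',\ t-r<t'-r'\}$ and integrates in null coordinates, first in $\tau=t-r$ (using $\beta-2<-1$) and then in $s=r+t$; you integrate exactly over the triangle by horizontal slices. Both routes give $(r')^{2-\alpha}=(r')^{\beta}$ with a constant of order $1/[(\alpha-1)(2-\alpha)]$, so for $p>3$ your argument is complete, and in fact slightly cleaner since you never leave the original region.

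The genuine gap is your final paragraph on $p=3$. There $\alpha=2$, $\beta=0$, and the exact value of your sliced integral is $\log\frac{r'}{r'-t'}-\log\frac{r'+t'}{r'}$, which is unbounded as $t'\uparrow r'$. This is not a harmless logarithm that can be ``absorbed into the $p$-dependent constant'': it depends on $(r',t')$, whereas the implicit constant in the lemma may depend only on $p$. In fact the reduced geometric inequality $\int_{\Omega(r',t')}r^{-2}\,dr\,dt\lesssim 1$ is simply false, since the region only satisfies $r\geq r'-t'$, and the hypothesis $|w|\leq A$ allows $w$ to saturate the bound on all of $\Omega(r',t')$; no dyadic decomposition near $r=0$ will rescue this, and the issue is not at $r=0$ anyway but at the corner $r=r'-t'$. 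To be fair, the paper's own proof has the identical defect: its final step integrates $(s-t'+r')^{\beta-1}$ over $s\in(t'-r',r'+t')$, which diverges when $\beta=0$. So the failure at $p=3$ is really a defect of the statement at the endpoint rather than of your method, but your claim that it is mere ``borderline bookkeeping'' is not correct; you should either restrict to $3<p<5$ or add a hypothesis bounding $r'-t'$ from below.
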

\begin{proof}
 First of all, we plug in the upper bound of $|w(r,t)|$ and obtain
 \[
  I  \leq \int_{\Omega(r',t')} A^p r^{\beta-2} drdt.
 \]
 We enlarge the region of integral to $\Omega'(r',t') = \{(r,t): t'-r'<r+t<r'+t', t-r<t'-r'\}$, as shown in figure \ref{figure triangle integral} and apply the change of variables $r=\frac{s-\tau}{2}$, $t = \frac{s+\tau}{2}$. The region $\Omega'(r',t')$ corresponds to the region $\{(s,\tau): t'-r'<s<r'+t', \tau<t'-r'\}$ in the $s$-$\tau$ space. The conclusion then follows a straight-forward calculation
\begin{align*}
 I & \leq \int_{\Omega'(r',t')} A^p r^{\beta-2} drdt = \frac{1}{2} \int_{t'-r'}^{r'+t'} \left(\int_{-\infty}^{t'-r'} A^p \left(\frac{s-\tau}{2}\right)^{\beta-2} d\tau \right) ds\\
 & \lesssim_p A^p \int_{t'-r'}^{r'+t'} (s-t'+r')^{\beta-1} ds \lesssim_p A^p (r')^\beta.
\end{align*}

 \begin{figure}[h]
 \centering
 \includegraphics[scale=1.1]{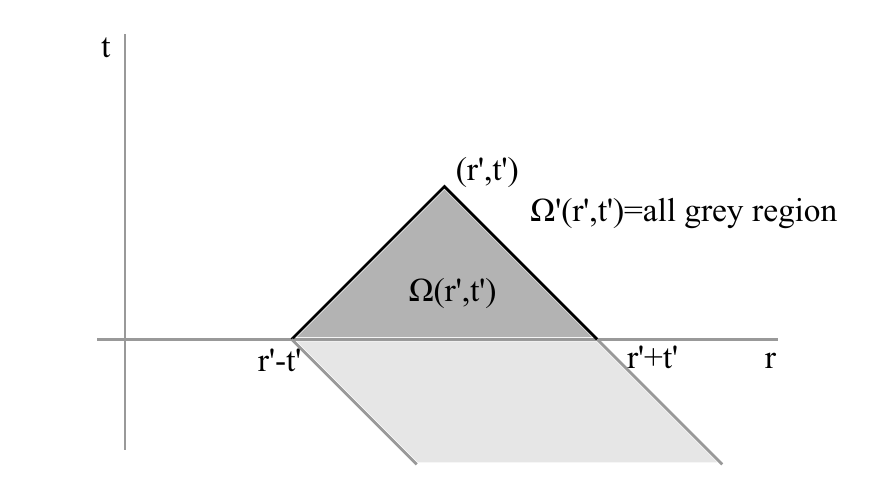}
 \caption{Illustration of proof for Lemma \ref{estimate of nonlinear Cbeta}} \label{figure triangle integral}
\end{figure}

\end{proof}
\noindent This helps us prove the following 
\begin{proposition} \label{pointwise decay in Cbeta}
 Let $w$ be a solution to the one-dimensional wave equation 
 \[
  \left\{\begin{array}{ll}\partial_t^2 w - \partial_r^2 w = - \frac{|w|^{p-1}w}{r^{p-1}}, & (r,t) \in \Rm^+ \times \Rm;\\ \displaystyle \lim_{r\rightarrow 0^+} w(r,t) = 0; & \\ (w,w_t)|_{t=0} = (w_0,w_1); &  \end{array} \right.
 \]
 with $(w_r(\cdot,t),w_t(\cdot,t)) \in C(\Rm; L^2(\Rm^+) \times L^2(\Rm^+))$. In addition, the initial data $(w_0,w_1)$ satisfy
 \begin{align*}
   &|w_0(r)| \leq c r^\beta, \quad r\geq R;&
   &\left|\int_{r_1}^{r_2} w_1(s) ds \right| \leq c (r_2-r_1)^{\beta},\quad R\leq r_1<r_2.&
 \end{align*}
 If $c<c(p)$ is sufficiently small, then the inequality $w(r,t) < 3c r^{\beta}$ holds for all $r\geq |t|+R$.
\end{proposition}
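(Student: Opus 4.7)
The plan is to run a bootstrap argument on d'Alembert's formula for the inhomogeneous 1D wave equation, with the nonlinear Duhamel integral controlled by Lemma \ref{estimate of nonlinear Cbeta}. By time-reversal symmetry it suffices to consider $t \geq 0$. The key geometric observation is that for any $(r,t)$ with $t \geq 0$ and $r \geq t+R$, every point $(r',t')$ in the backward characteristic triangle
\[
 \triangle(r,t) = \{(r',t') : 0 \leq t' \leq t,\ r-t+t' \leq r' \leq r+t-t'\}
\]
satisfies $r' \geq r-t+t' \geq R+t'$, so $\triangle(r,t)$ sits strictly in the right half-plane $\{r'>0\}$ and inside the region $\{r' \geq t'+R\}$ where our target estimate lives. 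Consequently the Dirichlet condition $\lim_{r\to 0^+} w = 0$ plays no role, and
\[
 w(r,t) = \tfrac12\bigl(w_0(r+t)+w_0(r-t)\bigr) + \tfrac12\int_{r-t}^{r+t}\!w_1(s)\,ds - \tfrac12\iint_{\triangle(r,t)} \frac{|w|^{p-1}w}{(r')^{p-1}}\,dr'\,dt'.
\]

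Since $r \pm t \geq R$, the hypotheses on $(w_0,w_1)$ apply directly. Using the concavity of $x \mapsto x^\beta$ on $(0,\infty)$ (valid since $0 \leq \beta < 1$), which gives $(r+t)^\beta+(r-t)^\beta \leq 2 r^\beta$, together with $t \leq r$ and $2^{\beta-1} \leq 1$, I would bound the linear contribution by
\[
 \tfrac{c}{2}\bigl((r+t)^\beta+(r-t)^\beta\bigr) + \tfrac{c}{2}(2t)^\beta \leq c r^\beta + c t^\beta \leq 2c r^\beta.
\]
For the nonlinear contribution I would install the bootstrap ansatz $|w(r',t')| \leq 3c (r')^\beta$ on $\triangle(r,t)$; the geometric remark above guarantees this ansatz is meaningful (every $(r',t') \in \triangle(r,t)$ lies in the set where we ultimately want to prove the bound). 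Applying Lemma \ref{estimate of nonlinear Cbeta} with $A = 3c$ yields
\[
 \tfrac12 \iint_{\triangle(r,t)} \frac{|w|^p}{(r')^{p-1}}\,dr'\,dt' \leq C_p(3c)^p r^\beta,
\]
and combining with the linear estimate gives $|w(r,t)| \leq (2 + C_p 3^p c^{p-1})\,c\,r^\beta$. Choosing $c(p)$ small enough that $C_p 3^p c^{p-1} < 1$ whenever $c < c(p)$, the right-hand side is $\leq \alpha c r^\beta$ with $\alpha < 3$, strictly improving the ansatz.

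To close, define $T^* = \sup\{T \geq 0 : |w(r,t)| \leq 3c r^\beta \text{ for all } 0 \leq t \leq T,\ r \geq t+R\}$. The initial data hypothesis gives a strict inequality at $t=0$, and $w$ is continuous on $\{r \geq 0\} \times \Rm$ (via $w(r,t) = \int_0^r w_r(s,t)\,ds$ and the $L^2$-continuity of $(w_r,w_t)$ in $t$), so $T^* > 0$. If $T^* < \infty$, the argument above, applied at any $(r,T^*)$ with $r \geq T^*+R$, gives $|w(r,T^*)| \leq \alpha c r^\beta$ uniformly in $r$, contradicting the maximality of $T^*$ by short-time continuity. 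The principal technical obstacle is that the half-strip $\{0\leq t\leq T,\ r\geq t+R\}$ is unbounded in $r$, so $\sup_r |w(r,t)|/r^\beta$ is not automatically finite or continuous in $t$; this is resolved either by noting that the improvement from $3c$ to $\alpha c$ is uniform in $r$ across the entire half-strip so a small time increment past $T^*$ still preserves the bound, or more cleanly by solving the Duhamel equation $w = L + N(w)$ by contraction mapping in the Banach space $\{\tilde w \in C(\Omega_\infty) : \sup |\tilde w|/r^\beta < \infty\}$ (where $\|L\| \leq 2c$ and $N$ is super-linear with constant $C_p/2$) and identifying the fixed point with $w$ via uniqueness for the wave equation.
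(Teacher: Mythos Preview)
Your proof is correct and follows essentially the same strategy as the paper: d'Alembert's formula, the linear piece bounded by $2c r^\beta$, the nonlinear piece handled by Lemma~\ref{estimate of nonlinear Cbeta} under the bootstrap hypothesis $|w| \leq 3c r^\beta$, and then a continuity argument to close. One organizational difference is worth noting: the paper runs the bootstrap in the characteristic coordinate $s = r+t$ rather than in $t$. Concretely, it sets $S_+ = \{(r,t): t\geq 0,\ r\geq t+R,\ |w(r,t)| \geq 3c r^\beta\}$, observes this set is closed, and if nonempty picks $(r_0,t_0)\in S_+$ minimizing $r_0+t_0$; since the slice $\{r+t = \text{const}\}\cap\{t\geq 0,\ r\geq t+R\}$ is compact, this minimum is attained, and the entire backward triangle $\Omega(r_0,t_0)$ lies in $\{r+t < r_0+t_0\}$, hence outside $S_+$. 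This cleanly avoids the unbounded-in-$r$ issue you flag with the $T^*$ formulation, at no extra cost. Your two proposed fixes (uniform improvement in $r$, or contraction in a weighted sup-norm) are both valid, but the paper's choice of bootstrap parameter is the simplest resolution.
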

\begin{proof}
 Since the wave equation is time-reversible, we only need to prove the inequality in the positive time direction $t\geq 0$. Let us consider the set
 \[
  S_+ = \{(r,t): t\geq 0, r\geq t+R, |w(r,t)| \geq 3c r^{\beta}\}.
 \]
 Our goal is to show this set $S_+$ is empty. Since $(w_r(\cdot,t),w_t(\cdot,t)) \in C(\Rm; L^2 \times L^2)$ and $w(0^+,t)=0$, it immediately follows that $w$ is a continuous function of $r,t$. Thus the set $S_+$ is closed. If $S_+$ were nonempty, there would exist $(r_0,t_0) \in S_+$ so that $(r,t) \notin S_+$ for all $(r,t)$ with $r+t<r_0+t_0$. Since we have already known $|w(r,0)|=|w_0(r)| \leq c r^\beta$ when $t=0$, the time $t_0$ must be positive. By d'Alembert formula, we have 
\[
 w(r_0,t_0) = \frac{1}{2}[w_0(r_0-t_0)+w_0(r_0+t_0)]+\frac{1}{2}\int_{r_0-t_0}^{r_0+t_0} w_1(r) dr - \frac{1}{2} \int_{\Omega(r_0,t_0)} \frac{|w|^{p-1}w(r,t)}{r^{p-1}} drdt.
\]
Here the region $\Omega(r_0,t_0)$ is defined in the same way as in the previous lemma. All interior points $(r,t)$ of $\Omega(r_0,t_0)$ satisfy $r+t<r_0+t_0$. This means that none of these points is contained in the set $S_+$. In other words, All interior points $(r,t)$ satisfy $|w(r,t)| < 3c r^\beta$. As a result, we are able to apply Lemma \ref{estimate of nonlinear Cbeta} on the double integral above. We can also plug in the upper bound of initial data at the same time and obtain
\[
 |w(r_0,t_0)|  \leq \frac{1}{2}\left[c(r_0-t_0)^\beta + c(r_0+t_0)^\beta + c(2t_0)^\beta\right]+ C_p (3c)^p r_0^\beta \leq 2c r_0^\beta + 3^p C_p c^p r_0^\beta
\]
Since we assumed $|w(r_0,t_0)| \geq 3c r_0^\beta$, we have 
\[
 3c r_0^\beta \leq 2c t_0^\beta + 3^p C_p c^p r_0^\beta \; \Rightarrow \; c \leq 3^p C_p c^p.
\]
This gives a contradiction if $c<c(p)$ is sufficiently small.
\end{proof}
\paragraph{An explicit example} Now let us choose radial initial data of (CP1) in the following way
\begin{align*}
 &u_0 \in C^\infty(\Rm^3),\quad u_0(x) = c|x|^{-\frac{2}{p-1}}, \; |x|\geq 1;& &u_1 = 0.&
\end{align*}
Here $c$ is a small constant. A basic calculation shows that the energy of $(u_0,u_1)$ decays like 
\[
 \frac{1}{2}|\nabla u_0(x)|^2 + \frac{1}{2}|u_1(x)|^2 + \frac{1}{p+1}|u_0(x)|^{p+1} \lesssim |x|^{-2-\frac{4}{p-1}}, \quad |x|\gg 1.
\]
Thus for a positive constant $\kappa <\frac{5-p}{p-1}$ we have
\[
 \int_{\Rm^3} (1+|x|^\kappa) \left(\frac{1}{2}|\nabla u_0(x)|^2 + \frac{1}{2}|u_1(x)|^2 + \frac{1}{p+1}|u_0(x)|^{p+1}\right) dx < \infty.
\]
We may choose $\kappa \in (\frac{5-p}{p+1}, \frac{5-p}{p-1})$, apply Theorem \ref{scattering with full energy decay} and conclude that the corresponding solution $u$ to (CP1) with initial data $(u_0,u_1)$ scatters with $\|u\|_{L^p L^{2p}(\Rm^+\times \Rm^3)}<+\infty$. On the other hand, Proposition \ref{pointwise decay in Cbeta} gives an estimate on the function $w = ru$
\begin{equation} \label{w upper bound app}
 |w(r,t)| \leq 3c r^\beta, \qquad r\geq 1+t.
\end{equation}
For all $(r,t)$ with $r\geq 1+t$ we may apply d'Alembert formula and obtain
\begin{align*}
 w(r,t) & = \frac{1}{2}[w(r-t,0)+w(r+t,0)] + 0 + \frac{1}{2} \int_{\Omega(r,t)} \frac{-|w|^{p-1}w(r',t')}{(r')^{p-1}} dr' dt'\\
 & = \frac{1}{2}c[(r-t)^\beta+(r+t)^\beta] + \frac{1}{2} \int_{\Omega(r,t)} \frac{-|w|^{p-1}w(r',t')}{(r')^{p-1}} dr' dt'.
\end{align*}
By Lemma \ref{estimate of nonlinear Cbeta} and our estimate \eqref{w upper bound app}, we have 
\[
 \left|\int_{\Omega(r,t)} \frac{-|w|^{p-1}w(r',t')}{(r')^{p-1}} dr' dt' \right| \lesssim_p c^p r^\beta.
\]
Therefore if $c$ is sufficiently small, we always have ($r>1+t$)
\[
 w(r,t) \simeq c r^{\beta} \quad \Rightarrow \quad u(r,t) \simeq c r^{-\frac{2}{p-1}}.
\]
A simple calculation then shows
\[
 \int_0^\infty \int_{|x|>1+t} |u(x,t)|^{2(p-1)} dx dt = +\infty\quad \Rightarrow \quad \|u\|_{L^{2(p-1)}L^{2(p-1)}(\Rm^+ \times \Rm^3)} = + \infty.
\] 

\end{document}